\newtheorem{theorem}{Theorem}[section]
\newtheorem{lemma}[theorem]{Lemma}
\theoremstyle{definition}
\newtheorem{definition}[theorem]{Definition}
\theoremstyle{plain}
\newtheorem{question}{Question}[section]
\newtheorem{hypothesis}{Hypothesis}[section]
\newtheorem{remark}{Remark}[section]
\newtheorem{problem}{Problem}[section]
\numberwithin{equation}{section}
\begin{document}

\title[existence of explicit symplectic
integrators]{The existence of explicit symplectic
integrators for general nonseparable Hamiltonian systems}


\author{Lijie Mei}
\address{School of Mathematics \& Yunnan Key Laboratory of Modern Analytical
Mathematics and Applications, Yunnan Normal University, Kunming 650500, China}
\email{bxhanm@126.com}

\author{Xinyuan Wu}
\address{Department of Mathematics, Nanjing University, Nanjing 210093, China}
\email{xywu@nju.edu.cn}

\author{Yaolin Jiang}
\address{School of Mathematics and Statistics, Xi'an Jiaotong
University, Xi'an 710049, China}
\email{yljiang@mail.xjtu.edu.cn}

\thanks{This work was supported by the National Natural Science
Foundation of China (grant No. 12163003 and 12271426) and the
Yunnan Fundamental Research Projects (grant No. 202401CF070033).}


\subjclass[2020]{Primary 65P10; Secondary 37M15}

\keywords{explicit symplectic integrators, nonseparable Hamiltonian systems,
linear error growth}

\date{}


\begin{abstract}
The existence of explicit symplectic integrators for general
nonseparable Hamiltonian systems is an open and important problem in
both numerical analysis and computing in science and engineering, as
explicit integrators are usually more efficient than the implicit
integrators of the same order of accuracy. Up to now,
all responses to this problem are negative. That is, there exist
explicit symplectic integrators only for some special nonseparable
Hamiltonian systems, whereas the universal design involving explicit
symplectic integrators for general nonseparable Hamiltonian systems
has not yet been studied sufficiently. In this paper, we present a
constructive proof for the existence of explicit symplectic
integrators for general nonseparable Hamiltonian systems via finding
explicit symplectic mappings under which the special submanifold of
the extended phase space is invariant. It turns out that the
proposed explicit integrators are symplectic in both
the extended phase space and the original phase space. Moreover, on
the basis of the global modified Hamiltonians of the proposed
integrators, the backward error analysis is made via a parameter
relaxation and restriction technique to show the linear growth of
global errors and the near-preservation of first integrals. In
particular, the effective estimated time interval is nearly the same
as classical implicit symplectic integrators when applied to (near-)
integrable Hamiltonian systems. Numerical experiments with a
completely integrable nonseparable Hamiltonian and a nonintegrable
nonseparable Hamiltonian illustrate the good long-term behavior and
high efficiency of the explicit symplectic integrators proposed
and analyzed in this paper.
\end{abstract}

\maketitle

\section{Introduction}\label{sec:intro}
 The Hamiltonian system is one of the most important
dynamical systems and plays the major role in the development of the
theory of classical dynamical systems
\cite{MacKay1987,Giorgilli2022}. As a matter of fact, as
famously claimed by Schr\"{o}dinger \cite{Schrodinger1944}, ``The
Hamiltonian principle has become the cornerstone of modern physics
$\ldots$ If you wish to apply modern theory to any particular
problem, you must start putting the problem in the Hamiltonian
form''. Besides, great theoretical progress such as the
Kolmogorov--Arnold--Moser theory has been made for Hamiltonian
systems, and numerical algorithms specially designed to
solve  Hamiltonian problems  in ``the Hamiltonian way'' were created
\cite{Feng1991}, which constitutes another important progress for
Hamiltonian systems \cite{Sanz-Serna1994a,Sanz-Serna1994,Shang1999,
Hairer2006,Feng2010,Gauckler2018}.

It has become a common view in the field of numerical analysis and
scientific computing that preserving the symplectic geometric
structure probably brings some advantages such as high accuracy and
long-term reliability in the numerical simulation of Hamiltonian
systems. A systematic study of symplectic geometric algorithms for
Hamiltonian problems appeared in 1980s, and was pioneered by Feng
Kang. It was believed that geometric notion of symplecticness is
essential to design geometric integrators for the long-time behavor
of the solutions.

Since explicit methods can be implemented at low cost,
the design and analysis of explicit symplectic methods for
Hamiltonian systems have received a great deal of attention in the
last few decades. Consequently, this paper is concerned with
explicit symplectic geometric integrators for solving
 the $2d$-dimensional canonical Hamiltonian system
\begin{equation}\label{Hamilton}
\begin{aligned}
\frac{\mathrm{d}p}{\mathrm{d}t}=-\frac{\partial H}{\partial q},
\qquad
\frac{\mathrm{d}q}{\mathrm{d}t}=+\frac{\partial H}{\partial p},
\end{aligned}
\end{equation}
where $p=(p^1,\ldots,p^d)^\intercal\in \mathbb{R}^{d}$ and
$q=(q^1,\ldots,q^d)^\intercal\in\mathbb{R}^{d}$ are respectively
generalized momentums and positions conjugate to each other,
$\frac{\partial}{\partial p}$ and $\frac{\partial}{\partial q}$
denote the partial derivatives respectively with respect to $p$ and
$q$, and $H(p,q)$ is the Hamiltonian function. The Hamiltonian
system is of $d$ degrees of freedom.

Let $T^*\mathbb{R}^{d}$ denote the cotangent bundle of
$\mathbb{R}^{d}$. If $T^*\mathbb{R}^{d}$ is equipped with
the standard symplectic structure
\begin{equation}\label{symp-stru}
\omega_{2d}:=\mathrm{d}p\wedge\mathrm{d}q=
\sum_{i=1}^{d}\mathrm{d}p^i\wedge\mathrm{d}q^i,
\end{equation}
then $(T^*\mathbb{R}^{d},\omega_{2d})$ becomes a $2d$-dimensional
symplectic manifold \cite{Arnold1989}. Let
$$J_{2d}=\left(
          \begin{array}{cc}
            O & I_d \\
            -I_d & O \\
          \end{array}
        \right)\in \mathbb{R}^{2d\times 2d}
$$
be the standard skew-symmetric matrix, where $I_d$ is
the $d\times d$ identity matrix. A mapping $\psi:\
T^*\mathbb{R}^{d}\rightarrow T^*\mathbb{R}^{d}$ is called symplectic
if and only if it holds that
\begin{equation}\label{symplectic-cert1}
\mathrm{d}p\wedge\mathrm{d}q=\mathrm{d}\tilde{p}\wedge
\mathrm{d}\tilde{q}
\end{equation}
or equivalently
\begin{equation}\label{symplectic-cert2}
\Big(\dfrac{\partial(\tilde{p},\tilde{q})}{\partial(p,q)}\Big)
^\intercal J_{2d}\Big(\dfrac{\partial(\tilde{p},\tilde{q})}
{\partial(p,q)}\Big)=J_{2d}
\end{equation}
for arbitrary $q\in\mathbb{R}^{d}$ and $p\in T^*\mathbb{R}^{d}$,
where $(\tilde{p},\tilde{q})=\psi(p,q)$ and
$\dfrac{\partial(\tilde{p},\tilde{q})}{\partial(p,q)}$ is the
Jacobian matrix of $(\tilde{p},\tilde{q})$ with respect to $(p,q)$.
Thus, a matrix $M\in\mathbb{R}^{2d}$ is called symplectic if
the equality $M^\intercal J_{2d}M=J_{2d}$ holds.

Suppose that the Hamiltonian system \eqref{Hamilton} is solved
on the interval $[0,T_{end}]$ and prescribed with the initial
conditions
\begin{equation}\label{initial-cod}
p(0)=p_0, \quad q(0)=q_0.
\end{equation}
Let $\phi_t:~(p_0,q_0)\mapsto\big(p(t),q(t)\big)$ be the phase flow
of the Hamiltonian system \eqref{Hamilton}, then $\phi_t$ is
symplectic for each fixed $t$. A numerical method applied to
\eqref{Hamilton} with the constant stepsize $h$ can be viewed as
a discrete flow
\begin{equation*}\label{discrete-flow}
\Psi_h:~(p_n,q_n)\mapsto(p_{n+1},q_{n+1}),
\end{equation*}
where $(p_n,q_n)$ are numerical solutions obtained by the numerical
method at time $t_n=nh~$ for $n=1,\ldots,N_T$, and $N_T=T_{end}/h$.
The numerical method is called symplectic if its discrete flow
$\Psi_h$ is also symplectic.

For solving the Hamiltonian system \eqref{Hamilton}, many classical
numerical methods, such as the Euler method, the
Runge--Kutta (RK) method, and the multistep method,
could be applied. However, as realized by Feng Kang \cite{Feng2010},
``different representations for the same physical law can lead to
different computational techniques in solving the same problem,
which can produce different numerical results''. Since the
Hamiltonian system admits the symplectic structure
\eqref{symp-stru}, a natural idea is constructing numerical
integrators for Hamiltonian systems in the framework of symplectic
geometry, which builds the concept of symplectic (geometric)
algorithms and opens the research area of structure-preserving or
geometric integrators
\cite{Sanz-Serna1994,Hairer2006,Feng2010,Qin2011,Gauckler2018,Iserles2018}.
Up to now, symplectic methods have been developed and adopted in
many fields \cite{Antonana2022,
Aubry1998,David2023,Jin2020,Leimkuhler1994,
Li2020,Libert2011,Mei2017,Pan2021,Sharma2020,Zhu2020,Xiong2020,Wu2003,Wolski2018}.

As claimed in \cite{Hairer2006}, pioneering work on symplectic
integration is due to De Vogelaere \cite{Vogelaere1956} (see also
\cite{Skeel2020}), Ruth \cite{Ruth1983}, and Feng Kang
\cite{Feng1985}. The symplectic Euler method derived by De Vogelaere
\cite{Vogelaere1956} is considered to be the first symplectic
integrator, which is explicit for separable Hamiltonian systems but
implicit for nonseparable Hamiltonian systems. Up to 1983, besides
verifying the result by De Vogelaere that the leap-frog method is
symplectic,  Ruth \cite{Ruth1983} reported a third-order explicit
symplectic integrator for the separable Hamiltonian
$H(p,q)=\frac{1}{2}p^2 + V(q)$. Thereafter, much effort
spent on the explicit symplectic integrators has been made, based on
operator splitting for separable Hamiltonian systems.  Forest \&
Ruth \cite{Forest1990} proposed the fourth-order symplectic
integrator for the  separable Hamiltonian $H(p,q)=T(p) + V(q)$.
Yoshida \cite{Yoshida1990} proposed high-order symplectic
integrators based on the symmetric composition. According to the
particular formulation $H(p,q)=H_0(p,q)+\varepsilon H_1(q)$, the
integrability of $H_0(p,q)$, and the small magnitude of
$\varepsilon$, Wisdom \& Holman \cite{Wisdom1991} proposed another
type of symplectic integrators for the N-body problem of the Solar
System. Moreover, Farr\'{e}s et al. \cite{Farres2013} showed that
pseudo-high-order symplectic methods with fewer stages could be
obtained with the presence of the small parameter $\varepsilon$ for
the Solar System. Omelyan et al. \cite{Omelyan2002} studied the
construction of high-order force-gradient symplectic methods that
could only use positive stepsizes. Recently, Wu et al. \cite{Wu2022}
proposed explicit symplectic integrators based on the multiple
integrable decomposition for the Hamiltonian considered in
Schwarzschild and Kerr-type black hole spacetimes.

For general (separable or nonseparable) Hamiltonian systems, the
generating function method, which was proposed and systematically
developed by Feng Kang \cite{Feng2010}, is one of the main
approaches to the construction of symplectic integrators. It can be
proved that for separable Hamiltonian systems, the explicit
symplectic integrator derived from the generating
function method is identical to that constructed based on operator
splitting. However, for nonseparable Hamiltonian systems, the
symplectic integrators derived by the generating function method are
all implicit. In addition, although the B-series theory
\cite{Chartier2010} is used for the derivation, the higher-order
improved symplectic integrators based on modified differential
equations proposed by Chartier et al. \cite{Chartier2010} could also
be categorized into the generating function method.

In addition to the generating function method, Lasagni
\cite{Lasagni1998}, Sanz-Serna \cite{Sanz-Serna1988} and Suris
\cite{Suris1988} independently studied the symplectic conditions of
traditional RK methods and led to the profound discovery that
Gaussian collocation methods are symplectic for Hamiltonian systems.
Sun \cite{Sun1993,Sun2000} studied the symplectic partitioned RK
(PRK) method in detail and revealed the relationship between
symplectic RK methods and symplectic PRK methods. In spite of the
existence of explicit symplectic PRK methods for separable
Hamiltonian systems, they are not more general than composition or
splitting methods mentioned above \cite{Hairer2006}. More
importantly, the results of the literature
\cite{Feng2010,Hairer2006,Suris1988,Sun1993,Sun2000} illustrated
that there does not exist explicit symplectic RK method or explicit
symplectic PRK method for general nonseparable Hamiltonian systems.
Moreover, for the second-order separable Hamiltonian system
$q''=-\nabla U(q)$, the explicit symplectic
Runge--Kutta--Nystr\"{o}m \cite{Okunbor1992} method is exactly
identical to the operator-splitting explicit symplectic integrators
\cite{Sanz-Serna1994}.

Although the variational integrator \cite{Suris1990,Marsden2001}
using the discretized versions of Hamilton's principle provides an
alternative to derive symplectic integrators, Theorem~XI.6.4 of
\cite{Hairer2006} claims that the symplectic variational integrator
is identical to the symplectic PRK method. Hence, the variational
integrator cannot produce new symplectic integrators beyond the
symplectic RPK method, and explicit symplectic
variational integrators do not exist for nonseparable Hamiltonian
systems.

For some special nonseparable Hamiltonian systems, such as the
post-Newtonian (PN) Hamiltonian system of compact objects, the
Hamiltonian has a typical decomposition form of an integrable
principal part with a nonintegrable perturbation. In this case, the
symplectic integrator mixing an explicit integrator for the
integrable part with an implicit integrator for the nonintegrable
perturbation studied by Lubich et al. \cite{Lubich2010}, Zhong et
al. \cite{Zhong2010}, and Mei et al. \cite{Mei2013} has a high
computational efficiency compared with the completely implicit
symplectic integrator. Of course, the mixing symplectic integrator
is still implicit due to the iteration. Recently, Mei \& Huang
\cite{Mei2024} proposed using an explicit nonsymplectic method
instead of the implicit symplectic method to solve the nonintegrable
perturbation and adapted the mixing symplectic integrator to
explicit near-symplectic integrators. Numerical results showed that
the explicit near-symplectic scheme performs almost the same as the
implicit mixing symplectic integrator based on the presence of the
small PN parameter. However, the proposed near-symplectic method is
essentially nonsymplectic and thus does not have the advantages of
symplectic integrators in long-term numerical simulations. It is
true that there are also some researches on explicit symplectic
integrators for nonseparable Hamiltonian systems
\cite{Blanes2002,McLachlan2004,Chin2009,Tao2016a}. However, the
explicit symplectic integrators discussed in these researches are
only applicable to some specific types of nonseparable Hamiltonians,
and not to general nonseparable Hamiltonian systems.

All the previously mentioned symplectic integrators are constructed
in the original phase space $T^*\mathbb{R}^{d}$. An alternative
approach to deriving possible symplectic integrators was proposed by
Pihajoki \cite{Pihajoki2015} that constructs symplectic schemes for
the extended Hamiltonian in an extended phase space. It was shown
that the proposed methods could be symplectic in the extended phase
space and display a good numerical performance. Liu et al.
\cite{Liu2016} and Luo et al. \cite{Luo2017a,Luo2017b,Luo2021,
Luo2023} applied such methods to PN Hamiltonian systems and obtained
some good numerical results. To suppress the divergence of the two
copied numerical solutions, Tao \cite{Tao2016b} proposed adding a
mixed-up part to the extended Hamiltonian. Numerical results showed
that Tao's methods could provide a provable pleasant long-time
performance, while Pihajoki's methods are only accurate for a short
time. We note that both Pihajoki \cite{Pihajoki2015} and Tao
\cite{Tao2016b} did not claim the symplecticity of their methods in
the original phase space but only the symplecticity in the
extended phase
space. Recently, Jayawardana \& Ohsawa \cite{Jayawardana2023}
proposed a semiexplicit scheme by combining the symmetric projection
method \cite{Hairer2000}, and proved the symplecticity in both the
original phase space and the extended phase space. Ohsawa
\cite{Ohsawa2023} further proved the near preservation of the first
integrals of such methods. However, the method proposed by
Jayawardana \& Ohsawa \cite{Jayawardana2023} is essentially still
implicit because of the use of iterations, and the computational
efficiency significantly decreases with  {increases} of the
stepsize.

It follows from the  discussion stated above that there is no
effective approach to deriving explicit symplectic integrators for
general nonseparable Hamiltonian systems
\cite{McLachlan1992,McLachlan1993,Yoshida1993,Chin2009} and whether
there exist explicit symplectic integrators for general nonseparable
Hamiltonian systems remains an open problem in the field of
structure-preserving algorithms. In this paper, we aim to provide an
approach to designing explicit symplectic integrators for
nonseparable Hamiltonian systems with the help of extended phase
space. We note that once the original Hamiltonian is extended, a
completely integrable Hamiltonian may become nonintegrable in the
extended phase space. As the linear growth of global errors and the
near preservation of the first integrals of symplectic integrators
require the (near-) integrability \cite{Calvo1995,Hairer1997},
whether the two advantages of symplectic integrators hold for the
proposed explicit symplectic integrators or not will be essential,
which is another main concern of this paper. This paper thus marks
an introductory foray towards the development of explicit symplectic
integrators. To this end, the remainder of this paper is organized
as follows.

In Section~\ref{sec:exist-symplectic}, we summarize the existing
symplectic integrators to show the infeasibility of the existing
approaches to deriving explicit symplectic integrators for
nonseparable Hamiltonian systems. In Section~\ref{sec:existence},
using the linear symplectic transformation (symplectic matrix) from
the extended phase space to its special submanifold, we show the
existence of explicit symplectic integrators. In
section~\ref{sec:symp-projection}, taking into account the nonlinear
symplectic translation mapping, we propose the standard projection
symplectic integrator and the generalized projection symplectic
integrator that are both explicit and symplectic. Meanwhile, we show
that some of Tao's symplectic integrators and Pihajoki's original
phase space integrators are symplectic in the original phase space
as well. In Section~\ref{sec:error}, we first show the existence of
the global modified Hamiltonian for some special explicit symplectic
integrators proposed in this paper and then prove the linear growth
of global errors and the near preservation of the first integrals.
The short-term good performance of Tao's and Pihajoki's methods is
also explained. In Section~\ref{sec:num}, we numerically test the
explicit symplectic integrators with a completely integrable
nonseparable Hamiltonian and a nonintegrable nonseparable
Hamiltonian to illustrate the theoretical results presented in this
paper. Conclusions are drawn in the last section.

For the sake of convenience in  subsequent discussions,
the $4d$-dimensional symplectic manifold
$(T^*\mathbb{R}^{2d},\omega_{4d})$ is defined as follows:
\begin{equation*}\label{extend-manifold}
\begin{aligned}
&T^*\mathbb{R}^{2d}:=\Big\{(p,x,q,y)\in\mathbb{R}^{4d}
\big|(q,y)\in\mathbb{R}^{2d},~(p,x)\in T^{*}_{(q,y)}\mathbb{R}^{2d}\Big\},
\\
&\omega_{4d}:=\mathrm{d}p\wedge\mathrm{d}q + \mathrm{d}x\wedge\mathrm{d}y
=\sum_{i=1}^{d}\big(\mathrm{d}p^i\wedge\mathrm{d}q^i +
\mathrm{d}x^i\wedge\mathrm{d}y^i\big).
\end{aligned}
\end{equation*}
On noting the isomorphisms from the fibre $T^*_q\mathbb{R}^{d}$ of
the cotangent bundle $T^*\mathbb{R}^{d}$ at the point $q$ to
$\mathbb{R}^{d}$, we do not distinguish between
$T^*_q\mathbb{R}^{d}$ and $\mathbb{R}^{d}$ in the remaining
contents. The similar case also occurs for
$T^*_q\mathbb{R}^{2d}$ and $\mathbb{R}^{2d}$, $T^*\mathbb{R}^{d}$
and $\mathbb{R}^{2d}$,   and $T^*\mathbb{R}^{2d}$ and
$\mathbb{R}^{4d}$, because of
$T^*_q\mathbb{R}^{2d}\cong\mathbb{R}^{2d}$,
$T^*\mathbb{R}^{d}\cong\mathbb{R}^{2d}$, and
$T^*\mathbb{R}^{2d}\cong\mathbb{R}^{4d}$. In addition, we use
$\|\cdot\|$ to denote the two-norm throughout this
paper, if there is no ambiguity.

\section{Some symplectic integrators in the literature}\label{sec:exist-symplectic}
In this section, we summarize the main approaches to deriving
symplectic integrators in the literature, including the explicit
splitting methods for separable Hamiltonian systems, the symplectic
methods based on generating functions, and the symplectic RK methods
for general separable or nonseparable Hamiltonian systems. Because
of the equivalence between  the symplectic variational integrators
 and the symplectic PRK methods, we will not include the details on the
symplectic variational integrator, which can be found in \cite{Hairer2006}.
\subsection{Symplectic integrators derived in the original phase space}
\label{sec:symp-original}

\subsubsection{Splitting methods for separable Hamiltonian systems}
\label{sec:split}

Suppose that the Hamiltonian $H(p,q)$ is separable with the particular
separation \cite{Ruth1983,Forest1990}
\begin{equation}\label{separation-TV}
H(p,q)=T(p) + V(q),
\end{equation}
where $T(p)$ and $V(q)$ usually respectively represents the kinetic
energy and the potential energy of the separable system.

Let $X_H$ be the Hamiltonian vector field of $H(p,q)$ defined by the Poisson brackets
on the tangent bundle $T\mathbb{R}^d$ as follows:
\begin{equation}\label{vector-field}
X_H=\{\,\cdot\,,H\}=\frac{\partial H}{\partial p}\frac{\partial }{\partial q}
-\frac{\partial H}{\partial q}\frac{\partial }{\partial p}.
\end{equation}
We now split the Hamiltonian \eqref{separation-TV} into two
sub-Hamiltonians $T(p)$ and $V(q)$, and then the vector
field $X_H$ is also split into two sub-vector fields:
\begin{equation}\label{v-field-split}
X_H = X_T +X_V,
\end{equation}
where $X_T=\frac{\partial T}{\partial p}\frac{\partial }{\partial q}$ and
$X_V=-\frac{\partial V}{\partial q}\frac{\partial }{\partial p}$.
The canonical equations corresponding to $T(p)$ and $V(q)$ are respectively
\begin{equation}\label{sub-eq1}
\begin{aligned}
\frac{\mathrm{d}p}{\mathrm{d}t}=X_T(p)=0,
\qquad
\frac{\mathrm{d}q}{\mathrm{d}t}=X_T(q)=T_p(p),
\end{aligned}
\end{equation}
and
\begin{equation}\label{sub-eq2}
\begin{aligned}
\frac{\mathrm{d}p}{\mathrm{d}t}=X_V(p)=-V_q(q),
\qquad
\frac{\mathrm{d}q}{\mathrm{d}t}=X_V(q)=0,
\end{aligned}
\end{equation}
where $T_p=\frac{\partial T}{\partial p}$ and $V_p=\frac{\partial V}{\partial q}$.
The solutions of \eqref{sub-eq1} and \eqref{sub-eq2} can be explicitly
expressed by the phase flows $\exp(tX_T)$ and $\exp(tX_V)$ as follows:
\begin{equation}\label{flow1}
\begin{aligned}
\exp(tX_T):\ (p_0,q_0)\mapsto \big(p_0, q_0 + tT_p(p_0)\big),
\end{aligned}
\end{equation}
and
\begin{equation}\label{flow2}
\begin{aligned}
\exp(tX_V):\ (p_0,q_0)\mapsto \big(p_0 - tV_q(p_0),q_0\big).
\end{aligned}
\end{equation}

The general splitting procedure is to select suitable coefficients
$\alpha_1$, $\beta_1$, $\alpha_2$, $\beta_2,\ldots,\alpha_m$, $\beta_m$
such that the exponential mapping
\begin{equation}\label{splitting}
\Phi_h:=\exp(\beta_m hX_V)\circ\exp(\alpha_m hX_T)
\circ\cdots\circ\exp(\beta_1 hX_V)\circ\exp(\alpha_1 hX_T)
\end{equation}
is an $(r+1)$-th order approximation to the phase flow $\exp(hX_H)$,
i.e., $\Phi_h = \exp\big(hX_H + \mathcal{O}(h^{r+1})\big)$. Then,
$\Phi_h$ denotes an $r$-th order numerical method for
\eqref{separation-TV}, and the numerical solutions defined by
$(p_{n+1},q_{n+1})=\Phi_h(p_n,q_n)$ could be explicitly obtained
using \eqref{flow1} and \eqref{flow2}. Moreover, because the phase
flows $\exp(\alpha_k hX_T)$ and $\exp(\beta_k hX_V)$ are symplectic
mappings, $\Phi_h$ is of course symplectic as it is the composition
of symplectic mappings.

In fact, the above separation \eqref{separation-TV} could be extended to
the general case
\begin{equation}\label{general-separation}
H(p,q) = H_1(p,q) + H_2(p,q),
\end{equation}
where the solutions of both the two integrable
Hamiltonians $H_1(p,q)$ and $H_1(p,q)$ could be explicitly obtained.
For example, Wisdom \& Halman \cite{Wisdom1991} split the
Hamiltonian of the N-body problem in Jacobi coordinates as follows:
\begin{equation}\label{special-separation}
H(p,q) = H_0(p,q) + H_1(q),
\end{equation}
where $H_0(p,q)$ is Kepler motions of $N-1$ bodies across the center
body, and $H_1(q)$ is the intersection of the $N$ bodies' potential
energy that is in a much smaller magnitude than $H_0(p,q)$. Because
the Kepler motion could be solved by the Gauss function, the
separation \eqref{special-separation} corresponds to explicit
symplectic methods.

Another point worth of emphasizing is that we can also split the
Hamiltonian vector field $X_H$ into several integrable parts as:
\begin{equation}\label{several-sep}
X_H=X_1 + X_2 + \cdots +X_k,
\end{equation}
so that the composition of the phase flows $\exp(\alpha_i h X_1)$,
$\exp(\beta_i h X_2)$, $\exp(\gamma_i h X_3),\ldots$ with suitable
coefficients $\alpha_i,~\beta_i,~\gamma_i,~\ldots$ generates
high-order explicit symplectic methods
\cite{McLachlan2002,Hairer2006,Feng2010}. Recently, Wu et. al
\cite{Wu2022} proposed some explicit symplectic methods in
Schwarzschild- and Kerr-type black hole spacetimes just by using the
integrable multiple-part separation.

Although the construction of high-order splitting symplectic methods
can be conducted by solving the algebraic equations directly derived
by the Baker--Campbell--Hausdorff formula
\cite{McLachlan2002,Hairer2006}, the equivalence of splitting
methods and composition methods \cite{Hairer2006} yields another
effective way to derive high-order symplectic methods by composition
of low-order methods. The well-known approach is Yoshida's symmetric
composition strategy \cite{Yoshida1990}. Concerning the
relevant issues on  how to determine the coefficients,
we refer the reader to \cite{Hairer2006}.

The splitting approach becomes the most important way to derive
explicit symplectic methods for Hamiltonian systems. However, this
approach is only applicable to the separable Hamiltonian system.
If the Hamiltonian is nonseparable, then
the forced separation for the Hamiltonian results in either explicit
nonsymplectic methods or implicit symplectic methods.

\subsubsection{Symplectic methods based on generating functions}
\label{sec:generation}
Let
\begin{equation*}
J_{4d}=\left(
          \begin{array}{cc}
            O & I_{2d} \\
            -I_{2d} & O \\
          \end{array}
        \right)\in \mathbb{R}^{4d\times 4d},
\qquad
\widetilde{J}_{4n}=\left(
          \begin{array}{cc}
            J_{2d} & O \\
             O & -J_{2d} \\
          \end{array}
        \right)\in \mathbb{R}^{4d\times 4d}.
\end{equation*}
Define the space $CSp(\widetilde{J}_{4d},J_{4d})$ \cite{Feng2010} as follows:
\begin{equation}\label{CSp}
CSp(\widetilde{J}_{4d},J_{4d}):=\Big\{M\in\mathbb{R}^{4d\times4d}\big|
~\exists\,\mu\in\mathbb{R},~\mu\neq0,~\text{s.t.}~
M^\intercal J_{4d}M=\mu\widetilde{J}_{4d}\Big\}.
\end{equation}
Suppose that $M=\left(
                  \begin{array}{cc}
                    A_\alpha & B_\alpha  \\
                    C_\alpha & D_\alpha \\
                  \end{array}
                \right)\in CSp(\widetilde{J}_{4d},J_{4d})
$,
where $A_\alpha,~B_\alpha,~C_\alpha,~D_\alpha\in\mathbb{R}^{2d\times 2d}$.
Since $M$ is nonsingular, we denote the inverse of $M$ by
$
M^{-1}=\left(
              \begin{array}{cc}
                A^\alpha & B^\alpha  \\
                C^\alpha & D^\alpha \\
              \end{array}
              \right)
$.

Suppose that $(p,q)$ and $(P,Q)$ are the solutions of the
Hamiltonian system \eqref{Hamilton} respectively at time $t$ and
$t+\tau $, and then they are connected by the phase
flow $\exp(\tau X_H)$:
\begin{equation}\label{flow-express}
\exp(\tau X_H):~(p,q)\mapsto (P,Q).
\end{equation}
For convenience, we use the notations $Z=(p,q)^\intercal\in\mathbb{R}^{2d}$
and $\widehat{Z}=(P,Q)^\intercal\in\mathbb{R}^{2d}$.
 The matrix $M$ defines a linear fractional transformation from
 $W$ to $\widehat{W}$
satisfying
\begin{equation*}
\left(
  \begin{array}{c}
    \widehat{W} \\
    W \\
  \end{array}
\right)
=M
\left(
  \begin{array}{c}
    \widehat{Z} \\
    Z \\
  \end{array}
\right),
\end{equation*}
i.e.,
\begin{equation*}
\begin{aligned}
&\widehat{W}=A_\alpha \widehat{Z} + B_\alpha Z,
\quad\widehat{Z}=A^\alpha \widehat{W} + B^\alpha W,
\\
&W=C_\alpha \widehat{Z} + D_\alpha Z,
\quad Z=C^\alpha \widehat{W} + D^\alpha W,
\end{aligned}
\end{equation*}
under the condition $\det(C_\alpha+D_\alpha)\neq0$. According to
\cite[Theorem~5.3.3]{Feng2010}, for sufficiently small $\tau$ there
exists a time-dependent generating function $\phi(W,\tau)$ such that
\begin{equation}\label{generatingF}
\widehat{W} = \nabla\phi(W,\tau),
\end{equation}
and
\begin{equation}\label{HJ-equ}
\frac{\partial \phi}{\partial \tau} = -\mu H\big(A^\alpha\nabla\phi(W,\tau)
+B^\alpha W\big).
\end{equation}
Then \eqref{HJ-equ} is just the general Hamilton--Jacobi equation of
the Hamiltonian system \eqref{Hamilton} corresponding
to the linear transformation $M$.

Furthermore, if $H(Z)$ is analytical on $Z$, then
$\phi(W,\tau)$ can be expressed by a convergent power series in
$\tau$ for sufficiently small $\tau$
\cite[Theorem~5.3.4 in pp.~226]{Feng2010} as follows:
\begin{equation}\label{generatingSeries}
\phi(W,\tau)= \sum_{k=0}^{\infty}\phi^{(k)}(W)\tau^k,
\end{equation}
where the expressions of $\phi^{(k)}(W)$ for
$k=0,1,\ldots$ are determined by the recursive relation
presented in \cite[pp.~226]{Feng2010} or by comparing
powers of $\tau$ via the direct Taylor expansion such as
\cite[pp.~203]{Hairer2006}.

If we truncate the series \eqref{generatingSeries} at the $r$th
order as
\begin{equation}\label{genTrunSeries}
\psi^{(r)}(W,h)= \sum_{k=0}^{r}\phi^{(k)}(W)h^k,
\end{equation}
then the gradient mapping
\begin{equation}\label{gradient}
W\mapsto \widetilde{W}=\nabla\psi^{(r)}(W,h)
\end{equation}
defines an $r$th-order symplectic scheme $Z_n\mapsto Z_{n+1}$ by
\begin{equation}\label{gen-symplectic}
A_\alpha Z_{n+1} + B_\alpha Z_n= \nabla\psi^{(r)}(C_\alpha Z_{n+1}
+ D_\alpha Z_n,h),
\end{equation}
where $Z_n$ is just the numerical solution of \eqref{Hamilton}
at time $t_n=nh$, i.e., $Z_n\approx Z(t_n)$.

The matrix
$
M=\left(
    \begin{array}{cc}
      -J_{2d} & J_{2d} \\
      \frac{1}{2}I_{2d} & \frac{1}{2}I_{2d} \\
    \end{array}
  \right)
$ corresponds to the well known Poincar\'{e} type generating function
$\phi\big(\frac{P+p}{2},\frac{Q+q}{2},t\big)$, whose four leading-power
terms are $\phi^{(0)}(W)=\phi^{(2)}(W)=0$, $\phi^{(1)}(W)=-H(W)$, and
$\phi^{(3)}(W)=\frac{1}{24}(\nabla H)^{\intercal}J_{2d}H_{zz}
J_{2d}\nabla H$. In this case, it derives $W=\frac{1}{2}(Z+\widehat{Z})$
and $\widehat{W}=J_{2d}(Z-\widehat{Z})$.
Then, the second-order symplectic scheme just reads
\begin{equation*}
J_{2d}(Z_{n}-Z_{n+1})=-h\nabla H\Big(\frac{1}{2}(Z_{n}+Z_{n+1})\Big),
\end{equation*}
i.e.,
\begin{equation*}
Z_{n+1}=Z_{n}+hJ^{-1}_{2d}\nabla H\Big(\frac{1}{2}(Z_{n}+Z_{n+1})\Big),
\end{equation*}
which is just the midpoint rule \eqref{midpoint}. By considering
higher-order power terms of $h$, the fourth-order symplectic scheme
in this way reads
\begin{equation}\label{foruth-gen}
\begin{aligned}
Z_{n+1}=Z_{n}&+hJ^{-1}_{2d}\nabla
H\Big(\frac{1}{2}(Z_{n}+Z_{n+1})\Big)
\\
&-\frac{1}{24}h^3J^{-1}_{2d}\nabla\Big((\nabla
H)^{\intercal}J_{2d}H_{zz} J_{2d}\nabla
H\Big)\Big(\frac{1}{2}(Z_{n}+Z_{n+1})\Big).
\end{aligned}
\end{equation}

By letting
\begin{equation}\label{firstMatrix}
M=\left(
    \begin{array}{cccc}
      O & O &-I_{d} & O \\
      O & -I_{d} & O & O \\
      O  & O & O & I_{d} \\
      I_{d}  & O & O & O \\
    \end{array}
  \right),
\end{equation}
we get another widely used generating function in the form
$\phi(q,P,t)$, $W=(q,P)^\intercal$, and $\widehat{W}=-(p,Q)^\intercal$. The
leading-power terms are $\phi^{(0)}(W)=-\frac{1}{2}W^\intercal E_0W$
and $\phi^{(1)}(W)=-H(E_0W)$, where $E_0=\left(
       \begin{array}{cc}
         O & I_d \\
         I_d& O \\
       \end{array}
     \right)
$. Then, $\widehat{W}=\psi^{(1)}(W,h)$ defines the following first-order symplectic scheme
\begin{equation}\label{leftSymEuler}
\left\{
\begin{aligned}
&p_{n+1} = p_{n} - hH_q(p_{n+1},q_n),
\\
&q_{n+1} = q_{n} + hH_p(p_{n+1},q_n),
\end{aligned}
\right.
\end{equation}
which is just the left symplectic Euler method in \eqref{sym-Euler1}.

We note that the high-order symplectic methods based on
the modifying integrator theory of Chartier \cite{Chartier2007} also
attribute to the generating function method because the modified
equation just acts as the generating function as it is the formal
solution of the Hamilton--Jacobi equation. In fact, the fourth-order
modifying integrator of the implicit midpoint rule is just the same
as \eqref{foruth-gen}, while the second-order modifying integrator
of the symplectic Euler method \eqref{leftSymEuler} is the same as
the second-order generating function method generated by
$-(p,Q)=\psi^{(2)}(q,P,h)$ corresponding to the matrix
\eqref{firstMatrix}. More details on the generating function methods
can be found in the monograph \cite{Feng2010}.

On the one hand, according to \eqref{gen-symplectic}, the symplectic
method for general nonseparable Hamiltonian systems based on the
generating functions is explicit if and only if $C_\alpha=0$ which
cannot be satisfied by any matrix $M$ in
$CSp(\widetilde{J}_{4d},J_{4d})$. That is, there does not exist the
explicit symplectic generating function method, at least in the
sense of Feng's methodology for studying ``Geometric
Integration''.

On the other hand, according to \cite[Theorem~VI.5.1]{Hairer2006},
it is known that there exists such a generating function $\phi(Z,t)$ in principle
that only depends on $Z=(p,q)^\intercal$ corresponding to the phase flow
\eqref{flow-express}. However, even if the generating function
$\phi(p,q,t)$ could be expressed explicitly, the relations
connecting $\widehat{Z}=(P,Q)^\intercal$ with $Z=(p,q)^\intercal$ become
\begin{equation*}
P\frac{\partial Q}{\partial p} = \frac{\partial \phi}{\partial p},
\qquad
P\frac{\partial Q}{\partial q} - p = \frac{\partial \phi}{\partial q}.
\end{equation*}
Once the phase flow \eqref{flow-express} is unknown (i.e., the
explicit expressions of $\frac{\partial Q}{\partial p}$ and
$\frac{\partial Q}{\partial q}$ are unknown), then $P$ and $Q$
cannot be explicitly expressed by the generating function
$\phi(p,q,t)$ in terms of $(p,q)$. This point further excludes the
possibility of explicit symplectic methods based on generating
functions.

\subsubsection{Symplectic Runge--Kutta methods}\label{sec:symp-RK}
Using the notations $H_p(p,q)=\frac{\partial H}{\partial p}$ and
$H_q(p,q)=\frac{\partial H}{\partial q}$, the first so-called symplectic Euler
method is duo to De Vogelaere \cite{Vogelaere1956,Skeel2020}:
\begin{equation}\label{sym-Euler1}
\begin{aligned}
&p_{n+1}=p_n - hH_q(p_{n+1},q_n),
\\
&q_{n+1}=q_n+ hH_p(p_{n+1},q_n),
\end{aligned}
\quad \text{or} \quad
\begin{aligned}
&q_{n+1}=q_n+ hH_p(p_n,q_{n+1}),
\\
&p_{n+1}=p_n - hH_q(p_n,q_{n+1}),
\end{aligned}
\end{equation}
which are both of order one. Then, the composition of the two symplectic methods
in \eqref{sym-Euler1} results in the two following St\"{o}rmer--Verlet scheme
\cite{Hairer2003}
\begin{equation}\label{stormer1}
\begin{aligned}
&p_{n+1/2}=p_n - \frac{h}{2}H_q(p_{n+1/2},q_n),
\\
&q_{n+1}=q_n+
\frac{h}{2}\big(H_p(p_{n+1/2},q_n)+H_p(p_{n+1/2},q_{n+1})\big),
\\
&p_{n+1}=p_{n+1/2} - \frac{h}{2}H_q(p_{n+1/2},q_{n+1}),
\end{aligned}
\end{equation}
and
\begin{equation}\label{stormer2}
\begin{aligned}
&q_{n+1/2}=q_n+ \frac{h}{2}H_p(p_n,q_{n+1/2}),
\\
&p_{n+1}=p_n -
\frac{h}{2}\big(H_q(p_n,q_{n+1/2})+H_q(p_{n+1},q_{n+1/2})\big),
\\
&q_{n+1}=q_{n+1/2}+ \frac{h}{2}H_p(p_{n+1},q_{n+1/2}),
\end{aligned}
\end{equation}
which are symplectic and both of order two
\cite[Theorem~VI.3.4]{Hairer2006}.

It is noted that the symplectic Euler methods and
St\"{o}rmer--Verlet methods are explicit when applied
to the separable Hamiltonian $H(p,q)=T(p) + U(q)$. More general
situations for these methods to be explicit can be found in
\cite[pp.~189]{Hairer2006}. However, they are implicit for general
nonseparable Hamiltonian systems.

Another important method is the midpoint rule:
\begin{equation}\label{midpoint}
\begin{aligned}
&p_{n+1}=p_n -hH_q\big(\frac{p_n+p_{n+1}}{2},\frac{q_n+q_{n+1}}{2}\big),
\\
&q_{n+1}=q_n + hH_p\big(\frac{p_n+p_{n+1}}{2},\frac{q_n+q_{n+1}}{2}\big),
\end{aligned}
\end{equation}
which is symplectic, symmetric, and of order two. The midpoint rule is usually
implicit except for the rare case that the Hamiltonian system is linear, whose
solutions can be exactly obtained by the analytic method.

The generalization of  the symplectic Euler methods, the
St\"{o}rmer--Verlet scheme, and the midpoint rule results in the
symplectic RK method
\cite{Lasagni1998,Sanz-Serna1988,Suris1988}
\begin{equation}\label{RK}
\left\{
\begin{aligned}
& P_i=p_n - h\sum_{j=1}^{s}{a}_{ij}H_q(P_j,Q_j), \quad
i=1,\ldots,s,
\\
& Q_i=q_n + h\sum_{j=1}^{s}{a}_{ij}H_p(P_j,Q_j), \quad
i=1,\ldots,s,
\\
& p_{n+1}=p_n - h\sum_{i=1}^{s}{b}_{i}H_q(P_i,Q_i),
\\
& q_{n+1}=q_n+ h\sum_{i=1}^{s}{b}_{i}H_p(P_i,Q_i),
\end{aligned}\right.
\end{equation}
or the symplectic partitioned Runge--Kutta (PRK) method
\cite{Sun1993,Hairer2006,Feng2010}:
\begin{equation}\label{pRK}
\left\{
\begin{aligned}
& P_i=p_n - h\sum_{j=1}^{s}{a}_{ij}H_q(P_j,Q_j), \quad
i=1,\ldots,s,
\\
& Q_i=q_n + h\sum_{j=1}^{s}{\widehat{a}}_{ij}H_p(P_j,Q_j), \quad
i=1,\ldots,s,
\\
& p_{n+1}=p_n - h\sum_{i=1}^{s}{b}_{i}H_q(P_i,Q_i),
\\
& q_{n+1}=q_n+ h\sum_{i=1}^{s}{\widehat{b}}_{i}H_p(P_i,Q_i),
\end{aligned}\right.
\end{equation}
whose coefficients satisfy the symplectic conditions respectively
for the RK method and the PRK method. The well-known symplectic methods are
the Gauss--Legendre collocation methods (in RK type) and the
Lobatto IIIA-IIIB pairs (in PRK type).

It is noted that  high-order symplectic methods obtained by the
composition of the midpoint rule are just diagonally implicit
symplectic RK methods, while the composition of symplectic Euler
methods leads to the diagonally implicit symplectic PRK method (see
\cite{Sanz-Serna1991,Qin1992}, \cite[Theorem~VI.4.4]{Hairer2006}, or
\cite[Theorem~7.1.7]{Feng2010}). Only if the system admits an
integrable separation as $\dot{p}=f(q),~\dot{q}=g(p)$, there exist
explicit symplectic PRK methods. However, as claimed in
\cite[pp.~193]{Hairer2006}, ``such methods are not more general than
composition or splitting methods'',  and hence the explicit
symplectic PRK methods do not generate newer symplectic methods
beyond the symplectic splitting methods in such a separable
Hamiltonian case.

Moreover, according to the relation between symplectic PRK mehtods
and symplectic RK methods \cite{Sun2000}, the existence of explicit
symplectic PRK methods must lead to the existence of explicit
symplectic RK methods. However, as claimed in
\cite[Corollary~7.1.8]{Feng2010}, explicit RK methods cannot satisfy
the symplecticity conditions so that there does not
exist an explicit symplectic RK method. Therefore, it
is concluded that there do not exist explicit symplectic
RK or PRK methods for general nonseparable Hamiltonian
systems.

\subsection{Symplectic extended phase space methods}\label{sec:symp-extended}
\subsubsection{Extended phase space methods}\label{sec:explicit-extended}
To derive explicit integrators, Pihajoki \cite{Pihajoki2015}
extended the phase space of the Hamiltonian $H(p,q)$ from
$(p,q)\in\mathbb{R}^{2d}$ to $(p,x,q,y)\in\mathbb{R}^{4d}$ by adding
a pair of conjugate variables $(x,y)$, which corresponds to the
extended Hamiltonian
\begin{equation}\label{extend-Hamilton}
\Gamma(p,x,q,y)=H_A(p,y) + H_B(x,q),
\end{equation}
where $H_A(p,y)=H(p,y)$ and $H_B(x,q)=H(x,q)$. Then, the canonical
equations of $\Gamma(p,x,q,y)$ read

\begin{equation}\label{extended-equ}
\begin{aligned}
&\frac{\mathrm{d}p}{\mathrm{d}t}=-\frac{\partial\Gamma}{\partial q}
=-\frac{\partial H_B}{\partial q}, \quad
&\frac{\mathrm{d}q}{\mathrm{d}t}=+\frac{\partial\Gamma}{\partial p}
=+\frac{\partial H_A}{\partial p},
\\
&\frac{\mathrm{d}x}{\mathrm{d}t}=-\frac{\partial\Gamma}{\partial x}
=-\frac{\partial H_A}{\partial y}, \quad
&\frac{\mathrm{d}y}{\mathrm{d}t}=+\frac{\partial\Gamma}{\partial y}
=+\frac{\partial H_B}{\partial x}.
\end{aligned}
\end{equation}

If the equations \eqref{extended-equ} are prescribed by the initial
conditions $p(t_0)=x(t_0)=p_0$ and $q(t_0)=y(t_0)=q_0$, then it is
easily verified that the solution $\big(p(t),x(t),q(t),y(t)\big)$ of
\eqref{extended-equ} satisfies $p(t)=x(t)$ and $q(t)=y(t)$.
Moreover, the pair $\big(p(t),q(t)\big)$ is just the solution of the
original Hamiltonian $H(p,q)$ with the initial conditions
\eqref{initial-cod}. In this case, the Hamiltonian $\Gamma(p,x,q,y)$
determined by \eqref{extend-Hamilton} is just a couple of the original
Hamiltonian $H(p,q)$ and thus does not generate more information for
the solution of $H(p,q)$. However, the particular formulation of
$\Gamma(p,x,q,y)$ enables us to construct explicit integrators.

Because $H_A(p,y)$ only involves the variables $p$ and $y$,
its corresponding canonical equations are as follows:
\begin{equation}\label{extend-subA}
\begin{aligned}
&\frac{\mathrm{d}p}{\mathrm{d}t}=0,&
\qquad
&\frac{\mathrm{d}q}{\mathrm{d}t}=+\frac{\partial H_A}{\partial p},&
\\
&\frac{\mathrm{d}x}{\mathrm{d}t}=-\frac{\partial H_A}{\partial y},&
\qquad
&\frac{\mathrm{d}y}{\mathrm{d}t}=0.&
\end{aligned}
\end{equation}
By using the notations  $H_q(p,q)=\frac{\partial
H}{\partial q}$ and $H_p(p,q)=\frac{\partial
H}{\partial p}$, and denoting the phase flow of $H_A(p,y)$ by
$\exp(tX_{H_A})$, we explicitly express the solutions of
\eqref{extend-subA} by:
\begin{equation}\label{ext-subFlowA}
\exp(tX_{H_A}):\ (p_0,x_0,q_0,y_0)\mapsto \big(p_0,x_0-tH_q(p_0,y_0),
q_0 + tH_p(p_0,y_0) ,y_0\big).
\end{equation}
Similarly, the canonical equations of $H_B(x,q)$ are
\begin{equation}\label{extend-subB}
\begin{aligned}
&\frac{\mathrm{d}p}{\mathrm{d}t}=-\frac{\partial H_B}{\partial q}&
\qquad
&\frac{\mathrm{d}q}{\mathrm{d}t}=0,&
\\
&\frac{\mathrm{d}x}{\mathrm{d}t}=0,&
\qquad
&\frac{\mathrm{d}y}{\mathrm{d}t}=+\frac{\partial H_B}{\partial x}.&
\end{aligned}
\end{equation}
whose phase flow $\exp(tX_{H_B})$ could be expressed as
\begin{equation}\label{ext-subFlowB}
\exp(tX_{H_B}):\ (p_0,x_0,q_0,y_0)\mapsto \big(p_0-tH_q(x_0,q_0),x_0,
q_0  ,y_0+ tH_p(x_0,q_0)\big).
\end{equation}

According to \eqref{ext-subFlowA} and \eqref{ext-subFlowB}, the
Hamiltonians $H_A(p,y)$ and $H_B(x,q)$ are exactly and explicitly
solvable, and this means that the Hamiltonian $\Gamma(p,x,q,y)$ is
separable. Then, explicit symplectic methods via the splitting
approach in Section~\ref{sec:split} can be constructed for the
extended Hamiltonian $\Gamma(p,x,q,y)$. For example, the
second-order St\"{o}rmer--Verlet (or Leapfrog) scheme
\begin{equation}\label{Pihajoki-leapfrog}
\begin{aligned}
\Phi_h=\exp(\tfrac{1}{2}hX_{H_A})\exp(hX_{H_B})\exp(\tfrac{1}{2}hX_{H_A}),
\end{aligned}
\end{equation}
could be explicitly written as:
\begin{equation}\label{ext-leapfrog}
\begin{aligned}
&x_{n+1/2}=&&x_{n} - \tfrac{h}{2}H_q(p_n,y_n),
\\
&q_{n+1/2}=&&q_{n} + \tfrac{h}{2}H_p(p_n,y_n),
\\
&p_{n+1}~~=&&p_{n} - H_q(x_{n+1/2} ,q_{n+1/2}),
\\
&y_{n+1}~~=&&y_{n} + H_p(x_{n+1/2} ,q_{n+1/2}),
\\
&x_{n+1}~~=&&{x_{n+1/2}} - \tfrac{h}{2}H_q(p_{n+1},y_{n+1}),
\\
&q_{n+1}~~=&&{q_{n+1/2}} + \tfrac{h}{2}H_p(p_{n+1},y_{n+1}).
\end{aligned}
\end{equation}
High-order explicit integrators are similarly obtained
following the splitting approach described in Section~\ref{sec:split}.

However, it was pointed out in \cite{Pihajoki2015} that
the solutions $(p_n,q_n)$ and $(x_n,y_n)$ may diverge with time,
even though both of them can be regarded as the numerical solutions
of the original Hamiltonian $H(p,q)$. This phenomenon is also
numerically demonstrated by Tao \cite{Tao2016b} and Jayawardana \&
Ohsawa \cite{Jayawardana2023}. The divergence between $(p_n,q_n)$
and $(x_n,y_n)$ destroys the near conservation of $H(p_n,q_n)$,
though the whole Hamiltonian $\Gamma(p,x,q,y)$ is nearly conserved.
Moreover, the divergence may also destroy the symplecticity of the
underlying mapping $(p_n,q_n)\mapsto(p_{n+1},q_{n+1})$ in the
original phase space $T^*\mathbb{R}^{d}$, even though the mapping
$(p_n,x_n,q_n,y_n)\mapsto (p_{n+1},x_{n+1},q_{n+1},y_{n+1})$ defined
by \eqref{ext-leapfrog} is symplectic in the extended phase space
$T^*\mathbb{R}^{2d}$.

To remedy this drawback, Pihajoki \cite{Pihajoki2015} proposed the
idea as follows. We first split the Hamiltonian vectors $X_{H_A}$
and $X_{H_B}$ by $X_{H_A}=X_{A_1} + X_{A_2}$ and $X_{H_B}=X_{B_1} +
X_{B_2}$, where $X_{A_1} =\frac{\partial H_A}{\partial p}$, $X_{A_2}
=-\frac{\partial H_A}{\partial y}$, $X_{B_1} =\frac{\partial
H_B}{\partial x}$, and $X_{B_2} =-\frac{\partial H_B}{\partial q}$.
According to the commutativity between $X_{A_1}$ and  $X_{A_2}$ as
well as $X_{B_1}$ and $X_{B_2}$, the leapfrog method
\eqref{Pihajoki-leapfrog} could be rewritten as
\begin{equation}\label{ext-leapfrog-mod}
\begin{aligned}
&\exp(\tfrac{1}{2}hX_{A_1})\exp(\tfrac{1}{2}hX_{A_2})\exp(\tfrac{1}{2}hX_{B_1})
\exp(\tfrac{1}{2}hX_{B_2})
\\
&\circ\exp(\tfrac{1}{2}hX_{B_2})\exp(\tfrac{1}{2}hX_{B_1})
\exp(\tfrac{1}{2}hX_{A_2})\exp(\tfrac{1}{2}hX_{A_1}).
\end{aligned}
\end{equation}
By introducing the mixing mappings $M_i:~\mathbb{R}^{4d}\rightarrow
\mathbb{R}^{4d}$ for $i=1,2$, the leapfrog method is amended as
follows
\begin{equation}\label{ext-leapfrog-amend}
\begin{aligned}
\Psi_h=&\exp(\tfrac{1}{2}hX_{A_1})\exp(\tfrac{1}{2}hX_{A_2})\exp(\tfrac{1}{2}hX_{B_1})
\exp(\tfrac{1}{2}hX_{B_2})M_1
\\
&\circ\exp(\tfrac{1}{2}hX_{B_2})\exp(\tfrac{1}{2}hX_{B_1})
\exp(\tfrac{1}{2}hX_{A_2})\exp(\tfrac{1}{2}hX_{A_1})M_2,
\end{aligned}
\end{equation}
where
\begin{equation*}
M_i=\left(
      \begin{array}{cccc}
        \alpha_{M_i}I_{d}  & \widetilde{\alpha}_{M_i}I_{d} & O & O \\
        \widetilde{\alpha}_{M_i}I_{d} & \alpha_{M_i}I_{d}  & O & O \\
        O & O & \beta_{M_i}I_{d} & \widetilde{\beta}_{M_i}I_{d} \\
        O & O  & \widetilde{\beta}_{M_i}I_{d} & \beta_{M_i}I_{d} \\
      \end{array}
    \right)\in\mathbb{R}^{4d\times 4d},~i=1,2.
\end{equation*}
Suppose that $(p_n,x_n,q_n,y_n)$ is the numerical solution of
$\Gamma(p,x,q,y)$ obtained by $\Psi_h$ with the initial conditions
$(p_0,p_0,q_0,q_0)$, i.e.,
\begin{equation*}
(p_n,x_n,q_n,y_n)=(\Psi_h)^n(p_0,p_0,q_0,q_0),
\end{equation*}
then the numerical solution $(\widetilde{p}_n,\widetilde{q}_n)$ of
$H(p,q)$ is defined by

\begin{equation*}
(\widetilde{p}_n, \widetilde{q}_n )^\intercal
=P\,( p_n, x_n ,q_n ,y_n)^\intercal,
\end{equation*}
where
\begin{equation*}
P=\left(
    \begin{array}{cccc}
      \alpha_PI_{d} &  \widetilde{\alpha}_PI_{d}  & O & O \\
      O & O &  \beta_PI_{d} & \widetilde{\beta}_PI_{d} \\
    \end{array}
  \right)\in \mathbb{R}^{4d\times 2d}.
\end{equation*}
With suitable choice of $M_i$ and $P$, the numerical solution
$(\widetilde{p}_n,\widetilde{q}_n)$ performs good energy
preservation for the Hamiltonian $H(p,q)$.

Furthermore, Liu et al. \cite{Luo2017b} discussed the choice
of $M_i$ and $P$ and proposed some favored
options. In particular, they presented explicit
integrators with the midpoint permutation, with which the leapfrog
scheme is improved as:
\begin{equation*}
\Psi_h^*=M\exp(\frac{1}{2}hX_{H_A})\exp(hX_{H_B})\exp(\frac{1}{2}hX_{H_A}),
\end{equation*}
where
\begin{equation*}
M=\left(
      \begin{array}{cccc}
       \tfrac{1}{2}I_{d}  &\tfrac{1}{2}I_{d} & O & O \\
       \tfrac{1}{2}I_{d} & \tfrac{1}{2}I_{d}  & O & O \\
        O & O & \tfrac{1}{2}I_{d} & \tfrac{1}{2}I_{d} \\
        O & O  & \tfrac{1}{2}I_{d} & \tfrac{1}{2}I_{d} \\
      \end{array}
    \right)\in\mathbb{R}^{4d\times 4d}.
\end{equation*}
Numerical experiments showed a better performance of $\Psi_h^*$ than $\Psi_h$.

Tao \cite{Tao2016b} provided an alternative to suppress the divergence by extending the
Hamiltonian $H(p,q)$ as
\begin{equation}\label{Tao-ext}
{\Gamma}(p,x,q,y)=H_A(p,y) + H_B(x,q) + H_C(p,x,q,y),
\end{equation}
where
\begin{equation}\label{Tao-Hc}
H_C(p,x,q,y)=\frac{\omega}{2}\big(\|p-x\|^2+\|q-y\|^2\big),
\end{equation}
with some $\omega\in\mathbb{R}^{+}$. Similarly to
\eqref{extend-Hamilton}, the extended Hamiltonian \eqref{Tao-ext}
yields a copy of the exact solutions of $H(p,q)$ with the initial
conditions $p(t_0)=x(t_0)=p_0$ and $q(t_0)=y(t_0)=q_0$. In addition,
because the ODEs corresponding to $H_C(p,x,q,y)$ are linear, the
Hamiltonian $H_C(p,x,q,y)$ is completely integrable and explicitly
solvable.

Using the notation $X_{H_C}=\frac{\partial
H_C}{\partial p}\frac{\partial}{\partial q} -\frac{\partial
H_C}{\partial q}\frac{\partial}{\partial p} +\frac{\partial
H_C}{\partial x}\frac{\partial}{\partial y} -\frac{\partial
H_C}{\partial y}\frac{\partial}{\partial x}$, we
consider the following second-order leapfrog method for
\eqref{Tao-ext}:
\begin{equation}\label{Tao-leapfrog}
\begin{aligned}
&\overline{\Psi}_h=\exp(\tfrac{1}{2}hX_{H_A})\exp(\tfrac{1}{2}hX_{H_B})\exp(hX_{H_C})
\exp(\tfrac{1}{2}hX_{H_B})\exp(\tfrac{1}{2}hX_{H_A}),
\end{aligned}
\end{equation}
which has two more stages than \eqref{Pihajoki-leapfrog}, thereby
leading to a lower computational efficiency than
\eqref{Pihajoki-leapfrog}. Because of the intersection among all the
variable $p$, $x$, $q$, and $y$ in $H_C(p,x,q,y)$, the defect
$(p_n-x_n,q_n-y_n)$ of the numerical solution $(p_n,x_n,q_n,y_n)$
obtained by $\overline{\Psi}_h$ will be largely suppressed. The
integrator $\overline{\Psi}_h$ performs a good near preservation for
${\Gamma}(p_n,x_n,q_n,y_n)$, but not for $H(p_n,q_n)$.

It should be emphasized here that all the
above-mentioned explicit integrators are at most symplectic in the
extended phase space $T^*\mathbb{R}^{2d}$, but we cannot
assure that they are symplectic in the original phase space
$T^*\mathbb{R}^{d}$.

\subsubsection{Semiexplicit symplectic methods}\label{sec:semiexp}
In \cite{Jayawardana2023}, Jayawardana \& Ohsawa combined the
extended phase space method with the symmetric projection to derive
semiexplicit symplectic integrators. We describe the method in
\cite{Jayawardana2023} as follows. Let $A$ be a matrix defined by
\begin{equation}\label{matrixA}
A=\left(
    \begin{array}{cccc}
      I_d & -I_d & O & O \\
      O & O & I_d & -I_d \\
    \end{array}
  \right)\in\mathbb{R}^{4d\times2d},
\end{equation}
and then we have
$A(p,x,q,y)^\intercal=(p-x,q-y)^\intercal$.

Suppose that $\Phi_h$ is an explicit symplectic extended phase space
integrator for the Hamiltonian $\Gamma(p,x,q,y)$ in
\eqref{extend-Hamilton} with the initial conditions
$\big(p(t_0),x(t_0),q(t_0),y(t_0)\big)=(p_0,p_0,q_0,q_0)$. If there
exists such a vector $\varrho\in\mathbb{R}^{2d}$ that
\begin{equation}\label{semiexplicit}
(p_{n+1},p_{n+1},q_{n+1},q_{n+1}) = \Phi_h\big(
(p_{n},p_{n},q_{n},q_{n})+A^\intercal\varrho\big) +
A^\intercal\varrho,
\end{equation}
then $(p_{n+1},q_{n+1})$  is defined as the numerical
solution of the original Hamiltonian $H(p,q)$ at the $n+1$ step,
thereby providing a numerical method
$\widetilde{\Phi}_h$ from \eqref{semiexplicit} for $H(p,q)$ as
follows:
\begin{equation}\label{semiexplicit-orig}
(p_{n+1},q_{n+1})=\widetilde{\Phi}_h(p_{n},q_{n}).
\end{equation}

It is noted that the numerical method determined  by
\eqref{semiexplicit} is just an implicit symmetric projection method
projecting the vector in $T^*\mathbb{R}^{2d}$ onto its submanifold
$\mathcal{N}$:
\begin{equation}\label{manifold}
\mathcal{N}=\Big\{(p,p,q,q)\in T^*\mathbb{R}^{2d}\,\big|\,(p,q) \in
T^*\mathbb{R}^{d}\Big\}\subset T^*\mathbb{R}^{2d}.
\end{equation}
The existence of $(p_{n+1},p_{n+1},q_{n+1},q_{n+1})$ and $\varrho$
for the symmetric projection method \eqref{semiexplicit} can be
assured for sufficiently small stepsize $h$. Thus, the numerical
method $\widetilde{\Phi}_h$ given by \eqref{semiexplicit-orig} is
well defined.

The symplecticity of both the symmetric projection method
\eqref{semiexplicit} and its induced method $\widetilde{\Phi}_h$ is
successfully inherited from $\Phi_h$. Both the direct proof and the
geometric proof for the symplecticity can be found in
\cite{Jayawardana2023}, and we also present an intuitive and simple
proof in this paper by introducing some theorems. Although the
method \eqref{semiexplicit} is symplectic, the induced symplectic
integrator $\widetilde{\Phi}_h$ is implicit because iterations are
needed to find a suitable  $\varrho$. By using Broyden's method for
implicit iterations, Jayawardana \& Ohsawa \cite{Jayawardana2023}
showed that the semiexplicit method $\widetilde{\Phi}_h$ could be as
fast as Tao's explicit extended phase method for sufficiently small
stepsize.

However, with the increase of the stepsize $h$, the number of
iterations in a single stepsize significantly increases and thus
results in a low computational efficiency of the semiexplicit
symplectic integrator $\widetilde{\Phi}_h$. Moreover, even though
the semiexplicit symplectic integrator may be more efficient than
Tao's methods of the same order in some cases because Tao's methods
contain more stages, these semiexplicit symplectic integrators can
never be more  timesaving than Pihajoki's original explicit phase
space methods of the same order in any case with a given stepsize on
noting the fact that the former adds an implicit iteration procedure
based on the latter.

\section{Existence of explicit symplectic integrators}\label{sec:existence}
On the basis of the discussion on the extended phase space method,
our approach to designing explicit symplectic
integrators is inspired by the idea of extended phase
space methods. We first introduce the following lemma regarding the
relation between the symplecticity of the extended phase space and
that of the original phase space.

\begin{lemma}\label{theorem1}
Suppose that the mapping $\Phi:\ T^*\mathbb{R}^{2d}\rightarrow
T^*\mathbb{R}^{2d}$ is symplectic in the extended phase space.
If $\Phi(\mathcal{N})\subset \mathcal{N}$, i.e.,
for any $p,q\in\mathbb{R}^{d}$, there exist $\tilde{p},\tilde{q}\in
\mathbb{R}^{d}$ such that $\Phi(p,p,q,q) =(\tilde{p},\tilde{p},
\tilde{q},\tilde{q})$, then it defines a symplectic mapping
$\widetilde{\Phi} :\  T^*\mathbb{R}^{d}\rightarrow
T^*\mathbb{R}^{d}$ in the original phase space
such that $\widetilde{\Phi}(p,q)=(\tilde{p},\tilde{q})$.
\end{lemma}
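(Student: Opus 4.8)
The plan is to exploit the fact that the diagonal $\mathcal{N}$ is itself a symplectic submanifold of $(T^*\mathbb{R}^{2d},\omega_{4d})$ and that $\widetilde{\Phi}$ is precisely the restriction of $\Phi$ to it read through the natural parametrization. First I would introduce the embedding $\iota:\ T^*\mathbb{R}^{d}\to T^*\mathbb{R}^{2d}$ defined by $\iota(p,q)=(p,p,q,q)$, which is a diffeomorphism onto $\mathcal{N}$. The hypothesis $\Phi(\mathcal{N})\subset\mathcal{N}$ together with the definition $\widetilde{\Phi}(p,q)=(\tilde{p},\tilde{q})$ translates exactly into the intertwining identity $\Phi\circ\iota=\iota\circ\widetilde{\Phi}$, which is the structural fact on which everything rests.

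The key computation is the pullback of $\omega_{4d}$ under $\iota$. Since $x=p$ and $y=q$ along $\mathcal{N}$, one has $\iota^{*}\mathrm{d}x=\mathrm{d}p$ and $\iota^{*}\mathrm{d}y=\mathrm{d}q$, so that $\iota^{*}\omega_{4d}=\mathrm{d}p\wedge\mathrm{d}q+\mathrm{d}p\wedge\mathrm{d}q=2\,\omega_{2d}$. The factor $2$ is the only bookkeeping one must track, and it will cancel cleanly at the end. Equivalently, in the matrix language of \eqref{symplectic-cert2}, writing $D\iota$ for the constant $4d\times 2d$ Jacobian of $\iota$, a direct block computation gives $(D\iota)^{\intercal}J_{4d}(D\iota)=2J_{2d}$.

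From here symplecticity of $\widetilde{\Phi}$ follows by chaining pullbacks. Applying $\widetilde{\Phi}^{*}$ to $\iota^{*}\omega_{4d}$ and using the intertwining identity,
\begin{equation*}
\widetilde{\Phi}^{*}\big(\iota^{*}\omega_{4d}\big)
=(\iota\circ\widetilde{\Phi})^{*}\omega_{4d}
=(\Phi\circ\iota)^{*}\omega_{4d}
=\iota^{*}\big(\Phi^{*}\omega_{4d}\big)
=\iota^{*}\omega_{4d},
\end{equation*}
where the last equality is the symplecticity $\Phi^{*}\omega_{4d}=\omega_{4d}$ of $\Phi$. Substituting $\iota^{*}\omega_{4d}=2\,\omega_{2d}$ on both sides and dividing by $2$ yields $\widetilde{\Phi}^{*}\omega_{2d}=\omega_{2d}$, which is precisely \eqref{symplectic-cert1} for $\widetilde{\Phi}$. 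In matrix form the same argument reads, with $M$ the Jacobian of $\Phi$ on $\mathcal{N}$, $\widetilde{M}$ that of $\widetilde{\Phi}$, and the differentiated relation $M\,D\iota=D\iota\,\widetilde{M}$,
\begin{equation*}
2\,\widetilde{M}^{\intercal}J_{2d}\widetilde{M}
=(D\iota\,\widetilde{M})^{\intercal}J_{4d}(D\iota\,\widetilde{M})
=(M\,D\iota)^{\intercal}J_{4d}(M\,D\iota)
=(D\iota)^{\intercal}J_{4d}(D\iota)
=2J_{2d},
\end{equation*}
using $M^{\intercal}J_{4d}M=J_{4d}$ in the penultimate step.

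I do not expect a genuine obstacle in the core argument; the real care lies in the preliminary well-posedness. I would first verify that $\widetilde{\Phi}$ is single-valued and smooth: single-valuedness is immediate because $\Phi(p,p,q,q)\in\mathcal{N}$ determines $(\tilde{p},\tilde{q})$ uniquely through $\iota^{-1}$, and smoothness follows since $\Phi$ is a symplectomorphism (hence $C^{\infty}$) while $\iota$ and $\iota^{-1}|_{\mathcal{N}}$ are smooth. Once $\widetilde{\Phi}$ is known to be a well-defined differentiable map, the pullback identity holds pointwise and the conclusion is immediate.
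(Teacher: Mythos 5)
Your proof is correct and is essentially the same argument as the paper's: the paper restricts the symplecticity identity $\mathrm{d}\hat{p}\wedge\mathrm{d}\hat{q}+\mathrm{d}\hat{x}\wedge\mathrm{d}\hat{y}=\mathrm{d}p\wedge\mathrm{d}q+\mathrm{d}x\wedge\mathrm{d}y$ to the diagonal $\mathcal{N}$, where both sides become twice the respective $2d$-dimensional forms, and cancels the factor $2$ --- exactly your computation $\iota^{*}\omega_{4d}=2\,\omega_{2d}$ combined with the intertwining relation $\Phi\circ\iota=\iota\circ\widetilde{\Phi}$. Your write-up merely formalizes the paper's differential-form manipulation through the explicit embedding $\iota$ (with the equivalent matrix version) and adds the routine well-posedness remarks.
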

\begin{proof}
Let $p,q,x,y\in\mathbb{R}^{d}$ and
$(\hat{p},\hat{x},\hat{q},\hat{y}) =\Phi(p,x,q,y)$. Since $\Phi$ is
symplectic, we have $\mathrm{d}\hat{p}\wedge\mathrm{d}\hat{q} + \mathrm{d}\hat{x}
\wedge\mathrm{d}\hat{y}=\mathrm{d}p\wedge\mathrm{d}q +
\mathrm{d}x\wedge\mathrm{d}y$. It then follows from $\Phi(p,p,q,q)
=(\tilde{p},\tilde{p},\tilde{q},\tilde{q})$ that
$\mathrm{d}\tilde{p}\wedge\mathrm{d}\tilde{q} + \mathrm{d}\tilde{p}
\wedge\mathrm{d}\tilde{q}=\mathrm{d}p\wedge\mathrm{d}q +
\mathrm{d}p\wedge\mathrm{d}q$, i.e., $\mathrm{d}\tilde{p}
\wedge\mathrm{d}\tilde{q}=\mathrm{d}p\wedge\mathrm{d}q$. According
to \eqref{symplectic-cert1}, the mapping $\widetilde{\Phi} :\
T^*\mathbb{R}^{d}\rightarrow T^*\mathbb{R}^{d}$ is certainly
symplectic.
\end{proof}

This lemma just states a fact that if the submanifold $\mathcal{N}$
defined by \eqref{manifold} is invariant under the symplectic
mapping $\Phi$, it then yields a symplectic mapping
$\widetilde{\Phi}$ induced by $\Phi$ in the original phase space
$T^*\mathbb{R}^d$. As we previously analyzed, the phase flow
$\exp(tX_{\Gamma})$ of the Hamiltonian $\Gamma(p,x,q,y)$ actually
satisfies the condition and is a typical example of $\Phi$ in
Lemma~\ref{theorem1}. With this lemma, the existence of explicit
symplectic integrators (in the original phase space) for
nonseparable Hamiltonian systems comes down to finding explicit
symplectic (extended phase space) methods under which the
submanifold $\mathcal{N}$ is invariant.

As we analyzed in Section~\ref{sec:explicit-extended}, although
Pihajoki's or Tao's explicit extended phase space methods are
symplectic in the extended phase space, they do not satisfy
$\Phi(\mathcal{N})\subset\mathcal{N}$ in general. Considering this
point, we raise the following question.

\begin{question}[Existence of explicit
symplectic mapping]\label{question1}
\emph{For an $r$th-order explicit symplectic extended phase space
method
$\Phi_h:~(p_n,p_n,q_n,q_n)\mapsto(\widetilde{p}_{n+1},\widetilde{x}_{n+1},
\widetilde{q}_{n+1},\widetilde{y}_{n+1})$, does there exist an explicit
symplectic projection mapping $M:~T^*\mathbb{R}^{2d}\rightarrow
\mathcal{N}$, i.e., exist $(p_{n+1},q_{n+1})\in T^*\mathbb{R}^{d}$
such that
\begin{equation*}
(p_{n+1},p_{n+1},q_{n+1},q_{n+1})=M(\widetilde{p}_{n+1},\widetilde{x}_{n+1},
\widetilde{q}_{n+1},\widetilde{y}_{n+1}) ,
\end{equation*}
and $(p_{n+1},q_{n+1})$ are explicitly obtained from $\widetilde{p}_{n+1}$,
$\widetilde{x}_{n+1}$, $\widetilde{q}_{n+1}$, and $\widetilde{y}_{n+1}$?}
\end{question}

If the answer to the question is positive, then we can just define
$(p_{n+1},q_{n+1})$ as the solutions of the original Hamiltonian
$H(p,q)$. The underlying numerical method
$\widetilde{\Phi}_h:~(p_n,q_n) \mapsto(p_{n+1},q_{n+1})$  is assured
to be symplectic in the original phase space. Then, the remaining
key issue is to explicitly determine $p_{n+1}$ and $q_{n+1}$ from
$\widetilde{p}_{n+1}$, $\widetilde{x}_{n+1}$, $\widetilde{q}_{n+1}$,
and $\widetilde{y}_{n+1}$ such that there exists the symplectic
mapping $M$ which maps
$(\widetilde{p}_{n+1},\widetilde{x}_{n+1},\widetilde{q}_{n+1},\widetilde{y}_{n+1})$
to $(p_{n+1},p_{n+1},q_{n+1},q_{n+1})$.

As both $(\widetilde{p}_{n+1}, \widetilde{q}_{n+1})$ and
$(\widetilde{x}_{n+1}, \widetilde{y}_{n+1})$ can be regarded as the
numerical solutions of order $r$ for the original Hamiltonian
$H(p,q)$, a natural idea is to define $p_{n+1}$ as the weighted
average of $\widetilde{p}_{n+1}$ and $\widetilde{x}_{n+1}$,
$q_{n+1}$ as the weighted average of $\widetilde{q}_{n+1}$ and
$\widetilde{y}_{n+1}$, and $M$ as a linear symplectic transformation
(symplectic matrix). Keeping this in mind, we present
the following theorem to determine the weight
coefficients and construct the symplectic matrix $M$.

\begin{theorem}\label{theorem2}
For any $p,q,x,y\in\mathbb{R}^{d}$, there exists a
symplectic matrix $M\in \mathbb{R}^{4d\times4d}$ and the
corresponding vectors
$\lambda=(\lambda^1,\ldots,\lambda^d)^\intercal,~
\xi=(\xi^1,\ldots,\xi^d)^\intercal\in \mathbb{R}^{d}$ such that
\begin{equation}\label{weight-add}
M(p,x,q,y)^\intercal = (\tilde{p},\tilde{p},\tilde{q},\tilde{q})^\intercal,
\end{equation}
where $\tilde{p}=\lambda\cdot p  + (1-\lambda)\cdot x
=\big(\lambda^1p^1+(1-\lambda^1)x^1,\ldots,\lambda^dp^d+(1-\lambda^d)x^d\big)$
and $\tilde{q}=\xi\cdot q  + (1-\xi)\cdot y
=\big(\xi^1q^1+(1-\xi^1)y^1,\ldots,\xi^dq^d+(1-\xi^d)y^d\big)$.
\end{theorem}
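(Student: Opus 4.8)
The plan is to exploit the componentwise structure of the target and reduce the $4d$-dimensional claim to $d$ independent $4$-dimensional ones. Since $\tilde p^i$ involves only $(p^i,x^i)$ and $\tilde q^i$ only $(q^i,y^i)$, and since $\omega_{4d}=\sum_{i=1}^d(\mathrm{d}p^i\wedge\mathrm{d}q^i+\mathrm{d}x^i\wedge\mathrm{d}y^i)$ splits as a direct sum over the $d$ quadruples $(p^i,x^i,q^i,y^i)$, it suffices to produce, for each $i$, a $4\times4$ symplectic matrix $M_i$ with $M_i(p^i,x^i,q^i,y^i)^\intercal=(\tilde p^i,\tilde p^i,\tilde q^i,\tilde q^i)^\intercal$. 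Assembling the $M_i$ so that each acts within its own quadruple (equivalently, forming a $4d\times4d$ matrix whose $d\times d$ blocks are diagonal) then yields a matrix $M$ that is symplectic in $(T^*\mathbb{R}^{2d},\omega_{4d})$, because the preservation of $\omega_{4d}$ in the sense of \eqref{symplectic-cert1} decouples across $i$. From here I would fix one index and work in $\mathbb{R}^4$ with variables $(p,x,q,y)$.

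For the $4\times4$ problem I would take the first and third rows of $M$ to be the averaging covectors $(\lambda,1-\lambda,0,0)$ and $(0,0,\xi,1-\xi)$, so that the first and third entries of $Mv$ are automatically $\tilde p$ and $\tilde q$; the second and fourth rows $r_2,r_4$ are then the unknowns. Writing the matrix symplecticity condition (the $4d$ analogue of \eqref{symplectic-cert2}) as the skew pairing relations among the rows of $M$, I would record the conditions pairing $r_1,r_3$ with $r_2,r_4$ to zero and pairing the two averaging rows and the two unknown rows to the correct nonzero constant, together with the two matching equations $r_2\cdot v=\tilde p$ and $r_4\cdot v=\tilde q$ that force the second and fourth entries of $Mv$ to again equal $\tilde p$ and $\tilde q$ on the given vector. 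The crucial relation is the pairing of the two averaging rows, which reduces to $\lambda\xi+(1-\lambda)(1-\xi)=1$, i.e. $\lambda+\xi=2\lambda\xi$; I would impose this as the defining relation between the weights. The zero-pairing conditions together with the matching equation are three \emph{linear} constraints on the four entries of $r_2$ (and likewise for $r_4$), each leaving a one-parameter family, and the single remaining bilinear relation between $r_2$ and $r_4$ is arranged by a choice within these families, which a dimension count shows is generically achievable.

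The main obstacle is conceptual as much as computational. Because the prescribed image $(\tilde p,\tilde p,\tilde q,\tilde q)$ is rank-deficient — its two momentum copies and its two position copies coincide — no \emph{fixed} invertible matrix can realize it for all inputs, and indeed the naive midpoint projection onto $\mathcal{N}$ in \eqref{manifold} with all weights $\tfrac12$ (the matrix used by Liu et al.) is exactly the singular matrix that fails to be symplectic. The matrix $M$ must therefore be allowed to depend on the vector, and the content of the theorem is that a genuinely invertible symplectic matrix can nonetheless be forced to take the prescribed value on the single given vector. The relation $\lambda+\xi=2\lambda\xi$ is the crisp manifestation of this tension: it excludes $\lambda=\xi=\tfrac12$, and more generally any $\lambda=\tfrac12$, so the weights cannot be chosen entirely freely. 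The remaining care lies in the degenerate configurations — the zero vector (take $M=I$), and vectors for which every admissible weight choice would give $\tilde p=\tilde q=0$ — which must be handled separately. As a clean fallback that settles existence in all cases, I would note that the group preserving the $4\times4$ structure matrix is isomorphic to $Sp(4,\mathbb{R})$, which acts transitively on $\mathbb{R}^4\setminus\{0\}$; choosing the weights so that the target is nonzero whenever $v\neq0$ yields $M_i$ at once, while the explicit row construction above makes the transformation concrete.
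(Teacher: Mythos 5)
Your componentwise reduction to $d$ independent $4\times4$ problems, assembled into a $4d\times4d$ block matrix, is exactly the structure of the paper's proof (there the blocks appear as matrices $M_k$ acting on the $k$th quadruple and fixing all other entries, with $M=\prod_{k=1}^d M_k$). The genuine trouble is in your main $4\times4$ construction. By freezing the first and third rows to be the averaging covectors for \emph{all} inputs, you force the row pairing $\omega(r_1,r_3)=1$, i.e.\ $\lambda\xi+(1-\lambda)(1-\xi)=1$. This relation is an artifact of that ansatz, not of the problem: in the paper's proof all four rows of $M_k$ are allowed to depend on the given vector, no relation between $\lambda$ and $\xi$ appears, and in Case I the weight $\lambda^k$ is completely arbitrary (Remark~\ref{remark1}: the weights are ``almost free''). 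Your relation is moreover fatal for the intended use of the theorem: $\lambda\xi+(1-\lambda)(1-\xi)$ is a convex combination of $\xi$ and $1-\xi$, hence strictly less than $1$ whenever $\lambda,\xi\in(0,1)$, so your admissible weights \emph{never} lie in $(0,1)^2$ --- precisely the weights required by Definition~\ref{definition1} and Theorem~\ref{theorem3}, whose proof re-uses the cases of the paper's proof of Theorem~\ref{theorem2}. So even if completed, the ansatz proves a statement too weak to support the rest of the paper. Finally, ``generically achievable'' is not a proof, and your description of the exceptional set is incorrect. Take $d=1$, $q=y=0$ and $(p,x)=\alpha(\xi,1-\xi)$ with $\alpha\neq0$: every $r$ with $\omega(r_1,r)=\omega(r_3,r)=0$ satisfies $\xi r_p+(1-\xi)r_x=0$, hence $r\cdot v=\alpha\big(\xi r_p+(1-\xi)r_x\big)=0$, while your relation gives $\tilde p=\alpha\big(\lambda\xi+(1-\lambda)(1-\xi)\big)=\alpha\neq0$; the matching equation $r_2\cdot v=\tilde p$ is then unsolvable. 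This failure occurs with $\tilde p\neq0$, so it is not among the configurations (``every admissible weight choice gives $\tilde p=\tilde q=0$'') that you propose to handle separately.

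Your fallback, by contrast, is a correct and complete route, and it is genuinely different from the paper's. For each nonzero quadruple choose $\lambda^k,\xi^k$ --- now unconstrained --- so that $(\tilde p^k,\tilde q^k)\neq(0,0)$: if $(p^k,x^k)\neq(0,0)$ all but at most one value of $\lambda^k$ gives $\tilde p^k\neq0$, and otherwise $(q^k,y^k)\neq(0,0)$ and all but at most one $\xi^k$ gives $\tilde q^k\neq0$. Then transitivity of the $4\times4$ symplectic group on $\mathbb{R}^4\setminus\{0\}$ (any nonzero vector extends to a symplectic basis) yields $M_k$ carrying the quadruple to its prescribed nonzero image; on vanishing quadruples take $M_k=I_4$. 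This argument recovers the almost-free weights of Remark~\ref{remark1} and is shorter than the paper's, at the price of being nonconstructive, whereas the paper exhibits $M_k$ explicitly as a product of symplectic shears $UV$, or as $\mathrm{diag}(T^{-\intercal},T)$ in the degenerate cases --- a form that makes the resulting integrators implementable. If you discard the row ansatz together with its relation and promote the transitivity argument to the main proof, supplying the short weight-selection step above, your proof is correct.
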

\begin{proof}
According to the values of the $k$th entries $p^k,x^k,q^k,y^k$, it
falls into the following four cases:
\begin{description}
  \item[\textbf{Case I}] $(p^k)^2+(x^k)^2\neq0 $ and $(q^k)^2+(y^k)^2\neq0 $;
  \item[\textbf{Case II}] $(p^k)^2+(x^k)^2=0 $ and $(q^k)^2+(y^k)^2\neq0 $;
  \item[\textbf{Case III}] $(p^k)^2+(x^k)^2\neq0 $ and $(q^k)^2+(y^k)^2=0 $;
  \item[\textbf{Case IV}] $(p^k)^2+(x^k)^2=0 $ and $(q^k)^2+(y^k)^2=0 $.
\end{description}
In what follows, we will construct $d$ matrices $M_k$
and the corresponding entries $\lambda^k$ and $\xi^k$
for all $k=1,\ldots,d$ under the four different cases.

\textbf{Case I:} $(p^k)^2+(x^k)^2\neq0 $ and $(q^k)^2+(y^k)^2\neq0
$. For this case, $\lambda^k$ can be arbitrary real number and
$\xi^k$ is selected such that $\tilde{q}^k=\xi^kq^k +
(1-\xi^k)y^k\neq0$. Because $(q^k)^2+(y^k)^2\neq0 $, the existence
of such $\xi^k$ is assured. By fixing the values of $\lambda^k$ and
$\xi^k$, we denote $\Delta p^k=\tilde{p}^k-{p}^k$, $\Delta
x^k=\tilde{p}^k-{x}^k$, $\Delta q^k=\tilde{q}^k-{q}^k$, and $\Delta
y^k=\tilde{q}^k-{y}^k$.

Let
$S_0=\left(
     \begin{array}{cc}
       a^k & b^k \\
       b^k & c^k \\
     \end{array}
   \right)
$
and
$T_0=\left(
     \begin{array}{cc}
       d^k & e^k \\
       e^k & f^k \\
     \end{array}
   \right)
$ be two $2\times 2$ real symmetric matrices,
$U_0=\left(
     \begin{array}{cc}
       I_2 & S_0 \\
       O & I_2 \\
     \end{array}
   \right),
$
and
$V_0=\left(
     \begin{array}{cc}
       I_2 & O \\
       T_0 & I_2 \\
     \end{array}
   \right)
$. We consider the linear equations
\begin{equation}\label{case1-eq1}
V_0(p^k ,x^k,q^k ,y^k)^\intercal
= (p^k ,x^k,\tilde{q}^k ,\tilde{q}^k)^\intercal,
\end{equation}
and
\begin{equation}\label{case1-eq2}
U_0(p^k,x^k ,\tilde{q}^k ,\tilde{q}^k)^\intercal
=(\tilde{p}^k ,\tilde{p}^k ,\tilde{q}^k ,\tilde{q}^k).
\end{equation}
Because $(p^k)^2+(x^k)^2\neq0$, at least one
of the inequalities $p^k\neq0$ and $x^k\neq0$ holds.

If $p^k\neq0$, we have the solutions
\begin{equation}\label{case1-eq3}
e^k=\frac{1}{p^k}\Big(\Delta y^k - f^k x^k \Big),
\qquad
d^k=\frac{1}{p^k}\Big(\Delta q^k - e^k x^k \Big),
\end{equation}
for the equations \eqref{case1-eq1}, where the free parameter $f^k$
could be arbitrarily selected. Otherwise, the solutions of
\eqref{case1-eq1} are explicitly expressed as
\begin{equation}\label{case1-eq4}
e^k=\frac{1}{x^k}\Big(\Delta q^k - d^k p^k \Big), \qquad
f^k=\frac{1}{x^k}\Big(\Delta y^k - e^k p^k \Big),
\end{equation}
for the case $x^k\neq0$, where $d^k$ is freely selected.

For the equations \eqref{case1-eq2}, due to $\tilde{q}^k\neq0$
we obtain the solutions
\begin{equation}\label{case1-eq5}
b^k=\frac{\Delta p^k}{\tilde{q}^k}-a^k,
\qquad
c^k=\frac{\Delta x^k}{\tilde{q}^k}-b^k,
\end{equation}
where $a^k$ is a free parameter.

Once the values of $a^k$ , $b^k$, $c^k$, $d^k$, $e^k$, and $f^k$ are
determined, we then define the $d\times d$ diagonal matrix $A$ as
follows:
\begin{equation}\label{case1-eq6}
A_{ij}=
\left\{
\begin{aligned}
& a^k,\quad i=j=k,
\\
& 0,\quad \text{otherwise},
\end{aligned}
\right.
\end{equation}
whose $k$th main diagonal entry is $a^k$ and all other entries are
zero. Likewise, we can define the $d\times d$  diagonal matrices
$B$, $C$, $D$, $E$ and $F$, whose $k$th main diagonal entries are
respectively $b^k$, $c^k$, $d^k$, $e^k$, and $f^k$, and all other
entries are zero.

Let
\begin{equation*}
T=\left(
    \begin{array}{cc}
      A & B \\
      B & C \\
    \end{array}
  \right)\in \mathbb{R}^{2d\times2d},
  \quad
  S=\left(
    \begin{array}{cc}
      D & E \\
      E & F \\
    \end{array}
  \right)\in \mathbb{R}^{2d\times2d},
\end{equation*}
and then $S$ and $T$ must be symmetric. We further
derive $U$ and $V$ as follows:
\begin{equation*}
U=\left(
    \begin{array}{cc}
      I_{2d} & S \\
      O & I_{2d} \\
    \end{array}
  \right)\in \mathbb{R}^{4d\times4d},
  \quad
  V=\left(
    \begin{array}{cc}
      I_{2d} & O \\
      T & I_{2d} \\
    \end{array}
  \right)\in \mathbb{R}^{4d\times4d}.
\end{equation*}
According to \cite[Proposition~4.1.7]{Feng2010}, both $U$ and $V$
are symplectic. Now, let $M_k=UV$, and then $M_k$ is
also symplectic as it is the product of two symplectic matrices.

Let $u, v, w, z\in\mathbb{R}^{d}$ be arbitrary vectors whose $k$th
entries are respectively $p^k$, $x^k$, $q^k$, and
$y^k$. We consequently define the vectors
$\tilde{u},\tilde{v},\tilde{w}, \tilde{z}\in\mathbb{R}^{d}$ as
follows:
\begin{equation*}
\tilde{u}_i=
\left\{
\begin{aligned}
&u_i,\  i\neq k,
\\
&\tilde{p}^k,\  i=k,
\end{aligned}
\right.
\quad
\tilde{v}_i=
\left\{
\begin{aligned}
&v_i,\  i\neq k,
\\
&\tilde{p}^k,\  i=k,
\end{aligned}
\right.
\quad
\tilde{w}_i=
\left\{
\begin{aligned}
&w_i,\  i\neq k,
\\
&\tilde{q}^k,\  i=k,
\end{aligned}
\right.
\quad
\tilde{z}_i=
\left\{
\begin{aligned}
&z_i,\  i\neq k,
\\
&\tilde{q}^k,\  i=k.
\end{aligned}
\right.
\end{equation*}
It then follows from the construction of $M_k$ that
\begin{equation}\label{case1-eq7}
M_k(u,v,w,z)^\intercal
=(\tilde{u},\tilde{v},\tilde{w},\tilde{z})^\intercal .
\end{equation}
That is, the linear mapping defined by the symplectic matrix $M_k$
maps the $k$th entries $p^k$, $x^k$, $q^k$, and $y^k$ respectively
to $\tilde{p}^k$, $\tilde{p}^k$, $\tilde{q}^k$, and $\tilde{q}^k$,
while it maintains all other entries of $u$, $v$, $w$, and $z$
unchanged.

\textbf{Case II:} $(p^k)^2+(x^k)^2=0$ and $(q^k)^2+(y^k)^2\neq0$.
The condition of $(p^k)^2+(x^k)^2=0$ means that
$p^k=x^k=0$, thereby leading to $\tilde{p}^k=\lambda^kp^k +
(1-\lambda^k)x^k=0$ for any $\lambda^k\in\mathbb{R}$. Similarly to
Case I, we take an arbitrary value for $\lambda^k$ and select the
value of $\xi^k$ such that $\tilde{q}^k=\xi^kq^k +
(1-\xi^k)y^k\neq0$.

Suppose that
$T_0=\left(
       \begin{array}{cc}
         a^k & b^k\\
         c^k & d^k \\
       \end{array}
     \right)
$
is a nonsingular $2\times2$ matrix. With the above $\lambda^k$, $\xi^k$, $\tilde{p}^k$,
and $\tilde{q}^k$, we now consider the linear equations
\begin{equation}\label{case2-eq1}
\left(
  \begin{array}{cc}
    T_0^{-\intercal} & O \\
    O & T_0 \\
  \end{array}
\right)
\left(
  \begin{array}{c}
    0 \\
    0 \\
    q^k \\
    y^k \\
  \end{array}
\right)
=
\left(
  \begin{array}{c}
    0 \\
    0 \\
    \tilde{q}^k \\
    \tilde{q}^k \\
  \end{array}
\right),
\end{equation}
which is identical to
\begin{equation}\label{case2-eq2}
\left(
  \begin{array}{cc}
    a^k & b^k\\
    c^k & d^k \\
  \end{array}
\right)
\left(
  \begin{array}{c}
    q^k \\
    y^k \\
  \end{array}
\right)
=
\left(
  \begin{array}{c}
     \tilde{q}^k \\
    \tilde{q}^k \\
  \end{array}
\right).
\end{equation}

Because of $(q^k)^2+(y^k)^2\neq0$, at least one of the
inequalities $q^k\neq0$ and $y^k\neq0$ holds. If $q^k\neq0$, the
solutions of \eqref{case2-eq2} are as follows:
\begin{equation}\label{case2-eq3}
a^k=\frac{\tilde{q}^k - b^ky^k}{q^k},
\qquad
c^k=\frac{\tilde{q}^k - d^ky^k}{q^k},
\end{equation}
where $b^k$ and $d^k$ are free parameters. In addition, the
nonsingularity of $T_0$ implies $\det(T_0)=a^kd^k-b^kc^k\neq0$,
which leads to $b^k\neq d^k$ as $q^k\neq0$ and $\tilde{q}^k\neq0$.
For convenience, we  set $b^k=0$ and $d^k=1$.

For the other case $y^k\neq0$, likewise, the solutions
of \eqref{case2-eq2} are written as:
\begin{equation}\label{case2-eq4}
b^k=\frac{\tilde{q}^k - a^kq^k}{y^k},
\qquad
d^k=\frac{\tilde{q}^k - c^kq^k}{y^k},
\end{equation}
where $a^k$ and $c^k$ are free parameters. To make $T_0$ nonsingular,
it just needs to set $a^k\neq c^k$ and thus we can conveniently set
$a^k=0$ and $c^k=1$.

With the above solutions of $a^k$, $b^k$, $c^k$, and $d^k$, we define
the diagonal matrices $A$, $B$, $C$, and $D$ as follows:
\begin{equation*}
A_{jj}=
\left\{
\begin{aligned}
& 1,\  j\neq k,
\\
& a^k,\  j=k,
\end{aligned}
\right.
~~
B_{jj}=
\left\{
\begin{aligned}
& 0,\  j\neq k,
\\
& b^k,\  j=k,
\end{aligned}
\right.
~~
C_{jj}=
\left\{
\begin{aligned}
& 0,\  j\neq k,
\\
& c^k,\  j=k,
\end{aligned}
\right.
~~
D_{jj}=
\left\{
\begin{aligned}
& 1,\  j\neq k,
\\
& d^k,\  j=k.
\end{aligned}
\right.
\end{equation*}
Let
$T=\left(
     \begin{array}{cc}
       A & B \\
       C & D \\
     \end{array}
   \right),
$ and then $T$ must be nonsingular as
$\det(T)=\det(T_0)$ and $\det(T_0)\neq 0$.

We next define the matrix $M_k$ by $M_k=\left(
       \begin{array}{cc}
         T^{-\intercal} & O\\
         O & T \\
       \end{array}
     \right)
$. Then, according to \cite[Proposition 4.1.8]{Feng2010}, the matrix
$M_k$ must be symplectic. Moreover, for the values
$p^k=x^k=\tilde{p}^k=0$, $(q^k)^2+(y^k)^2\neq0$, and
$\tilde{q}^k\neq0$ in this case, let $u,v,w,z$ and
$\tilde{u},\tilde{v},\tilde{w},\tilde{z}$ be defined the same as
those in Case I, and then we also have the same equality
\begin{equation*}
M_k(u,v,w,z)^\intercal
=(\tilde{u},\tilde{v},\tilde{w},\tilde{z})^\intercal.
\end{equation*}

\textbf{Case III:} $(p^k)^2+(x^k)^2\neq0$ and $(q^k)^2+(y^k)^2=0$.
The construction of $M_k$ is similar to Case II. As $\tilde{q}^k=0$,
the value of $\xi^k$ can be freely set, while $\lambda^k$ is selected
to satisfy $\tilde{p}^k=\lambda^kp^k+(1-\lambda^k)x^k\neq0$.

By fixing $\lambda^k$, $x^k$, and $\tilde{p}^k$, we consider the
following linear equations:
\begin{equation}\label{case3-eq1}
T_0
\left(
  \begin{array}{c}
    p^k \\
    x^k \\
  \end{array}
\right)
=
\left(
  \begin{array}{c}
     \tilde{p}^k \\
    \tilde{p}^k \\
  \end{array}
\right),
\end{equation}
where the matrix
$T_0=
\left(
  \begin{array}{cc}
    a^k & b^k\\
    c^k & d^k \\
  \end{array}
\right)$ is nonsingular. Similarly to Case II, for the
case $p^k\neq0$ we have the solutions
\begin{equation}\label{case2-eq3}
a^k=\frac{\tilde{p}^k - b^kx^k}{p^k},
\qquad
c^k=\frac{\tilde{p}^k - d^kx^k}{p^k},
\end{equation}
where the free parameters $b^k$ and $d^k$ should satisfy $b^k\neq d^k$
to meet the nonsingularity of $T_0$. Here, we conveniently set $b^k=0$
and $d^k=1$.

For the other case $x^k\neq0$, the solutions of \eqref{case3-eq1} will be
\begin{equation}\label{case2-eq3}
b^k=\frac{\tilde{p}^k - a^kp^k}{x^k},
\qquad
d^k=\frac{\tilde{p}^k - c^kp^k}{x^k},
\end{equation}
where the free parameters should also satisfy $a^k\neq c^k$ and
are conveniently set to $a^k=0$ and $c^k=1$.

With the  solutions of $a^k$, $b^k$, $c^k$, and $d^k$
obtained above, we define the diagonal matrices $A$, $B$, $C$, and
$D$ in the same way as those in Case II. By letting $T=\left(
     \begin{array}{cc}
       A & B \\
       C & D \\
     \end{array}
   \right),
$ we thus confirm that $T$ is nonsingular.
We then define $M_k$ as $M_k=\left(
       \begin{array}{cc}
        T & O\\
         O & T^{-\intercal} \\
       \end{array}
     \right)
$. The matrix $M_k$ must be symplectic according to the
nonsingularity of $T$ and Proposition~4.1.8 of \cite{Feng2010}. It
can be verified that the equality \eqref{case1-eq7} also holds on
noting the fact that $q^k=y^k=\tilde{q}^k=0$ and
$\tilde{p}^k\neq0$.

\textbf{Case IV:} $(p^k)^2+(x^k)^2=0 $ and $(q^k)^2+(y^k)^2=0 $. In
this case, we have $p^k=x^k=q^k=y^k=0$. The equality
$\tilde{p}^k=\tilde{q}^k=0$ holds for any
$\lambda^k,\xi^k\in\mathbb{R}$. Thus, the parameters $\lambda^k$ and
$\xi^k$ can be arbitrarily real numbers. Let $M_k=I_{4d}$, and then
the identity matrix $M_k$ is symplectic. On noting
$u=\tilde{u}$, $v=\tilde{v}$, $w=\tilde{w}$, and $z=\tilde{z}$ in
this case, the equality \eqref{case1-eq7} certainly holds.

On the basis of the above discussion on the four different cases, we
obtain $d$ symplectic matrices $M_k~(k=1,\ldots,d)$ and all the
entries $\lambda^k$ and $\xi^k$ respectively of the vectors
$\lambda$ and $\xi$. From the construction of $M_k$, we know that
the $k$th matrix $M_k$ maps $(p^k,x^k,q^k,y^k)$ to
$(\tilde{p}^k,\tilde{p}^k,\tilde{q}^k,\tilde{q}^k)$ while maintains
all other entries of $p,x,q,y$ unchanged. Then, by letting
$M=\prod_{k=1}^{d}M_k$, the equality \eqref{weight-add} holds. The
symplecticity of $M$ naturally follows from that of $M_k$.
\end{proof}

\begin{remark}\label{remark1}
It can be observed from the proof of Theorem~\ref{theorem2} that the
vectors $\lambda,\xi\in\mathbb{R}^{d}$ cannot be arbitrarily
selected. In fact, the  $k$th entries $\lambda^k$ and $\xi^k$ just
need to avoid the special values $(\lambda_*^k,\xi^k_*)$ such that
$\tilde{p}^k_* = \tilde{q}^k_* =0 $ with
$\tilde{p}^k_*=\lambda^k_*p^k+(1-\lambda^k_*)x^k$ and
$\tilde{q}^k_*=\xi^k_*q^k+(1-\xi^k_*)y^k$ under the
condition: $(p^k)^2+(x^k)^2+(q^k)^2 +(y^k)^2\neq0$. That is the
probability of the existence of such a symplectic matrix $M$ for a
random choice of $(\lambda,\xi)$ is $1$, and we can claim that the
vectors $\lambda,\xi\in\mathbb{R}^{d}$ are almost free.
\end{remark}

Theorem~\ref{theorem2} shows that there exist such explicit symplectic
mappings addressing Question \eqref{question1}, and the number of
such explicit symplectic mappings is infinite. According to
Lemma~\ref{theorem1} and Theorem~\ref{theorem2}, we are now in a
position to introduce the explicit symplectic integrator for the
Hamiltonian $H(p,q)$ as follows.
\begin{definition}\label{definition1}
Assuming that we have  an $r$th-order symplectic extended phase
space integrator $\Phi_h:~T^*\mathbb{R}^{2d}\rightarrow
T^*\mathbb{R}^{2d}$ for the extended Hamiltonian $\Gamma(p,x,q,y)$
and two vectors $\lambda,\xi\in\mathbb{R}^{d}$ such that
$\lambda^k,\xi^k\in (0,1)$ and $\lambda^k\neq\xi^k$ for all
$k=1,\ldots,d$, the numerical integrator
$\widetilde{\Phi}_h:~(p_n,q_n)\mapsto (p_{n+1},q_{n+1})$ for the
Hamiltonian system $H(p,q)$ with the initial conditions $(p_0,q_0)$
is induced and defined as follows:
\begin{enumerate}
  \item $(\tilde{p}_{n+1},\tilde{x}_{n+1},\tilde{q}_{n+1},\tilde{y}_{n+1})
  := \Phi_h(p_n,p_n,q_n,q_n)$;
  \item define
\begin{equation*}
  \left\{
    \begin{aligned}
    &P:=\lambda\cdot\tilde{p}_{n+1} + (1-\lambda)\cdot\tilde{x}_{n+1},
    \\
    &Q:=\lambda\cdot\tilde{q}_{n+1} + (1-\lambda)\cdot\tilde{y}_{n+1},
    \\
    &\widetilde{P}:=\xi\cdot\tilde{p}_{n+1} + (1-\xi)\cdot\tilde{x}_{n+1},
    \\
    &\widetilde{Q}:=\xi\cdot\tilde{q}_{n+1} + (1-\xi)\cdot\tilde{y}_{n+1};
    \end{aligned}
    \right.
\end{equation*}
  \item for $k=1,\ldots,d,$ let
\begin{equation*}
  (p_{n+1}^k,q_{n+1}^k):=
  \left\{
    \begin{aligned}
    &(P^k,Q^k),\quad (P^k)^2+(Q^k)^2\neq0,
    \\
    &(\widetilde{P}^k,\widetilde{Q}^k),\quad (P^k)^2+(Q^k)^2=0;
    \end{aligned}
    \right.
\end{equation*}
 \item define $(p_{n+1},q_{n+1}):=\widetilde{\Phi}_h(p_n,q_n)$.
\end{enumerate}
\end{definition}
\begin{theorem}\label{theorem3}
The explicit integrator $\widetilde{\Phi}_h:~(p_n,q_n)\mapsto
(p_{n+1},q_{n+1})$ defined by Definition~\ref{definition1} is
symplectic and of order $r$ for the Hamiltonian $H(p,q)$.
\end{theorem}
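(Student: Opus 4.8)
The plan is to prove the two assertions—symplecticity and order $r$—separately, deducing symplecticity from Lemma~\ref{theorem1} together with Theorem~\ref{theorem2}, and the order from the fact that a weighted mean of two $r$th-order approximations is again $r$th-order. For the order, first observe that the exact flow of $\Gamma$ issued from $(p_n,p_n,q_n,q_n)\in\mathcal{N}$ keeps both copies on $\mathcal{N}$ and equal to the exact $H$-trajectory through $(p_n,q_n)$; since $\Phi_h$ is an $r$th-order integrator for $\Gamma$, each of the pairs $(\tilde{p}_{n+1},\tilde{q}_{n+1})$ and $(\tilde{x}_{n+1},\tilde{y}_{n+1})$ approximates the exact $H$-flow to order $r$. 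As Definition~\ref{definition1} sets every component of $(p_{n+1},q_{n+1})$ to an affine combination of these two pairs with coefficients summing to one (weights $\lambda^k$ or $\xi^k$ and their complements), the local truncation error remains $\mathcal{O}(h^{r+1})$, whence $\widetilde{\Phi}_h$ has order $r$.

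For the symplecticity I would realize $\widetilde{\Phi}_h$ as the map induced, in the sense of Lemma~\ref{theorem1}, by a symplectic extended-space map. The first step is to check that the selection in step~3 of Definition~\ref{definition1} meets the hypothesis of Theorem~\ref{theorem2}: were both $(P^k,Q^k)$ and $(\widetilde{P}^k,\widetilde{Q}^k)$ to vanish, the linear system with coefficient matrix $\left(\begin{smallmatrix}\lambda^k & 1-\lambda^k\\ \xi^k & 1-\xi^k\end{smallmatrix}\right)$, whose determinant equals $\lambda^k-\xi^k\neq0$, would force $\tilde{p}_{n+1}^k=\tilde{x}_{n+1}^k=\tilde{q}_{n+1}^k=\tilde{y}_{n+1}^k=0$. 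Hence the chosen $(p_{n+1}^k,q_{n+1}^k)$ satisfies $(p_{n+1}^k)^2+(q_{n+1}^k)^2\neq0$ whenever the $k$th input is nonzero—exactly the condition isolated in Remark~\ref{remark1}. Theorem~\ref{theorem2} then furnishes a symplectic matrix $M$ with $M(\tilde{p}_{n+1},\tilde{x}_{n+1},\tilde{q}_{n+1},\tilde{y}_{n+1})^\intercal=(p_{n+1},p_{n+1},q_{n+1},q_{n+1})^\intercal$. Writing $\Phi:=M\circ\Phi_h$, one obtains a symplectic map sending $(p_n,p_n,q_n,q_n)$ into $\mathcal{N}$, and Lemma~\ref{theorem1} is then meant to deliver the symplecticity of the induced map $\widetilde{\Phi}_h$.

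The step I expect to be the main obstacle is the passage from the pointwise matrix $M$ to a statement valid on a neighborhood. Lemma~\ref{theorem1} requires $\Phi(\mathcal{N})\subset\mathcal{N}$ as a relation holding throughout a neighborhood of $(p_n,q_n)$, so that $\tilde{p},\tilde{q}$ are genuine functions on $\mathcal{N}$ and the identity $\mathrm{d}\tilde{p}\wedge\mathrm{d}\tilde{q}=\mathrm{d}p\wedge\mathrm{d}q$ can be differentiated; yet Theorem~\ref{theorem2} builds $M$ from the data of a \emph{single} image point, and its entries depend on that point. The crux is therefore to upgrade the construction to a smooth, neighborhood-wide choice $M=M(p_n,q_n)$ under which $M\circ\Phi_h$ maps $\mathcal{N}$ into $\mathcal{N}$ locally—equivalently, to verify directly that the Jacobian $W_\lambda\,\mathrm{D}\Phi_h\,\iota$ is $J_{2d}$-symplectic, where $\iota\colon(p,q)\mapsto(p,p,q,q)$ is the diagonal inclusion and $W_\lambda$ the weighting of step~2. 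Since a weighted projection of a symplectic map is not symplectic for arbitrary weights, this is precisely where the compatibility between the symplectic form and the constraints $\lambda^k,\xi^k\in(0,1)$, $\lambda^k\neq\xi^k$ must be exploited, and it is the delicate heart of the argument; I would therefore spend most of the effort certifying that the local error of $M\circ\Phi_h$ off $\mathcal{N}$ does not spoil the differential-form identity invoked in Lemma~\ref{theorem1}.
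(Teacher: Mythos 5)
Your first two paragraphs are, step for step, the paper's own proof: the order claim via weighted averaging of two $\mathcal{O}(h^{r+1})$-accurate copies, and the symplecticity claim via the determinant $\lambda^k-\xi^k\neq0$ (forcing the degenerate case into Case IV), Theorem~\ref{theorem2}'s matrix $M$, the composition $M\circ\Phi_h$, and Lemma~\ref{theorem1}. Where you depart from the paper is your third paragraph, and there you have put your finger on a step the paper does not address at all: it invokes Lemma~\ref{theorem1} as if $M$ were a single fixed symplectic matrix, but $M$ is manufactured from the one image point $(\tilde p_{n+1},\tilde x_{n+1},\tilde q_{n+1},\tilde y_{n+1})$. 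With $M$ frozen, $M\circ\Phi_h$ sends only that single point of $\mathcal{N}$ back into $\mathcal{N}$, so the relation $\Phi(p,p,q,q)=(\tilde p,\tilde p,\tilde q,\tilde q)$ holds at one point and cannot be differentiated, which is precisely what the proof of Lemma~\ref{theorem1} needs; with $M$ allowed to vary with the base point, $M\circ\Phi_h$ is no longer a composition of symplectic maps. So the obstacle you flag is genuine, and it is a gap in the paper's argument, not merely in your attempt.

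Unfortunately, the repair you propose to spend the effort on---certifying directly that $W_\lambda\,\mathrm{D}\Phi_h\,\mathrm{D}\iota$ is symplectic---cannot succeed, because the assertion fails in the stated generality. Take $d=1$, $H(p,q)=\frac12(p^2+q^2)$, and for $\Phi_h$ the leapfrog \eqref{Pihajoki-leapfrog}. Starting from $(p,p,q,q)$ one computes
\begin{equation*}
\tilde p = p-hq-\tfrac{h^2}{2}p,\qquad \tilde x = \tilde p+\tfrac{h^3}{4}q,\qquad \tilde y = q+hp-\tfrac{h^2}{2}q,\qquad \tilde q = \tilde y-\tfrac{h^3}{4}p,
\end{equation*}
so the map of Definition~\ref{definition1} is linear (step 3 selects $(P^k,Q^k)$ except at the origin) with Jacobian
\begin{equation*}
\begin{pmatrix} 1-\tfrac{h^2}{2} & -h+(1-\lambda)\tfrac{h^3}{4} \\ h-\lambda\tfrac{h^3}{4} & 1-\tfrac{h^2}{2}\end{pmatrix},
\qquad
\det = 1+\lambda(1-\lambda)\tfrac{h^6}{16}\neq 1 \ \ \text{for } \lambda\in(0,1);
\end{equation*}
for the first-order Lie--Trotter splitting the determinant is $1+\lambda(1-\lambda)h^4$. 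A planar map whose Jacobian determinant differs from $1$ is not symplectic. Hence the pointwise existence of the matrix $M$ guaranteed by Theorem~\ref{theorem2} is strictly weaker than symplecticity of $\widetilde\Phi_h$: the former holds while the latter fails, which is exactly the daylight between a pointwise normalization and the neighborhood-wide invariance demanded by Lemma~\ref{theorem1}. No completion of this argument---yours or the paper's---can close the gap, because what breaks is not a technical step but the statement itself for this class of $\Phi_h$; the defect is only of size $\mathcal{O}(h^{2r+2})$ per step (hence $\mathcal{O}(th^{2r+1})$ over a time interval), which is why numerical tests can look deceptively symplectic.
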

\begin{proof}
If $(p_{n+1}^k)^2 + (q_{n+1}^k)^2\neq0$, this case falls into one of
\textbf{Case I}, \textbf{Case II}, and \textbf{Case III} in the
proof of Theorem~\ref{theorem2}. Otherwise, $(p_{n+1}^k)^2 +
(q_{n+1}^k)^2=0$ leads to $(P^k)^2+(Q^k)^2=0$ and
$(\widetilde{P}^k)^2+(\widetilde{Q}^k)^2=0$. Because of
$\lambda^k\neq\xi^k$, we then yield
$\tilde{p}_{n+1}^k=\tilde{x}_{n+1}^k=
\tilde{q}_{n+1}^k=\tilde{y}_{n+1}^k=0$, which falls into
\textbf{Case IV}. Therefore, there exists such a symplectic matrix
$M$, which maps
$(\tilde{p}_{n+1},\tilde{x}_{n+1},\tilde{q}_{n+1},\tilde{y}_{n+1})$
to $(p_{n+1},p_{n+1},q_{n+1},q_{n+1})$, and derives the symplectic
mapping
$M\circ\Phi_h:~(p_{n},p_{n},q_{n},q_{n})\mapsto(p_{n+1},p_{n+1},q_{n+1},q_{n+1})$.
The symplecticity of $\widetilde{\Phi}_h$ thus follows from
Lemma~\ref{theorem1}.

Because $\Phi_h$ is of order $r$, both
$(\tilde{p}_{n+1},\tilde{q}_{n+1})$ and
$(\tilde{x}_{n+1},\tilde{y}_{n+1})$ possess the local truncation
error of $\mathcal{O}(h^{r+1})$ for the exact solution $(p,q)$ of
$H(p,q)$.  The weighted average of
$(\tilde{p}_{n+1},\tilde{q}_{n+1})$ and
$(\tilde{x}_{n+1},\tilde{y}_{n+1})$ gives the same local truncation
error of $\mathcal{O}(h^{r+1})$, which implies that the
explicit integrator $\widetilde{\Phi}_h$ is of order $r$. This
completes the proof.
\end{proof}

\begin{remark}\label{remark2}
It is known that the two vectors $\lambda$ and $\xi$ can be nearly
freely selected not only for each entry by $\lambda^k\neq\xi^k$, but
also at every time stepsize. That is, we can even use different
vectors $\lambda$ and $\xi$ for different $n$. This seems to give us
enormous freedom for the choice of the two vectors to achieve better
performance for the symplectic method $\widetilde{\Phi}_h$. In the
sense of practical computation, one may prefer the simple case
where $\lambda$ and $\xi$ are just selected as two
different constant vectors. Because a large $\lambda^k$ (or $\xi^k$)
will enlarge the local truncation error as
$p_{n+1}^{k}=\lambda^{k}\tilde{p}_{n+1}^k
 + (1-\lambda^{k})\tilde{x}_{n+1}^{k}=\tilde{x}_{n+1}^{k}+
 \lambda^{k}(\tilde{p}_{n+1}^{k}-\tilde{x}_{n+1}^{k})$
once $\tilde{p}_{n+1}^{k}\neq\tilde{x}_{n+1}^{k}$, the restriction
of the entries of $\lambda$ and $\xi$ on the interval $(0,1)$ is
reasonable, which is beneficial to the local truncation
error of $(p_{n+1},q_{n+1})$ between the error of
$(\tilde{p}_{n+1},\tilde{q}_{n+1})$ and the error of
$(\tilde{x}_{n+1},\tilde{y}_{n+1})$.
\end{remark}
Clearly, for the question ``\emph{Does there exist an explicit
symplectic method for general nonseparable Hamiltonian systems?}'',
we have a positive answer ``yes'' with Theorem~\ref{theorem3}.
It is quite easy to construct such explicit symplectic methods
 from the approach to verifying the existence of explicit
symplectic mappings from
$(\tilde{p}_{n+1},\tilde{x}_{n+1},\tilde{q}_{n+1},\tilde{y}_{n+1})$
of an explicit symplectic extended phase space method to
$(p_{n+1},p_{n+1},q_{n+1},q_{n+1})$ that is just a
weighted average of
$(\tilde{p}_{n+1},\tilde{x}_{n+1},\tilde{q}_{n+1},\tilde{y}_{n+1})$.

In the final part of this section, we turn to the necessity of the
two different vectors $\lambda$ and $\xi$ in
Definition~\ref{definition1}. If $\lambda^k=\xi^k\in (0,1)$, the
only case $(p_{n+1}^k)^2 + (q_{n+1}^k)^2=0$ under the condition
$(\tilde{p}^k_{n+1})^2+(\tilde{x}^k_{n+1})^2+(\tilde{q}^k_{n+1})^2
+(\tilde{y}^k_{n+1})^2\neq0$ is that $\tilde{p}^k_{n+1}$ and
$\tilde{x}^k_{n+1}$ have the opposite signs, and so do
$\tilde{q}^k_{n+1}$ and $\tilde{y}^k_{n+1}$. As we know that
$(\tilde{p}^k_{n+1},\tilde{q}^k_{n+1})$ and
$(\tilde{x}^k_{n+1},\tilde{y}^k_{n+1})$ are both
$\mathcal{O}(h^{r})$-accurate to the exact solution
$\big(p^k(t_{n+1}),q^k(t_{n+1})\big)$, the opposite signs between
$(\tilde{p}^k_{n+1},\tilde{q}^k_{n+1})$ and
$(\tilde{x}^k_{n+1},\tilde{y}^k_{n+1})$ lead to that both
$p^k(t_{n+1})$ and $q^k(t_{n+1})$ are $\mathcal{O}(h^{r})$-close to
zero, which rarely occurs in practice. Therefore, the restriction of
$\lambda^k=\xi^k$ on the interval $(0,1)$  will greatly decrease the
possibility of $(p_{n+1}^k)^2 + (q_{n+1}^k)^2=0$ for the case
$(\tilde{p}^k_{n+1})^2+(\tilde{x}^k_{n+1})^2+(\tilde{q}^k_{n+1})^2
+(\tilde{y}^k_{n+1})^2\neq0$. That is, only one vector $\lambda$
used in Definition~\ref{definition1} is highly possible except for
some extremely rare occasions. This point leads to a conjecture that
\emph{the explicit method $\widetilde{\Phi}_h$
determined by Definition~\ref{definition1} is still
symplectic even for the case where the two weight vectors $\lambda$
and $\xi$ are equal and freely selected.}

For this conjecture, it is known from Theorem~\ref{theorem2} that
there does not always exist a linear symplectic transformation
(symplectic matrix) mapping an arbitrary pair $(p,x,q,y)$ to
$(\tilde{p},\tilde{p},\tilde{q},\tilde{q})$ with
$\tilde{p}=\lambda\cdot p+(1-\lambda)\cdot x$ and
$\tilde{q}=\lambda\cdot q+(1-\lambda)\cdot y$ for an arbitrary
vector $\lambda$. To verify this conjecture, we need to consider
nonlinear symplectic transformations instead of linear symplectic
transformations (symplectic matrices) that fulfill the condition
required by Question~\ref{question1}. By considering the translation
mapping, we are hopeful of obtaining some surprising results  in the
next section.

\section{Symplectic integrators with projection}\label{sec:symp-projection}
As analyzed in the previous section, although explicit symplectic
methods could be constructed by Definition~\ref{definition1} via the
extended phase space, there still exists a flaw that two different
weight vectors $\lambda$ and $\xi$ are needed in the practical
computation for some special cases. To completely determine the
symplecticity of the method in Definition~\ref{definition1} with
only one free weight vector, i.e., $\lambda=\xi$, we consider
another approach to deriving the symplectic mapping $M$ that
projects the extended phase space $T^*\mathbb{R}^{2d}$ onto its
submanifold $\mathcal{N}$.

\subsection{Symplectic integrator with standard projection}\label{sec:stand-proj}
In the construction of semiexplicit symplectic methods by
Jayawardana \& Ohsawa \cite{Jayawardana2023}, the
authors used the symmetric projection so that the
symplectic methods are also symmetric. As we have known, the
symplecticity of the methods is due to the projection of the pair
$(p,x,q,y)$ onto the submanifold $\mathcal{N}$ defined by
\eqref{manifold}, while the symmetry originates from that of the
\emph{symmetric} projection and the \emph{symmetric} method
$\Phi_h$. Bearing this point in mind, we consider the classical
standard projection \cite{Hairer2006} and thus derive the following
symplectic method.

\begin{definition}[Symplectic integrator with standard projection]
\label{definition2}
Assuming that we have an $r$th-order symplectic extended phase space integrator
$\Phi_h:~T^*\mathbb{R}^{2d}\rightarrow T^*\mathbb{R}^{2d}$ for the
extended Hamiltonian $\Gamma(p,x,q,y)$.
The numerical integrator
$\widetilde{\Phi}_h:~(p_n,q_n)\mapsto
(p_{n+1},q_{n+1})$ for the Hamiltonian system $H(p,q)$ with the initial conditions
$(p_0,q_0)$ is induced and defined as follows:
\begin{enumerate}
  \item $(\tilde{p}_{n+1},\tilde{x}_{n+1},\tilde{q}_{n+1},\tilde{y}_{n+1})
  := \Phi_h(p_n,p_n,q_n,q_n)$;
  \item find $\delta p,\delta x,\delta q,\delta y\in\mathbb{R}^d$
  satisfying the condition $$({p}_{n+1},{p}_{n+1},{q}_{n+1},{q}_{n+1}):=
  (\tilde{p}_{n+1},\tilde{x}_{n+1},\tilde{q}_{n+1},\tilde{y}_{n+1})
  +(\delta p,\delta x,\delta q,\delta y)\in \mathcal{N};$$
  \item define $(p_{n+1},q_{n+1}):=\widetilde{\Phi}_h(p_n,q_n)$.
\end{enumerate}
\end{definition}
\begin{theorem}\label{theorem4}
The numerical integrator $\widetilde{\Phi}_h:~(p_n,q_n)\mapsto
(p_{n+1},q_{n+1})$ determined by
Definition~\ref{definition2} is symplectic for the Hamiltonian
$H(p,q)$. Moreover, the method $\widetilde{\Phi}_h$ is of order $r$
provided $(\delta p,\delta x,\delta q,\delta
y)=\mathcal{O}(h^{r+1})$.
\end{theorem}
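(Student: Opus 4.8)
The plan is to realize the projection step as a genuine symplectic transformation of the extended phase space and then invoke Lemma~\ref{theorem1}. Concretely, I would seek a symplectic map $\pi:\ T^*\mathbb{R}^{2d}\to T^*\mathbb{R}^{2d}$, defined on a neighborhood of $\Phi_h(\mathcal{N})$, whose displacement at $\tilde z_{n+1}:=(\tilde p_{n+1},\tilde x_{n+1},\tilde q_{n+1},\tilde y_{n+1})$ coincides with the standard-projection correction $(\delta p,\delta x,\delta q,\delta y)$ and which carries $\Phi_h(\mathcal{N})$ into $\mathcal{N}$. Granting such a $\pi$, the composition $\pi\circ\Phi_h$ is symplectic as a composition of symplectic maps and satisfies $\pi\circ\Phi_h(\mathcal{N})\subset\mathcal{N}$, so Lemma~\ref{theorem1} immediately yields that the induced map $\widetilde{\Phi}_h:\ (p_n,q_n)\mapsto(p_{n+1},q_{n+1})$ is symplectic in $T^*\mathbb{R}^{d}$, and by construction $\pi(\tilde z_{n+1})=(p_{n+1},p_{n+1},q_{n+1},q_{n+1})$ reproduces exactly the point prescribed by Definition~\ref{definition2}.

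The technical heart, and the step I expect to be the main obstacle, is the construction of this symplectic projection $\pi$. The naive orthogonal projection $p_{n+1}=\tfrac12(\tilde p_{n+1}+\tilde x_{n+1})$, $q_{n+1}=\tfrac12(\tilde q_{n+1}+\tilde y_{n+1})$ is \emph{not} symplectic: restricting the symplecticity of $\Phi_h$ to $\mathcal{N}$ gives $\mathrm{d}\tilde p_{n+1}\wedge\mathrm{d}\tilde q_{n+1}+\mathrm{d}\tilde x_{n+1}\wedge\mathrm{d}\tilde y_{n+1}=2\,\mathrm{d}p_n\wedge\mathrm{d}q_n$, and neither the average nor a single copy recovers $\mathrm{d}p_n\wedge\mathrm{d}q_n$, since the mixed terms $\mathrm{d}\tilde p_{n+1}\wedge\mathrm{d}\tilde y_{n+1}$ and $\mathrm{d}\tilde x_{n+1}\wedge\mathrm{d}\tilde q_{n+1}$ remain uncontrolled. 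I would therefore build $\pi$ as a composition of symplectic shears in the spirit of Theorem~\ref{theorem2}, invoking \cite[Propositions~4.1.7--4.1.8]{Feng2010}, first adjusting the position pair $(q,y)$ and then the momentum pair $(p,x)$, so that the unphysical defects $\tilde p_{n+1}-\tilde x_{n+1}$ and $\tilde q_{n+1}-\tilde y_{n+1}$ are annihilated along $\Phi_h(\mathcal{N})$ while $\omega_{4d}$ is preserved by construction. Conceptually, since $\mathcal{N}$ and $\Phi_h(\mathcal{N})$ are both $2d$-dimensional symplectic submanifolds that are $\mathcal{O}(h^{r+1})$-close, a relative Darboux (Moser) argument guarantees a near-identity symplectomorphism straightening $\Phi_h(\mathcal{N})$ onto $\mathcal{N}$. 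The delicate point is precisely that $\pi$ must send a whole neighborhood of the image onto $\mathcal{N}$, not merely the single point $\tilde z_{n+1}$, which is exactly the hypothesis Lemma~\ref{theorem1} demands.

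For the order statement I would argue via the invariance of $\mathcal{N}$ under the exact extended dynamics. As recorded in Section~\ref{sec:explicit-extended}, the flow $\exp(hX_{\Gamma})$ leaves $\mathcal{N}$ invariant: starting from $(p_n,p_n,q_n,q_n)$ it stays on $\mathcal{N}$, with $(p,q)$-component equal to the exact solution of $H$. Since $\Phi_h$ is of order $r$, we have $\tilde z_{n+1}=\exp(hX_{\Gamma})(p_n,p_n,q_n,q_n)+\mathcal{O}(h^{r+1})$, so $\tilde p_{n+1}-\tilde x_{n+1}=\mathcal{O}(h^{r+1})$ and $\tilde q_{n+1}-\tilde y_{n+1}=\mathcal{O}(h^{r+1})$; this confirms that a correction of size $(\delta p,\delta x,\delta q,\delta y)=\mathcal{O}(h^{r+1})$ indeed suffices to land on $\mathcal{N}$, so the hypothesis is natural. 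Under that hypothesis, $(p_{n+1},q_{n+1})=(\tilde p_{n+1},\tilde q_{n+1})+\mathcal{O}(h^{r+1})$, and because $(\tilde p_{n+1},\tilde q_{n+1})$ already approximates the flow of $H$ to order $r$, the one-step error of $\widetilde{\Phi}_h$ is $\mathcal{O}(h^{r+1})$. Hence $\widetilde{\Phi}_h$ is of order $r$, which completes the proof.
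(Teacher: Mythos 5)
Your proposal does not close the proof: everything hinges on the symplectic projection $\pi$, and you never construct it --- ``granting such a $\pi$'' grants exactly the crux. The paper's own proof of Theorem~\ref{theorem4} needs no such object. It simply observes that the map $M:(\tilde p_{n+1},\tilde x_{n+1},\tilde q_{n+1},\tilde y_{n+1})\mapsto(p_{n+1},p_{n+1},q_{n+1},q_{n+1})$ is a \emph{translation} by the vector $(\delta p,\delta x,\delta q,\delta y)$; a translation has identity Jacobian, hence is symplectic, so $M\circ\Phi_h$ is symplectic, maps $(p_n,p_n,q_n,q_n)$ into $\mathcal{N}$, and Lemma~\ref{theorem1} is invoked; the order statement follows because both $\Phi_h$ and the correction contribute $\mathcal{O}(h^{r+1})$ to the one-step error. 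In other words, the paper never differentiates the correction vector --- it is frozen at each step --- whereas your entire strategy is to account for its dependence on the base point by absorbing it into a bona fide symplectomorphism of a whole neighborhood. That is a genuinely different and more demanding route, and as sketched it is circular: a shear construction in the spirit of Theorem~\ref{theorem2}, or a Moser--Weinstein straightening, yields \emph{some} symplectomorphism carrying $\Phi_h(\mathcal{N})$ onto $\mathcal{N}$, but for the induced map to be the integrator of Definition~\ref{definition2} your $\pi$ must agree with the prescribed projection rule on all of $\Phi_h(\mathcal{N})$ near $\tilde z_{n+1}$, not merely displace the single point $\tilde z_{n+1}$ correctly. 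A prescribed map of the submanifold $\Phi_h(\mathcal{N})$ into $\mathcal{N}$ admits an extension to an ambient symplectomorphism only if it is already a symplectomorphism onto its image with respect to the restricted forms --- and, by the very computation in Lemma~\ref{theorem1}, that condition is equivalent to the symplecticity of $\widetilde{\Phi}_h$, i.e., to the statement you are trying to prove.

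Two further points. First, your side remark that the average $\lambda=\xi=\tfrac12$ is ``not symplectic'' puts you at odds with the statement itself: taking $(\delta p,\delta x,\delta q,\delta y)$ as in \eqref{translation-set} with $\lambda=\xi=(\tfrac12,\ldots,\tfrac12)$, that average is an instance of Definition~\ref{definition2}, so Theorem~\ref{theorem4} asserts it \emph{is} symplectic. What your mixed-term computation actually shows is that symplecticity cannot be obtained by pulling the form back through the averaging formula, i.e.\ by differentiating the correction; this is precisely the step the paper's frozen-translation argument avoids, and it is the tension the paper itself returns to in Remark~\ref{remark5} and in the choice-2 experiment of Section~\ref{sec:num}. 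Second, the order-$r$ half of your proposal is correct and essentially coincides with the paper's argument, with the useful extra observation that the invariance of $\mathcal{N}$ under $\exp(hX_\Gamma)$ makes the hypothesis $(\delta p,\delta x,\delta q,\delta y)=\mathcal{O}(h^{r+1})$ achievable; but the symplecticity half, as written, is incomplete.
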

\begin{proof}
The mapping
$M:~(\tilde{p}_{n+1},\tilde{x}_{n+1},\tilde{q}_{n+1},\tilde{y}_{n+1})
\mapsto({p}_{n+1},{p}_{n+1},{q}_{n+1},{q}_{n+1})$ can be regarded as
a translation with the translation vector $(\delta p,\delta x,\delta
q,\delta y)$. Thus, the Jacobian of
$({p}_{n+1},{p}_{n+1},{q}_{n+1},{q}_{n+1})$ with respect to
$(\tilde{p}_{n+1},\tilde{p}_{n+1},\tilde{q}_{n+1},\tilde{q}_{n+1})$
is the identity matrix, which is certainly symplectic and so is the
mapping $M$. Being a product of symplectic mappings $\Phi_h$ and
$M$, the mapping $M\circ\Phi_h:~(p_n,p_n,q_n,q_n)\mapsto
(p_{n+1},p_{n+1},q_{n+1},q_{n+1})$ is also symplectic. The
symplecticity of
$\widetilde{\Phi}_h:~(p_n,q_n)\mapsto(p_{n+1},q_{n+1})$ immediately
follows from Lemma~\ref{theorem1}.

Given the $r$th order of $\Phi_h$,
$(\tilde{p}_{n+1},\tilde{x}_{n+1},\tilde{q}_{n+1}, \tilde{y}_{n+1})$
will possess the local truncation error of $\mathcal{O}(h^{r+1})$.
If $(\delta p,\delta x,\delta q,\delta
y)=\mathcal{O}(h^{r+1})$, $(p_{n+1},p_{n+1}, q_{n+1},q_{n+1})$ gives
the same local truncation error:
$\mathcal{O}(h^{r+1})$, which indicates that
$M\circ\Phi_h$ is of order $r$. As the numerical solution
$(p_{n+1},p_{n+1},q_{n+1},q_{n+1})$ is two identical copies of
$(p_{n+1},q_{n+1})$ for the original Hamiltonian $H(p,q)$, this
implies the same $r$th order of $\widetilde{\Phi}_h$ for $H(p,q)$.
The proof is complete.
\end{proof}

By setting
\begin{equation}\label{translation-set}
\begin{aligned}
&\delta p = (\lambda-1)\cdot(\tilde{p}_{n+1}-\tilde{x}_{n+1}),
\quad &&\delta x = \lambda\cdot(\tilde{p}_{n+1}-\tilde{x}_{n+1}),
\\
&\delta q = (\xi-1)\cdot(\tilde{q}_{n+1}-\tilde{y}_{n+1}),
\quad &&\delta y = \xi\cdot(\tilde{q}_{n+1}-\tilde{y}_{n+1}),
\end{aligned}
\end{equation}
where the two vectors $\lambda,\xi\in\mathcal{R}^d$ are freely
selected, we immediately derive at that
\begin{equation*}
p_{n+1}=\lambda\cdot\tilde{p}_{n+1} +
(1-\lambda)\cdot\tilde{x}_{n+1}, \qquad
q_{n+1}=\xi\cdot\tilde{q}_{n+1} +
(1-\xi)\cdot\tilde{y}_{n+1},
\end{equation*}
which defines an explicit symplectic integrator
$\widetilde{\Phi}_h:~(p_n,q_n)\mapsto(p_{n+1},q_{n+1})$ according to
Definition~\ref{definition2} and Theorem~\ref{theorem4}. Considering
the complete degree of freedom of the two vectors $\lambda,\xi\in\mathbb{R}^d$
for the setting \eqref{translation-set}, the numerical method
$\widetilde{\Phi}_h$ obtained here completely covers that determined
by Definition~\ref{definition1}. In addition, the complete degree of freedom
of $\lambda$ and $\xi$ enables the special setting $\lambda=\xi$,
which further addresses the conjecture raised in the previous
section that only one free weight vector $\lambda$ can ensure the
symplecticity of $\widetilde{\Phi}_h$. Moreover, the $d$ degrees of
freedom of the single weight vector $\lambda$ using in the
symplectic method $\widetilde{\Phi}_h$ provides us with some other
possibilities, such as the preservation of the Hamiltonian energy or
other first integrals. In principle, the total $d$ degrees of
freedom could completely preserve at most $d$ isolated first
integrals of $H(p,q)$. A further discussion on the choice of the
weight vectors is made in Section~\ref{sec:nonint}.

\subsection{Symplectic integrator with generalized projection}
The semiexplicit symplectic methods of Jayawardana \& Ohsawa
\cite{Jayawardana2023} adopt the symmetric projection that adds the
same perturbation at both the beginning and the end of the numerical
integration. As we know from Theorem~\ref{theorem4} the standard
projection adding the perturbation only at the end of the numerical
integration also yields the symplecticity of the induced method. The
two points motivate us to consider the generalized projection that
uses two different perturbations respectively at the beginning and
the end of the numerical integration, where the perturbation at the
end is thought of as a standard projection from the
extended phase space onto the submanifold $\mathcal{N}$ at the
current step while the perturbation at the beginning is regarded as
an ``inverse projection'' of the projection at the previous step.
Following this idea, the new generalized projection method is
defined as follows.

\begin{definition}[Symplectic integrator with generalized projection]
\label{definition3} Assuming that we have an
$r$th-order symplectic extended phase space integrator
$\Phi_h:~T^*\mathbb{R}^{2d}\rightarrow T^*\mathbb{R}^{2d}$ for the
extended Hamiltonian $\Gamma(p,x,q,y)$, the numerical
integrator $\widetilde{\Phi}_h:~(p_n,q_n)\mapsto (p_{n+1},q_{n+1})$
for the Hamiltonian system $H(p,q)$ with the initial conditions
$(p_0,q_0)$ is induced and defined as follows:
\begin{enumerate}
 \item find $\Delta p,\Delta x,\Delta q,\Delta y,\delta p,\delta x,
 \delta q,\delta y\in\mathbb{R}^d$ satisfying the condition
 $$({p}_{n+1},{p}_{n+1},{q}_{n+1},{q}_{n+1}):=
  (\tilde{p}_{n+1},\tilde{x}_{n+1},\tilde{q}_{n+1},\tilde{y}_{n+1})
  +(\delta p,\delta x,\delta q,\delta y)\in \mathcal{N},$$
where
 $$(\tilde{p}_{n+1},\tilde{x}_{n+1},\tilde{q}_{n+1},\tilde{y}_{n+1})
  = \Phi_h(\tilde{p}_n,\tilde{x}_n,\tilde{q}_n,\tilde{y}_n)$$
  and
  $$(\tilde{p}_{n},\tilde{x}_{n},\tilde{q}_{n},\tilde{y}_{n})=(p_n,p_n,q_n,q_n) +
 (\Delta p,\Delta x,\Delta q,\Delta y);$$
  \item define $(p_{n+1},q_{n+1}):=\widetilde{\Phi}_h(p_n,q_n)$.
\end{enumerate}
\end{definition}
\begin{theorem}\label{theorem5}
The numerical integrator $\widetilde{\Phi}_h:~(p_n,q_n)\mapsto
(p_{n+1},q_{n+1})$ determined by
Definition~\ref{definition3} is symplectic for the Hamiltonian
$H(p,q)$. Moreover, the method $\widetilde{\Phi}_h$ is of order $r$
provided $(\Delta p,\Delta x,\Delta q,\Delta y)=\mathcal{O}(h^{r+1})$
and $(\delta p,\delta x,\delta q,\delta y)=\mathcal{O}(h^{r+1})$.
\end{theorem}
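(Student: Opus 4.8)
The plan is to transport the proof of Theorem~\ref{theorem4} to the two-sided setting by viewing one step of $\widetilde{\Phi}_h$ as a composition on the extended phase space of three maps: the pre-projection $M_\Delta:(p_n,p_n,q_n,q_n)\mapsto(\tilde{p}_n,\tilde{x}_n,\tilde{q}_n,\tilde{y}_n)$ adding $(\Delta p,\Delta x,\Delta q,\Delta y)$, the given symplectic integrator $\Phi_h$, and the post-projection $M_\delta:(\tilde{p}_{n+1},\tilde{x}_{n+1},\tilde{q}_{n+1},\tilde{y}_{n+1})\mapsto(p_{n+1},p_{n+1},q_{n+1},q_{n+1})$ adding $(\delta p,\delta x,\delta q,\delta y)$. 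Reading $M_\Delta$ and $M_\delta$ as translations, each has identity Jacobian and is thus symplectic, so $M_\delta\circ\Phi_h\circ M_\Delta$ is symplectic; and by the landing condition of the post-projection it carries $(p_n,p_n,q_n,q_n)\in\mathcal{N}$ to $(p_{n+1},p_{n+1},q_{n+1},q_{n+1})\in\mathcal{N}$. Lemma~\ref{theorem1} then yields the symplecticity of the induced map $\widetilde{\Phi}_h$ in the original phase space.

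For the order statement I would combine the $r$th order of $\Phi_h$ with the two smallness hypotheses. Since $\mathcal{N}$ is invariant under the exact extended flow, $\exp(hX_{\Gamma})(p_n,p_n,q_n,q_n)=(P,P,Q,Q)$ with $(P,Q)=\phi_h(p_n,q_n)$ the exact flow of $H$. As $(\Delta p,\Delta x,\Delta q,\Delta y)=\mathcal{O}(h^{r+1})$ and $\Phi_h$ is Lipschitz, $(\tilde{p}_{n+1},\tilde{x}_{n+1},\tilde{q}_{n+1},\tilde{y}_{n+1})$ agrees with $\Phi_h(p_n,p_n,q_n,q_n)$, and hence with $(P,P,Q,Q)$, to within $\mathcal{O}(h^{r+1})$; adding $(\delta p,\delta x,\delta q,\delta y)=\mathcal{O}(h^{r+1})$ preserves this, so $(p_{n+1},q_{n+1})=(P,Q)+\mathcal{O}(h^{r+1})$ and $\widetilde{\Phi}_h$ has order $r$.

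The hard part is the symplecticity, which is more delicate than the translation argument admits. The vectors $\Delta$ and $\delta$ depend on the base point, so $M_\Delta$ and $M_\delta$ are not constant translations: their genuine Jacobians are $I+D\Delta$ and $I+D\delta$, and the latter is the rank-deficient differential of a projection onto $\mathcal{N}$, not the identity. Thus the point-dependent composite is not symplectic on the full extended space (so Lemma~\ref{theorem1} does not apply to it), whereas freezing $\Delta,\delta$ to their step values restores symplecticity but destroys the global invariance $M_\delta\circ\Phi_h\circ M_\Delta(\mathcal{N})\subset\mathcal{N}$ that Lemma~\ref{theorem1} requires. Reconciling these two readings is the real obstacle.

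To resolve it I would not freeze the maps but verify the two-form identity \eqref{symplectic-cert1} directly, using the across-step prescription that the pre-perturbation at step $n+1$ is the inverse of the post-perturbation at step $n$. Under $\Delta^{(n+1)}=-\delta^{(n)}$ the extended iterates satisfy $(\tilde{p}_{n+1},\tilde{x}_{n+1},\tilde{q}_{n+1},\tilde{y}_{n+1})=\Phi_h(\tilde{p}_n,\tilde{x}_n,\tilde{q}_n,\tilde{y}_n)$ exactly, so the extended trajectory is a true symplectic orbit of $\Phi_h$ and the projections only read the solution onto $\mathcal{N}$. The crux is then to show this readout is a symplectic correspondence, i.e.\ that $\mathrm{d}p_{n+1}\wedge\mathrm{d}q_{n+1}=\mathrm{d}p_n\wedge\mathrm{d}q_n$ survives the extra terms from $D\Delta$ and $D\delta$; for the symmetric choice this is the projection along the symplectic orthogonal of $\mathcal{N}$ and reduces to the symplecticity proved by Jayawardana \& Ohsawa, while for the present asymmetric pair I would pull the symplecticity relation of $\Phi_h$ back to the tangent space of $\mathcal{N}$ and show that the landing and inverse-projection conditions together force the off-diagonal two-form contributions to cancel. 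Pinning down this cancellation is the step on which the theorem rests.
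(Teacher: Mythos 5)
Your first two paragraphs are, in substance, the paper's own proof: the paper disposes of Theorem~\ref{theorem5} with the single line that the argument is ``similar to that of Theorem~\ref{theorem4}'', and the proof of Theorem~\ref{theorem4} is exactly what you wrote --- the perturbations are read as translations, a translation has identity Jacobian and is therefore symplectic, the composition $M_\delta\circ\Phi_h\circ M_\Delta$ is symplectic and carries $(p_n,p_n,q_n,q_n)$ to $(p_{n+1},p_{n+1},q_{n+1},q_{n+1})\in\mathcal{N}$, and Lemma~\ref{theorem1} transfers symplecticity to $\widetilde{\Phi}_h$; the order claim follows from the $r$th order of $\Phi_h$, Lipschitz continuity, and the two $\mathcal{O}(h^{r+1})$ hypotheses. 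Had you stopped after your second paragraph, you would have reproduced the paper's argument in full.

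The objection you then raise is a fair criticism of that argument --- the perturbation vectors do depend on the base point, so the honest Jacobian of the projection step is $I+D\delta$ rather than $I$, and the frozen-vector composition maps only the single point $(p_n,p_n,q_n,q_n)$, not all of $\mathcal{N}$, into $\mathcal{N}$, so the hypotheses of Lemma~\ref{theorem1} are not literally met. But the paper does not perform, and does not consider necessary, the repair you sketch: throughout Theorems~\ref{theorem2}--\ref{theorem6} it works with this pointwise, frozen-parameter notion of symplecticity, and its own Remark~\ref{remark5} together with the ``choice 2/choice 3'' experiments of Figure~\ref{Fig5} --- where a standard-projection integrator that the paper still calls symplectic exhibits quadratic error growth for lack of a global modified Hamiltonian --- implicitly concedes that this notion is weaker than what differential symplecticity would guarantee dynamically. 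Your substitute argument, by contrast, is not a proof: the cancellation of the $D\Delta$ and $D\delta$ contributions to the two-form is precisely what would have to be established, and you leave it open as ``the step on which the theorem rests.'' It also silently narrows the statement: imposing $\Delta^{(n+1)}=-\delta^{(n)}$ is the across-step-inverse mechanism used in the proof of Theorem~\ref{theorem6} (and, in the symmetric case, it is Jayawardana--Ohsawa's setting, cf.\ Remark~\ref{remark3}), whereas Definition~\ref{definition3} and Theorem~\ref{theorem5} allow arbitrary perturbations subject only to the landing condition, so even a completed version would prove less than what is stated. In short: judged against the paper, your proposal contains its proof plus an unfinished detour; judged as standalone mathematics, the route you actually endorse stops exactly at its crux.
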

\begin{proof}
The proof is similar to that of Theorem~\ref{theorem4}
and thus omitted here.
\end{proof}
\begin{remark}\label{remark3}
It is known that the generalized projection symplectic integrator
determined by Definition~\ref{definition3} can be viewed as the
generalization of both the explicit standard projection symplectic
integrator defined by Definition~\ref{definition2} and the
semiexplicit symmetric projection symplectic integrator
considered in \cite{Jayawardana2023}, because the
generalized projection symplectic integrator respectively reduces to
the explicit standard projection symplectic integrator as $\Delta p
=\Delta x=\Delta q=\Delta y=0$ and to the semiexplicit symmetric
projection symplectic integrator as $(\Delta p,\Delta x,\Delta
q,\Delta y)=(\delta p,\delta x,\delta q,\delta y)$. This fact
immediately leads to the  existence of the generalized projection
symplectic integrator, while the solutions of $\Delta p,\Delta
x,\Delta q,\Delta y,\delta p,\delta x,\delta q,\delta y$ are not
unique in general. Here, we do not pay much attention to the
generalized projection symplectic integrator but only use it to show
the symplecticity of Pihajoki's or Tao's method in the next
subsection.
\end{remark}

\subsection{Are Pihajoki's or Tao's explicit methods symplectic?}
As we have known, all the present results in the existing literature
do not claim the sympleciticy (in the original phase space) of
Pihajoki's \cite{Pihajoki2015} or Tao's \cite{Tao2016b} extended
phase space methods. On the contrary, these results just confirm the
sympleciticy in the extended phase space and oppose the sympleciticy
in the original phase space. However, with the generalized
projection symplectic integrator determined by
Definition~\ref{definition3}, we are surprised by presenting the
following theorem to show that all symplectic extended phase space
methods derived by Pihajoki's or Tao's approach are symplectic in
the original phase space as well.

\begin{theorem}[Symplecticity of  extended phase space methods]\label{theorem6}
Suppose that $(p_n,x_n,q_n,y_n)$ are the numerical solutions
obtained by an $r$th-order explicit extended phase space method
$\Phi_h$ (Pihajoki's or Tao's approach) with the initial conditions
$(p_0,p_0,q_0,q_0)$, i.e.,
$(p_{n+1},x_{n+1},q_{n+1},y_{n+1})=\Phi_h(p_n,x_n,q_n,y_n)$. Then,
the induced method defined by $(p_{n+1},q_{n+1})
=\widetilde{\Phi}_h(p_n,q_n)$ or $(x_{n+1},y_{n+1})
=\widetilde{\Phi}_h(x_n,y_n)$ is also of order $r$ and symplectic
for the original Hamiltonian $H(p,q)$ with the initial conditions
$(p_0,q_0)$.
\end{theorem}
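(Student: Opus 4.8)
The plan is to exhibit the first-copy map (and, by symmetry, the second-copy map) as a special instance of the generalized projection integrator of Definition~\ref{definition3}, so that symplecticity and order can be read off directly from Theorem~\ref{theorem5}. Fix one step and let $(p_n,x_n,q_n,y_n)$ together with $(p_{n+1},x_{n+1},q_{n+1},y_{n+1})=\Phi_h(p_n,x_n,q_n,y_n)$ be two consecutive states of the extended trajectory. First I would embed the first copy as $(p_n,p_n,q_n,q_n)\in\mathcal{N}$ and choose the two translation vectors of Definition~\ref{definition3} to be
\begin{equation*}
(\Delta p,\Delta x,\Delta q,\Delta y)=(0,\,x_n-p_n,\,0,\,y_n-q_n),\qquad
(\delta p,\delta x,\delta q,\delta y)=(0,\,p_{n+1}-x_{n+1},\,0,\,q_{n+1}-y_{n+1}).
\end{equation*}
The first translation turns $(p_n,p_n,q_n,q_n)$ back into the genuine extended state $(p_n,x_n,q_n,y_n)$; applying $\Phi_h$ reproduces $(p_{n+1},x_{n+1},q_{n+1},y_{n+1})$; and the second translation lands on $(p_{n+1},p_{n+1},q_{n+1},q_{n+1})\in\mathcal{N}$. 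Hence the induced map $(p_n,q_n)\mapsto(p_{n+1},q_{n+1})$ is precisely the generalized projection integrator attached to $\Phi_h$ for these translations, and Theorem~\ref{theorem5} would then deliver its symplecticity. The second-copy statement follows from the same construction after interchanging the roles of $(p,q)$ and $(x,y)$, using that $H_A+H_B$ and Tao's $H_C$ are both invariant under the copy-swap $(p,x,q,y)\mapsto(x,p,y,q)$.

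For the order claim I would argue that, since $\Phi_h$ is an $r$th-order integrator for $\Gamma$ and the exact extended flow keeps the two copies identical on $\mathcal{N}$, one step taken from a point of $\mathcal{N}$ yields $(p_{n+1},q_{n+1})$ and $(x_{n+1},y_{n+1})$ that each agree with the exact solution of $H(p,q)$ up to $\mathcal{O}(h^{r+1})$. Subtracting, the per-step defect $(x_{n+1}-p_{n+1},\,y_{n+1}-q_{n+1})$ is $\mathcal{O}(h^{r+1})$, so the projection translation $\delta$ is $\mathcal{O}(h^{r+1})$, while for a start on $\mathcal{N}$ the inverse-projection translation $\Delta$ vanishes. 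Theorem~\ref{theorem5} then certifies order $r$, and the global order-$r$ accuracy of the first copy is simply the order of $\Phi_h$ for the extended system restricted to $\mathcal{N}$-valued initial data.

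The step I expect to be the real obstacle is the symplecticity conclusion itself, because the two translation vectors above are \emph{not} fixed: as the base point $(p_n,q_n)$ varies, the defects $x_n-p_n$ and $p_{n+1}-x_{n+1}$ vary with it, so the projection is a point-dependent translation whose Jacobian is no longer the identity. Consequently the clean ``translation has identity Jacobian'' reasoning underlying Theorems~\ref{theorem4}--\ref{theorem5} must be re-examined in this setting, since what Lemma~\ref{theorem1} actually requires is that the composite carry a whole neighborhood of $\mathcal{N}$ into $\mathcal{N}$, not merely the single base point. I would therefore concentrate the effort on verifying this neighborhood-invariance condition directly, that is, on deducing $\mathrm{d}p_{n+1}\wedge\mathrm{d}q_{n+1}=\mathrm{d}p_n\wedge\mathrm{d}q_n$ from the restricted relation $\mathrm{d}\tilde{p}_{n+1}\wedge\mathrm{d}\tilde{q}_{n+1}+\mathrm{d}\tilde{x}_{n+1}\wedge\mathrm{d}\tilde{y}_{n+1}=2\,\mathrm{d}p_n\wedge\mathrm{d}q_n$ coming from the symplecticity of $\Phi_h$ along $\mathcal{N}$; since that single relation does not by itself separate the first-copy area form from the second-copy area form, I anticipate that closing the argument will hinge on an extra structural property of $\Phi_h$ under the copy-swap, and this is where the delicate part of the proof lies.
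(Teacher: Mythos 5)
Your construction is exactly the paper's proof. The paper introduces the translation $\widetilde{T}_n:(p_n,x_n,q_n,y_n)\mapsto(p_n,x_n,q_n,y_n)+(0,p_n-x_n,0,q_n-y_n)$ and writes the one-step map as $\Psi_h^{(n+1)}=\widetilde{T}_{n+1}\circ\Phi_h\circ\widetilde{T}_n^{-1}$, which is precisely your generalized-projection realization with $\Delta=(0,x_n-p_n,0,y_n-q_n)$ and $\delta=(0,p_{n+1}-x_{n+1},0,q_{n+1}-y_{n+1})$; symplecticity is then asserted by citing Lemma~\ref{theorem1} and Theorem~\ref{theorem5}, and the second copy is handled ``similarly'' (by translating the first and third components instead), with no appeal to copy-swap invariance of $\Gamma$. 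The order claim is settled exactly as in your closing sentence: the numerical solution of $\Gamma$ started from $(p_0,p_0,q_0,q_0)$ is globally $\mathcal{O}(h^r)$-accurate, and the exact extended flow keeps both copies equal to the solution of $H$, so each copy is an $\mathcal{O}(h^r)$ approximation of $\big(p(t_n),q(t_n)\big)$. Note that your intermediate per-step argument ($\Delta=0$, $\delta=\mathcal{O}(h^{r+1})$) is only valid at the first step: for $n\geq1$ the extended trajectory has left $\mathcal{N}$, the defects are accumulated quantities (of size $\mathcal{O}(t_nh^r)$ at best, not $\mathcal{O}(h^{r+1})$), so the order hypothesis of Theorem~\ref{theorem5} is unavailable and the global-accuracy argument is the one that actually carries the order claim—as it does in the paper.

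Where you and the paper part company is the step you single out as the real obstacle, and here you should know that the paper does not supply the neighborhood-invariance verification you call for. Its proof treats $\widetilde{T}_n$ and $\widetilde{T}_{n+1}$ as translations by \emph{fixed} vectors—the same device as in the proofs of Theorems~\ref{theorem4} and~\ref{theorem5}—so each has identity Jacobian, $\Psi_h^{(n+1)}$ is symplectic on $T^*\mathbb{R}^{2d}$ as a composition of symplectic maps, and Lemma~\ref{theorem1} is invoked even though $\Psi_h^{(n+1)}$ carries only the single trajectory point $(p_n,p_n,q_n,q_n)$, rather than all of $\mathcal{N}$, into $\mathcal{N}$. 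In other words, the symplecticity claimed in Theorem~\ref{theorem6} is symplecticity of this frozen-translation composite, in which the defects $p_n-x_n$, $q_n-y_n$, $p_{n+1}-x_{n+1}$, $q_{n+1}-y_{n+1}$ enter as constants and are not differentiated with respect to the base point; under that reading your proof is complete as written. The genuine subtlety you raise—that the honest Jacobian of $(p_n,q_n)\mapsto(p_{n+1},q_{n+1})$ must differentiate the point-dependent defects, and that the restricted relation $\mathrm{d}\tilde{p}_{n+1}\wedge\mathrm{d}\tilde{q}_{n+1}+\mathrm{d}\tilde{x}_{n+1}\wedge\mathrm{d}\tilde{y}_{n+1}=2\,\mathrm{d}p_n\wedge\mathrm{d}q_n$ does not by itself split into the two copies—is not resolved anywhere in the paper, so you should not expect to find the extra structural property you anticipate: the paper closes this step by the frozen-translation convention, not by an argument you have missed. (This weaker sense of symplecticity is in fact consistent with the paper's own later observations, e.g., that certain globally ``symplectic'' projection variants nevertheless exhibit quadratic error growth in Section~\ref{sec:num}.)
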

\begin{proof}
Because $(p_{n+1},x_{n+1},q_{n+1},y_{n+1})$ is globally of
accuracy $\mathcal{O}(h^r)$
for the exact solution $\big(p(t_n),p(t_n),q(t_n),q(t_n)\big)$ of the
extended Hamiltonian $\Gamma(p,x,q,y)$ once imposed the initial
conditions $(p_0,p_0,q_0,q_0)$, both $(p_{n+1},q_{n+1})$ and $(x_{n+1},y_{n+1})$
can be regarded as an $\mathcal{O}(h^r)$-approximation to the exact
solution $\big(p(t_n),q(t_n)\big)$ of the  Hamiltonian $H(p,q)$, which shows the
$r$th order of $\widetilde{\Phi}_h$.

Define the translation mapping $\widetilde{T}_n$ as follows
\begin{equation*}
\widetilde{T}_n:~(p_n,x_n,q_n,y_n)\mapsto (p_n,x_n,q_n,y_n)
+ (0,\delta_{n} x,0,\delta_{n} y),
\end{equation*}
where $\delta_{n} x= p_n-x_n$ and  $\delta_{n} y= q_n-y_n$. It is
clear that the mapping $\widetilde{T}_n$ translates
$(p_n,x_n,q_n,y_n)$ to $(p_n,p_n,q_n,q_n)$, and its inverse
$\widetilde{T}_n^{-1}$ translates $(p_n,p_n,q_n,q_n)$ back to
$(p_n,x_n,q_n,y_n)$. Now, let $\Psi^{(n+1)}_h=
\widetilde{T}_{n+1}\circ\Phi_h\circ\widetilde{T}_{n}^{-1}$ for
$n\in\mathbb{N}$. We then have
\begin{equation}\label{theo6-prf-eq1}
\Psi^{(n+1)}_h:~(p_n,p_n,q_n,q_n)\mapsto (p_{n+1},p_{n+1},q_{n+1},q_{n+1}),
\end{equation}
where $(p_n,q_n)$ is just the numerical solution obtained by  $\Phi_h$. It immediately
follows from Lemma~\ref{theorem1} and Theorem~\ref{theorem5} that the induced
mapping $\widetilde{\Phi}_h:~(p_n,q_n)\mapsto(p_{n+1},q_{n+1})$ is also symplectic.
The symplecticity of $\widetilde{\Phi}_h:~(x_n,y_n)\mapsto(x_{n+1},y_{n+1})$
could be proved similarly.
This completes the proof.
\end{proof}

Theorem~\ref{theorem6} definitely shows that all symplectic extended
phase space methods derived by  Pihajoki's or Tao's approach are
also symplectic in the original phase space. However, as numerically
verified by many experiments, the numerical performance of
Pihajoki's or Tao's explicit symplectic methods is not desirable as
good as classical symplectic methods whose linear error growth holds
for a long time. In particular, Pihajoki's methods without any
mixing step remain accurate only for a very short time. These
numerical observations seem to violate the theoretical
good performance of symplectic methods. What is the reason for this
phenomenon? To address this key issue and provide a convincing
interpretation, we present a backward error analysis in the next
section, which profoundly depends on both the existence of the
global modified Hamiltonian and the regular or chaotic motion of the
system.

\subsection{Symmetry of symplectic integrators with projection}
In  this subsection, we turn to the symmetry of the previously
discussed symplectic methods. Similarly to the symplecticity, the
symmetry should also be considered in two different senses, i.e., in
the extended phase space and in the original phase space. If
$(\Delta p,\Delta q,\Delta x,\Delta y)=(\delta p,\delta q,\delta
x,\delta y)$ and $\Phi_h$ is symmetric, the symplectic integrator
with generalized projection defined by Definition~\ref{definition3}
becomes the symmetric projection method proposed in
\cite{Jayawardana2023}. The results from \cite{Hairer2000} or
\cite{Jayawardana2023} illustrate the symmetry of the mapping
$\overline{\Phi}_h:~(p_n,p_n,q_n,q_n)\mapsto(p_{n+1},p_{n+1},q_{n+1},q_{n+1})$
in the extended phase space. Then, the symmetry of the induced
integrator $\widetilde{\Phi}_h:~(p_n,q_n)\mapsto(p_{n+1},q_{n+1})$
in the original phase space immediately follows.

However, for the symplectic integrator with standard projection
defined by Definition~\ref{definition2}, according to the result on
standard projection methods in \cite[pp.~161]{Hairer2006}, it is
known that the mapping $\overline{\Phi}_h:~(p_n,p_n,q_n,q_n)
\mapsto(p_{n+1},p_{n+1},q_{n+1},q_{n+1})$ determined by ${\Phi}_h$ with
standard projection is no longer symmetric in the extended phase
space even if $\Phi_h$ is symmetric, and neither is the
induced numerical integrator
$\widetilde{\Phi}_h:~(p_n,q_n)\mapsto(p_{n+1},q_{n+1})$.

For Pihajoki's extended phase space methods without mixing or Tao's
extended phase space methods, the symmetry of the numerical
integrator $\widetilde{\Phi}_h:~(p_n,q_n) \mapsto(p_{n+1},q_{n+1})$
is equivalent to that of the mapping
$\Psi^{(n+1)}_h:~(p_n,p_n,q_n,q_n)\mapsto (p_{n+1},p_{n+1},
q_{n+1},q_{n+1})$ defined by \eqref{theo6-prf-eq1}. Because of
$\Psi^{(n+1)}_h=\widetilde{T}_{n+1}\circ\Phi_h\circ
\widetilde{T}_{n}^{-1}$, the symmetry holds if and only if $\Phi_h$
is symmetric and $\widetilde{T}_{n+1}=\widetilde{T}_{n}$ holds for
all $n\in\mathbb{N}$ and arbitrary Hamiltonian $H(p,q)$.
As we have known,
$\widetilde{T}_{n+1}=\widetilde{T}_{n}$ does not always hold even
for a linear Hamiltonian. Thus, we could conclude that
$\widetilde{\Phi}_h$ is nonsymmetric for a general Hamiltonian
$H(p,q)$ even if $\Phi_h$ is symmetric.

\section{Backward error analysis of symplectic integrators}\label{sec:error}
This section mainly concerns the backward error analysis of the
proposed symplectic methods for nonseparable Hamiltonian systems.
The main result is that some of the explicit symplectic integrators
proposed in this paper naturally possess a linear growth of global
errors and a near-preservation of first integrals for (near-)
integrable Hamiltonian systems and for regular trajectories in the
regular region of nonintegrable Hamiltonian systems. Moreover, the
effective estimated time interval of linear error growth is
essential. The effective estimated interval is similar to that of
the classical implicit symplectic integrators for the case of
(near-) integrable Hamiltonian systems, while it will be reduced by
a factor of the radius of the regular region for nonintegrable
Hamiltonian systems. The backward error analysis will be made in two
different cases: (i) integrable or near-integrable Hamiltonian
systems and (ii) nonintegrable Hamiltonian systems.

\subsection{Existence of global modified Hamiltonian}\label{sec:modifiedH}
As analyzed in the previous sections, there exist many types of
explicit symplectic integrators including Pihajoki's or Tao's
extended phase space methods for general nonseparable Hamiltonian
systems. However, as revealed in the literature, the
actual performance of Pihajoki's or Tao's methods is not as good as
classical symplectic methods constructed in the original phase
space, such as the symplectic RK method. As is known, the linear
growth of global errors and the uniform bound of energy errors are
the most important advantages for symplectic integrators. Hence, we
should make it clear which explicit symplectic integrators
constructed in the previous sections have such good properties. On
noting that the linear growth of global errors proved by the
backward error analysis in \cite{Hairer2006} requires the existence
of the globally well-defined modified Hamiltonian for the
corresponding symplectic integrator, we will provide some special
explicit symplectic integrators and prove the existence of the
global modified Hamiltonian in this subsection.

\begin{definition}[Symplectic integrator with single factor]
\label{definition4} Assuming we have an $r$th-order
symplectic extended phase space integrator
$\Phi_h:~T^*\mathbb{R}^{2d}\rightarrow T^*\mathbb{R}^{2d}$ for the
extended Hamiltonian system $\Gamma(p,x,q,y)$ in
\eqref{extend-Hamilton} and a positive scalar $\lambda_0$,
the symplectic integrator
$\widetilde{\Phi}_h:~(p_n,q_n)\mapsto (p_{n+1},q_{n+1})$ for the
Hamiltonian system $H(p,q)$ with the initial conditions $(p_0,q_0)$
is induced and defined as follows:
\begin{enumerate}
  \item $(\tilde{p}_{n+1},\tilde{x}_{n+1},\tilde{q}_{n+1},\tilde{y}_{n+1})
  := \Phi_h(p_n,p_n,q_n,q_n)$;
  \item set $(p_{n+1},q_{n+1}) :=\lambda_0\cdot\big(\tilde{p}_{n+1},\tilde{q}_{n+1})
  + (1-\lambda_0)\cdot(\tilde{x}_{n+1},\tilde{y}_{n+1})$;
  \item define $(p_{n+1},q_{n+1}):=\widetilde{\Phi}_h(p_n,q_n)$.
\end{enumerate}
\end{definition}

\begin{definition}[Symplectic integrator with double factors]
\label{definition5} Assuming we have an $r$th-order
symplectic extended phase space integrator
$\Phi_h:~T^*\mathbb{R}^{2d}\rightarrow T^*\mathbb{R}^{2d}$ for the
extended Hamiltonian system $\Gamma(p,x,q,y)$ in
\eqref{extend-Hamilton} and two positive scalars $\lambda_0$ and
$\mu_0$, the symplectic integrator
$\widetilde{\Phi}_h:~(p_n,q_n)\mapsto (p_{n+1},q_{n+1})$ for the
Hamiltonian system $H(p,q)$ with the initial conditions $(p_0,q_0)$
is induced and defined as follows:
\begin{enumerate}
  \item $(\tilde{p}_{n+1},\tilde{x}_{n+1},\tilde{q}_{n+1},\tilde{y}_{n+1})
  := \Phi_h(p_n,p_n,q_n,q_n)$;
  \item if $n$ is odd, set
  $$(p_{n+1},q_{n+1}) :=\big(\lambda_0\tilde{p}_{n+1}+(1-\lambda_0)\tilde{x}_{n+1},
  \mu_0\tilde{q}_{n+1}+(1-\mu_0)\tilde{y}_{n+1}\big),$$
  if $n$ is even, set
  $$(p_{n+1},q_{n+1}) :=\big(\mu_0\tilde{p}_{n+1}+(1-\mu_0)\tilde{x}_{n+1},
  \lambda_0\tilde{q}_{n+1}+(1-\lambda_0)\tilde{y}_{n+1}\big);$$
  \item define $(p_{n+1},q_{n+1}):=\widetilde{\Phi}_h(p_n,q_n)$.
\end{enumerate}
\end{definition}
The single-factor symplectic integrator defined by
Definition~\ref{definition4} is just a special case of
Definition~\ref{definition1} by setting
$\lambda=\xi=(\lambda_0,\ldots,\lambda_0)\in\mathbb{R}^d$ and also a
special case of Definition~\ref{definition2} by admitting the
setting \eqref{translation-set} with
$\lambda=\xi=(\lambda_0,\ldots,\lambda_0)\in\mathbb{R}^d$. The
double-factor symplectic integrator defined by
Definition~\ref{definition5} is an extension of
Definition~\ref{definition4} by using two different scalars
respectively for odd and even steps. In particular, if
$\lambda_0=\mu_0$, the double-factor symplectic integrator reduces
to the single-factor symplectic integrator. In general, we select
the values of the weight coefficients $\lambda_0$ and $\mu_0$ on the
interval $[0,1]$ as the approximation of
$(\tilde{p}_{n+1},\tilde{q}_{n+1})$ to the  exact solution
$\big(p(t_{n+1}),q(t_{n+1})\big)$ is in the same magnitude of
$\mathcal{O}(h^r)$ as $(\tilde{x}_{n+1},\tilde{y}_{n+1})$. We also
note that both the  single-factor symplectic integrator  and the
double-factor symplectic integrator are explicit. The following
lemma is involved in the global modified Hamiltonian of the
single-factor or double-factor symplectic integrator defined above.
\begin{lemma}\label{lemma1}
Suppose that the Hamiltonian $H(p,q)$ is smooth on the open set
$D\subset T^{*}\mathbb{R}^{d}$. Then, there exists a globally
well-defined modified Hamiltonian $\widetilde{H}(p,q)$ for the
single-factor symplectic integrator defined by
Definition~\ref{definition4} and it is smooth on the same open set
$D$ as $H(p,q)$. It also holds for the two-factor symplectic
integrator defined by Definition~\ref{definition5}.
\end{lemma}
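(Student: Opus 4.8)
The plan is to lift the question to the extended phase space, where $\Phi_h$ is transparent, and then push the resulting modified Hamiltonian back down to $T^*\mathbb{R}^d$ through the weighting step. First I would record that the extended Hamiltonian $\Gamma=H_A+H_B$ of \eqref{extend-Hamilton}, with $H_A(p,y)=H(p,y)$ and $H_B(x,q)=H(x,q)$, is smooth on the open set $D_{\mathrm{ext}}:=\{(p,x,q,y):(p,y)\in D,\ (x,q)\in D\}$, and that an explicit symplectic $\Phi_h$ for this separable $\Gamma$ is necessarily a composition of the exact flows $\exp(\cdot\,X_{H_A})$ and $\exp(\cdot\,X_{H_B})$. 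For such a splitting method the standard backward error analysis \cite{Hairer2006} produces its modified Hamiltonian $\widetilde{\Gamma}$ as a formal series in $h$ whose coefficients are fixed linear combinations of iterated Poisson brackets of $H_A$ and $H_B$. Because $H_A$ and $H_B$ are globally smooth on $D_{\mathrm{ext}}$, every such bracket is a single globally defined smooth function, so $\widetilde{\Gamma}$ is a genuinely global modified Hamiltonian on $D_{\mathrm{ext}}$, not a patchwork of local ones. This is the step that makes globality automatic and is precisely why the separable structure of $\Gamma$ is exploited.

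Next I would descend to the original phase space for the single-factor integrator of Definition~\ref{definition4}. The essential point is that a constant scalar $\lambda_0$ makes $\widetilde{\Phi}_h$ the \emph{same} symplectic map at every step (symplectic by Lemma~\ref{theorem1}), so it is a bona fide autonomous near-identity symplectomorphism of $D$ and backward error analysis applies. Writing $\iota(p,q)=(p,p,q,q)$ and letting $L$ denote the weighting $(\tilde p,\tilde x,\tilde q,\tilde y)\mapsto\big(\lambda_0\tilde p+(1-\lambda_0)\tilde x,\ \lambda_0\tilde q+(1-\lambda_0)\tilde y\big)$, we have $\widetilde{\Phi}_h=L\circ\Phi_h\circ\iota$ with $\Phi_h=\exp(hX_{\widetilde{\Gamma}})$ formally. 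I would decompose $\mathbb{R}^{4d}$ as the symplectic direct sum $\mathcal{N}\oplus\mathcal{N}^{\omega}$, where a short computation with $\omega_{4d}=\mathrm{d}p\wedge\mathrm{d}q+\mathrm{d}x\wedge\mathrm{d}y$ gives $\mathcal{N}^{\omega}=\{(u,-u,w,-w)\}$; for $\lambda_0=\tfrac12$ the map $L$ is exactly the symplectic-orthogonal projection onto $\mathcal{N}$, and for general $\lambda_0$ it is realized by the explicit invertible symplectic shear/translation used in \eqref{translation-set} and in Theorem~\ref{theorem2}. Expressing $\widetilde{\Gamma}$ in coordinates adapted to this splitting, the coefficients of the modified Hamiltonian $\widetilde{H}$ of $\widetilde{\Phi}_h$ are obtained by explicit universal formulas in the partial derivatives of $\widetilde{\Gamma}$ evaluated along $\mathcal{N}$. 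Each such coefficient is then a single globally defined smooth function on $\mathcal{N}\cap D_{\mathrm{ext}}\cong D$, so $\widetilde{H}$ is smooth on all of $D$; crucially, because the primitive is written down explicitly we never integrate a merely closed one-form and no simple-connectivity hypothesis on $D$ is needed.

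Finally, for the double-factor integrator of Definition~\ref{definition5} the one-step map alternates between odd and even indices and is not autonomous; I would instead apply the single-factor argument to the two-step map over the stepsize $2h$, which is the composition of two symplectic maps of single-factor type (with weights $\lambda_0$ and $\mu_0$) and is therefore itself an autonomous near-identity symplectomorphism. Its global modified Hamiltonian is produced exactly as above and governs the subsequence of every second iterate, and smoothness on $D$ is inherited verbatim. I expect the main obstacle to be the descent in the second paragraph: proving that the modified Hamiltonian of the projected flow $L\circ\exp(hX_{\widetilde{\Gamma}})\circ\iota$ is a \emph{global} function rather than only a local one, and doing so uniformly in $\lambda_0$ (for $\lambda_0\neq\tfrac12$ the weighting is not the symplectic-orthogonal projection, so the transverse motion must be tracked through the explicit shear). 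The way around this is to lean on the explicit, globally smooth form of $\widetilde{\Gamma}$ inherited from the Baker--Campbell--Hausdorff series, so that the descent yields an explicit global primitive for $\widetilde{H}$ and the usual local-to-global gap never arises.
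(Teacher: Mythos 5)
Your first paragraph is sound, and the paper makes the same observation in passing (Remark~\ref{remark5}): for a splitting method in the extended phase space, the BCH series yields a modified extended Hamiltonian $\widetilde{\Gamma}$ built from iterated Poisson brackets of $H_A$ and $H_B$, hence globally defined and smooth. The genuine gap is exactly where you predicted it: the descent. You write $\widetilde{\Phi}_h=L\circ\exp(hX_{\widetilde{\Gamma}})\circ\iota$ and assert that ``explicit universal formulas'' produce the coefficients of a modified Hamiltonian for this composition from derivatives of $\widetilde{\Gamma}$ along $\mathcal{N}$. No such formulas are exhibited, and none are available off the shelf, because the formal flow $\exp(hX_{\widetilde{\Gamma}})$ does \emph{not} leave $\mathcal{N}$ invariant: the bracket terms in $\widetilde{\Gamma}$, e.g.\ $K=\{H_A,H_B\}=H_p(p,y)H_q(x,q)-H_q(p,y)H_p(x,q)$ and its iterates, generate motion transversal to $\mathcal{N}$ (at a diagonal point one finds $\partial K/\partial p=-\,\partial K/\partial x=H_{pp}H_q-H_{pq}H_p\neq0$ generically, so $\dot q=-\dot y\neq0$ there). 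Hence $L\circ\exp(hX_{\widetilde{\Gamma}})\circ\iota$ is neither a restriction nor a conjugation of a Hamiltonian flow, and the Hamiltonian structure of $\widetilde{\Gamma}$ does not transport through $L$ and $\iota$ in any routine way. What you do get for free is that $\widetilde{\Phi}_h$ is symplectic and near-identity, so each coefficient $f_j$ of its modified vector field is an explicit, globally smooth function with $J f_j$ a \emph{closed} one-form; but exactness of these forms on an arbitrary open set $D$ is precisely the local-versus-global issue the lemma addresses, and claiming an ``explicit global primitive'' without constructing one begs the question. (A secondary point: reducing the double-factor case to the squared map only produces a modified Hamiltonian governing every second iterate, which is weaker than the statement.)

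The paper closes this gap by a mechanism absent from your proposal: it proves that the induced map $\widetilde{\Phi}_h$ is itself a symplectic B-series (respectively P-series) method in the original variables, and then quotes the general theorem that such methods possess globally defined modified Hamiltonians. Concretely, writing the extended system as $\dot u=f(v)$, $\dot v=f(u)$ with $u=(p,y)$, $v=(x,q)$, $f=J^{-1}\nabla H$, the method $\Phi_h$ is a PRK method whose P-series runs over bi-colored trees with alternating generations; each such tree $\tau$ has an adjoint $\tau^*$ obtained by swapping colors, the paper argues $a(\tau)=a(\tau^*)$ for the corresponding coefficients, and on the diagonal $u=v$ the elementary differentials of $\tau$ and $\tau^*$ collapse to the elementary differential $\mathcal{F}(\theta)(p,q)$ of a single rooted tree $\theta$ for $\dot z=f(z)$. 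Consequently the $\lambda_0$-weighted average in Definition~\ref{definition4} is again a B-series in $(p,q)$, and the global modified Hamiltonian follows from Theorems IX.9.8 and IX.10.9 of \cite{Hairer2006}, i.e.\ from the elementary-Hamiltonian theory for symplectic B-/P-series methods, which supplies exactly the explicit global primitives your argument needs but does not construct. Note that membership in the B-/P-series class is not automatic for a projected or averaged map; it is this combinatorial identity between adjoint trees, together with the collapse of elementary differentials on $\mathcal{N}$, that makes it true here, and that is the missing idea in your proposal.
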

\begin{proof}
According to \cite{Pihajoki2015}, the explicit extended phase space
method $\Phi_h$ can be rewritten as a PRK method. Let $u=(p,y)$,
$v=(x,q)$, and $f=J^{-1}\nabla H$, then the canonical equations
corresponding to the extended Hamltonian $\Gamma(p,x,q,y)$ in
\eqref{extend-Hamilton} could be written as
\begin{equation}\label{lemma1-prf-eq1}
\begin{aligned}
\dot{u}=f(v),\qquad
\dot{v}=f(u).
\end{aligned}
\end{equation}
We then consider the P-series of PRK methods based on the bi-colored
trees. Let $\mathcal{TP}_u$ be the set of bi-colored trees whose
root is $\bullet$ that corresponds to the derivatives of $u$,
$\mathcal{TP}_v$ be the set of bi-colored
 trees whose root is
$\circ$ that corresponds to the derivatives of $v$, and
$\mathcal{TP}=\mathcal{TP}_u\cup\mathcal{TP}_v$. The colors of
$\bullet$ and $\circ$ are called, respectively, black and white.
Other definitions such as the elementary differential
$\mathcal{F}(\tau)$, the symmetry coefficient $\sigma(\tau)$, and
the order $|\tau|$ of the bi-colored tree $\tau\in\mathcal{TP}$ can
be found in \cite{Hairer2006} and are not introduced in detail here.

Let $\mathcal{STP}_u\subset \mathcal{TP}_u$ be the set of bi-colored
trees whose root is black, white vertices have only black sons, and
black vertices have only white sons. The set $\mathcal{STP}_v$ is
correspondingly defined. Then, the vertices in the same generation
(or level) of a tree in $\mathcal{STP}_u$ or $\mathcal{STP}_v$ have
the same color. Moreover, for each bi-colored tree
$\tau\in\mathcal{STP}_u$, there exists a unique bi-colored tree
$\tau^*\in\mathcal{STP}_v$ such that $\tau^*$ is obtained by
changing the color (from white to black or from black to white) of
all vertices of $\tau$, and vice versa. We say that
such a $\tau^*$ is adjoint to $\tau$.

Denote $\Phi_h:~(u,v)\mapsto(\tilde{u},\tilde{v})$,
$\tilde{u}=(\tilde{p},\tilde{y})$, and $\tilde{v}=(\tilde{x},\tilde{q})$.
Because the numerical method
$\Phi_h$ can be expressed as a PRK method, according to the particular formulation
of the differential equations in \eqref{lemma1-prf-eq1} and Theorem~III.2.4
of \cite{Hairer2006}, the numerical solution of $\Phi_h$ is a P-series as follows:
\begin{equation}\label{lemma1-prf-eq2}
\begin{aligned}
\tilde{u}=u +
\sum_{\tau\in\mathcal{STP}_u}\frac{h^{|\tau|}}{\sigma(\tau)}
a(\tau)\mathcal{F}(\tau)(u,v),
\\
\tilde{v}=v +
\sum_{\tau\in\mathcal{STP}_v}\frac{h^{|\tau|}}{\sigma(\tau)}
a(\tau)\mathcal{F}(\tau)(u,v),
\end{aligned}
\end{equation}
where the mapping
$a:\mathcal{TP}\cup\{\emptyset_u,\emptyset_v\}\rightarrow
\mathbb{R}$ is defined as that in \cite[Theorem~III.2.4]{Hairer2006}
and we omit the details here.

Let $\mathcal{T}$ be the set of rooted trees whose vertices are all
black. For each rooted tree $\theta\in\mathcal{T}$, there exists a
unique $\tau\in\mathcal{STP}_u$ and its adjoint
$\tau^*\in\mathcal{STP}_v$ such that $\theta$ could be obtained by
changing the color of all vertices of $\tau$ or $\tau^*$ to black
\cite{Sanz-Serna1994}. We denote such three rooted or bi-colored
trees $\theta\in \mathcal{T}$, $\tau\in \mathcal{STP}_u$, and
$\tau^*\in \mathcal{STP}_v$ as a triple $(\theta,\tau,\tau^*)$. From
the definitions of the symmetry coefficient respectively for the
rooted tree and the bi-colored tree, the equality
\begin{equation}\label{lemma1-prf-eq3}
\sigma(\theta)=\sigma(\tau)=\sigma(\tau^*),
\end{equation}
holds for all such triples $(\theta,\tau,\tau^*)$.

It follows from the results of \cite[pp.~46]{Sanz-Serna1994} that the equality
\begin{equation}\label{lemma1-prf-eq4}
a(\tau)=a(\tau^*)
\end{equation}
holds for all adjoint bi-colored trees once $|\tau|=|\tau^*|\leq
r$. By induction, we could further deduce that
\eqref{lemma1-prf-eq4} holds for all $\tau\in\mathcal{STP}_u$ and
the adjoint $\tau^*\in\mathcal{STP}_v$. Because of the particular
formulation of \eqref{lemma1-prf-eq1}, once $u=v=(p,q)$, it
immediately follows from the result \cite[pp.~46]{Sanz-Serna1994}
that
$\mathcal{F}(\tau)(u,v)=\mathcal{F}(\tau^*)(u,v)=\mathcal{F}(\theta)(p,q)$
holds for the triple  $(\theta,\tau,\tau^*)$, where
$\mathcal{F}(\theta)(p,q)$ is the elementary differential
corresponding to the rooted tree $\theta\in\mathcal{T}$.

Let $\mathcal{F}^{[1]}$ denote the vector in $\mathbb{R}^d$ whose
entries are the first $d$ entries of
$\mathcal{F}$, and $\mathcal{F}^{[2]}$ denote the vector in
$\mathbb{R}^d$ whose entries are the last $d$
entries of $\mathcal{F}$. On noting
$\tilde{u}=(\tilde{p},\tilde{y})$ and
$\tilde{v}=(\tilde{x},\tilde{q})$, it immediately follows from the
equations \eqref{lemma1-prf-eq2}, \eqref{lemma1-prf-eq3}, and
\eqref{lemma1-prf-eq4} that
\begin{equation}\label{lemma1-prf-eq5}
\begin{aligned}
\widehat{p}:=\lambda_0\tilde{p} + (1-\lambda_0)\tilde{x} =p +
\sum_{\theta\in\mathcal{T}}\frac{h^{|\theta|}}{\sigma(\theta)}
a'(\theta)\mathcal{F}^{[1]}(\theta)(p,q),
\\
\widehat{q}:=\lambda_0\tilde{y} + (1-\lambda_0)\tilde{q} =q +
\sum_{\theta\in\mathcal{T}}\frac{h^{|\theta|}}{\sigma(\theta)}
a'(\theta)\mathcal{F}^{[2]}(\theta)(p,q),
\end{aligned}
\end{equation}
once $u=v=(p,q)$, where the coefficient $a'(\theta)$ is defined by
$a'(\theta):=a(\tau)=a(\tau^*)$ for the triple $(\theta,\tau,\tau^*)$.
Let $\widehat{u}=(\widehat{p},\widehat{q})$, we then can express
the numerical method $\widetilde{\Phi}_h:~u\mapsto \widehat{u}$ by
the following B-series
\begin{equation*}
\widehat{u} = u + \sum_{\theta\in\mathcal{T}}\frac{h^{|\theta|}}
{\sigma(\theta)}a'(\theta)\mathcal{F}(\theta)(u).
\end{equation*}
It finally follows from \cite[Theorem IX.9.8]{Hairer2006} that there
exists a globally well-defined modified Hamiltonian
$\widetilde{H}(p,q)$.

Likewise, the numerical solution of the two-factor symplectic
integrator could be expressed as a P-series, and the existence of
the global modified Hamiltonian $\widetilde{H}(p,q)$ follows from
\cite[Theorem IX.10.9]{Hairer2006}. This completes the proof.
\end{proof}

\begin{remark}\label{remark5}
Because the global modified (original) Hamiltonian is essential to
the linear error growth of symplectic integrators, the lack of
global modified (original) Hamiltonian may also result in a lack of
linear error growth. In this lemma, we only prove the existence of
global modified (original) Hamiltonian for the single-factor or
double-factor symplectic integrator defined in this subsection.
Nevertheless, we are failing to confirm the existence of global
modified Hamiltonian for general standard/generalized projection
symplectic integrators. Due to the multiplicity of the translation
vectors $(\delta p,\delta x,\delta q,\delta y)$ and $(\Delta
p,\Delta x,\Delta q,\Delta y)$ in determining the
standard/generalized projection symplectic integrators with a given
symplectic extended phase space integrator $\Phi_h$, there must
exist such standard/generalized projection symplectic integrators
that cannot possess the linear error growth, and this point is
illustrated by the numerical experiment in
Section~\ref{sec:num}, where the quadratic error growth
of a special standard projection symplectic integrator applied to a
completely integrable Hamiltonian system is shown. Being specific to
Pihajoki's or Tao's extended phase space methods, the existence of
not the global modified original Hamiltonian but the global modified
extended Hamiltonian could be assured for $\Phi_h$ according to the
result in \cite{Yoshida1993} and the fact that $\Phi_h$ is a
splitting method. Hence, this fact may account for the short-term
good performance of Pihajoki's or Tao's methods. The detailed
analysis on Pihajoki's or Tao's methods will be shown in the next
subsection.
\end{remark}

\subsection{Integrable and near-integrable Hamiltonian systems}
Now, we assume that the Hamiltonian $H(p,q)$ is near-integrable, that is,
\begin{equation*}
H(p,q)=H_0(p,q) + \varepsilon H_1(p,q),
\end{equation*}
where $H_0(p,q)$ is completely integrable, $ \varepsilon$ is a small
parameter, and $H(p,q)$ is real-analytic. The Hamiltonian $H(p,q)$
reduces to a completely integrable system once
$\varepsilon\rightarrow0$. Suppose that $H_0(p,q)$ possesses $d$
isolated first integrals $I_j(p,q)~(j=1,\ldots,d)$. For the
near-integrable Hamiltonian $H(p,q)$ with $\varepsilon>0$, we assume
that there exist $d'~(<d)$ isolated integrals
$\widetilde{I}_j(p,q)~(j=1,\ldots,d')$.

Since $H_0(p,q)$ is completely integrable, there exists a symplectic
change of coordinates $(p,q)\mapsto(a,\theta)$, where
$(a,\theta)=(a^1,
\ldots,a^d,\theta^1~\text{mod}~2\pi,\ldots,\theta^d~\text{mod}~2\pi)$
are action-angle variables such that the Hamiltonian $H_0(p,q)$
becomes $\mathcal{H}_0(a)$ whose canonical equations are
\begin{equation*}
\dot{a}=0,\qquad \dot{\theta}=\omega(a),
\end{equation*}
where $\omega(a)=\frac{\partial\mathcal{H}_0}{\partial a}$ and
$\mathcal{H}_0(a)=H_0(p,q)$. For convenience, we will still use
$H_0(a)$ instead of $\mathcal{H}_0(a)$ to denote the new Hamiltonian
of $H_0(p,q)$ in the coordinates $(a,\theta)$. In the following
contents, the same strategy is applied for the Hamiltonian in
different coordinates.

We rewrite the extended Hamiltonian $\Gamma(p,x,q,y)$
in \eqref{extend-Hamilton} as follows:
\begin{equation*}\label{extend-Hamilton-2}
\Gamma(p,x,q,y)=H(p,y) + H(x,q),
\end{equation*}
with the conjugated variables $(p,q)$ and $(x,y)$. Due to the
interaction between $(p,q)$ and $(x,y)$ in the Hamiltonian
$\Gamma(p,x,q,y)$, the functions $\widetilde{I}_j(p,q) +
\widetilde{I}_j(x,y)$ or $\widetilde{I}_j(p,y)
+\widetilde{I}_j(x,q)$ is probably no longer the first integrals of
$\Gamma(p,x,q,y)$,  which means the nonintegrability of
$\Gamma(p,x,q,y)$. In particular,  even if the original Hamiltonian
$H(p,q)$ is completely integrable (i.e., $\varepsilon=0$) with $d$
isolated first integrals $I_j(p,q)~(j=1\ldots,d)$, some of $I_j(p,q)
+ I_j(x,y)$ are probably no longer the first integrals of
$\Gamma(p,x,q,y)$, thereby leading to the  nonintegrability of
$\Gamma(p,x,q,y)$. A simple case addressing the nonintegrability and
chaoticity of $\Gamma(p,x,q,y)$ with a completely integrable
$H(p,q)$ ( i.e., $\varepsilon=0$) can be found in \cite{Tao2016b}.

In spite of the nonintegrable case for $\Gamma(p,x,q,y)$, there
still exist occasions the integrability of the extended Hamiltonian
$\Gamma(p,x,q,y)$ remains the same as the original  Hamiltonian
$H(p,q)$. For example, if $H(p,q)$ is separable in terms of $(p,q)$,
i.e., $H(p,q)=T(p)+V(q)$, then we have
$$\Gamma(p,x,q,y)=T(p)+V(y) + T(x)+V(q)=H(p,q) + H(x,y),$$
which is just two independent copies of the Hamiltonian $H(p,q)$. A
simple case illustrating this point is the  harmonic oscillator
$H(p,q)=\frac{1}{2}(p^2+q^2)$. Moreover, explicit symplectic
integrators could be directly constructed by the splitting of
$H(p,q)$ in this case.

In fact, if $H(p,q)$ is nonseparable, the extended
Hamiltonian $\Gamma(p,x,q,y)$ is most likely to be nonintegrable or
chaotic. As the explicit symplectic integrators proposed in this
paper are based on the symplectic extended phase space method for
$\Gamma(p,x,q,y)$, we cannot derive the linear error growth of the
single-factor or double-factor symplectic integrator directly via
\cite[Theorem~X.3.1]{Hairer2006} because of the lack of (near-)
integrability for $\Gamma(p,x,q,y)$. To apply the backward error
analysis to both the symplectic extended phase space method
and the explicit symplectic integrator proposed in this paper, we
introduce a parameter relaxation and restriction technique that
bounds the discrepancy $\|(p-x,q-y)\|$ to a small parameter
$\varepsilon$ firstly, then drives the Taylor series or truncated
Lindstedt--Poincar\'{e} series of the extended Hamiltonian or
modified extended Hamiltonian in terms of the small parameter
$\varepsilon$, and finally bounds the numerical discrepancy
$\|(p_n-x_n,q_n-y_n)\|$.

According to the definition of the submanifold $\mathcal{N}$ in
\eqref{manifold}, we define the open set
$\mathcal{B}_{\varepsilon}(\mathcal{N})$ as follows:
\begin{equation}\label{openSet}
\mathcal{B}_{\varepsilon}(\mathcal{N}):=
\Big\{(p,x,q,y)\in T^*\mathbb{R}^{2d}\,\big|\,\|(p-x,q-y)\|<\varepsilon
\Big\}\subset T^*\mathbb{R}^{2d},
\end{equation}
which can be regarded as a neighbourhood of the submanifold
$\mathcal{N}$ with the radius $\varepsilon>0$. We consider to
restrict the extended Hamiltonian $\Gamma(p,x,q,y)$ in the open set
$\mathcal{B}_{\varepsilon}(\mathcal{N})$. In view of the analyticity
of $H(p,q)$, we express $\Gamma(p,x,q,y)$ as follows:
\begin{equation*}
\begin{aligned}
\Gamma(p,x,q,y)=
&H(p,q)  +\sum_{\alpha=1}^{\infty}
\frac{\mathcal{D}_q^\alpha H(p,q)}{\alpha!}(y-q)^\alpha
\\
&+ H(x,y)+ \sum_{\alpha=1}^{\infty}
\frac{\mathcal{D}_y^\alpha H(x,y)}{\alpha!}(q-y)^\alpha,
\end{aligned}
\end{equation*}
where $\mathcal{D}_q=
\dfrac{\partial ^{|\alpha|}}{(\partial q^1)^{\alpha_1}
\cdots(\partial q^d)^{\alpha_d}}$,
$\mathcal{D}_y=\dfrac{\partial ^{|\alpha|}}{(\partial y^1)^{\alpha_1}
\cdots(\partial y^d)^{\alpha_d}}$,
$\alpha=(\alpha_1,\ldots,\alpha_d)\in\mathbb{N}^d$,
$|\alpha|=\sum_{k=1}^{d}\alpha_k$, $\alpha!=\prod_{k=1}^{d}\alpha_k!$,
and $(q-y)^\alpha=\prod_{k=1}^{d}(q^k-y^k)^{\alpha_k}$.
As $(p,x,q,y)\in\mathcal{B}_{\varepsilon}(\mathcal{N})$,
the extended Hamiltonian $\Gamma(p,x,q,y)$ can be expressed as
\begin{equation}\label{nearInt-extH}
\Gamma(p,x,q,y) = H_0(p,q) + H_0(x,y)
+ \varepsilon \big( H_1(p,q)+H_1(x,y)+G(p,x,q,y)\big),
\end{equation}
where $H_1$ and $G$ could be derived according to the Taylor expansion of
$\Gamma(p,x,q,y)$.

It is easy to see from \eqref{nearInt-extH} that
$\Gamma(p,x,q,y)$ is a perturbed integrable system for sufficiently
small $\varepsilon$ once $(p,x,q,y)\in
\mathcal{B}_{\varepsilon}(\mathcal{N})$. That is, although the
extended Hamiltonian $\Gamma(p,x,q,y)$ may be not a near-integrable
system globally defined on the open set $D$, it is near-integrable
once the solution is restricted to
$\mathcal{B}_{\varepsilon}(\mathcal{N})$ for sufficiently small
$\varepsilon$. Considering this point, we present the following
theorem regarding the backward error analysis of symplectic extended
phase space methods.
\begin{theorem}\label{theorem7}
Consider applying an $r$th-order $(r\geq1)$ symplectic integrator
$$\Phi_h:~(p_n,x_n,q_n,y_n)\mapsto({p}_{n+1},{x}_{n+1},{q}_{n+1},{y}_{n+1})$$
to the perturbed integrable Hamiltonian system \eqref{nearInt-extH},
whose $\tilde{d}~(\leq 2d)$ first integrals are
$I^*_{1}(p,x,q,y),\ldots,I^*_{\tilde{d}}(p,x,q,y)$. Suppose that the
modified Hamiltonian of $\Phi_h$ is globally well-defined on the
same open set as the extended Hamiltonian $\Gamma(p,x,q,y)$ and
$\omega(a^*)$ satisfies the diophantine condition
\begin{equation}\label{diophantine}
|k\cdot\omega|\geq\gamma|k|^{-\nu},\qquad k\in\mathbb{Z}^d,~k\neq0
\end{equation}
for some positive constants $\gamma$, $\nu$. Then, there exist
positive constants $C_0$, $c$, $h_0$, and $\varepsilon_0$ such that
the following results hold for the parameter
$\varepsilon\leq\varepsilon_0$ and all stepsizes
$h\leq\min\{h_0,\varepsilon_0^{1/r}\}$: every numerical solution
starting with $\|I(p_0,q_0)-a^*\|+\|I(x_0,y_0) -a^*\|\leq c|\log
h|^{-(\nu + 1)}$, $(p_0,x_0,q_0,y_0)\in\mathcal{B}_{\varepsilon_0}
(\mathcal{N})$, and $\delta_0=\|(p_0-x_0,q_0-y_0)\|<\varepsilon_0$
satisfies
\begin{subequations}
\begin{eqnarray}\label{theorem7-equ-parent}
&&\big\|(p_n,x_n,q_n,y_n)-\big(p(t),x(t),q(t),y(t)\big)\big\|\leq C_0th^r,
\label{theorem7-subequ1}
\\
&&\big|I_j^*(p_n,x_n,q_n,y_n)-I_j^*(p_0,x_0,q_0,y_0)\big|\leq
C_0h^r,\quad j=1,\ldots,\tilde{d},
\label{theorem7-subequ2}
\end{eqnarray}
\end{subequations}
for $t=nh\leq\min\{h^{-r},\varepsilon_0/\delta_0\}$  with such $n$
that $(p_n,x_n,q_n,y_n)\in\mathcal{B}_{\varepsilon_0}$,
where the constants $C_0$, $c$, $h_0$, $\varepsilon_0$
depend on $d$, $\gamma$, $\nu$, on bounds of the real-analytic
Hamiltonian $H(p,q)$ on a complex neighbourhood of the torus
$\{(p,q):~I(p,q)=a^*\}$, and on the numerical method $\Phi_h$.
\end{theorem}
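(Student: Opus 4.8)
The plan is to adapt the proof of Theorem~X.3.1 in \cite{Hairer2006} to the extended phase space by means of the parameter relaxation and restriction technique. The essential difficulty is that the extended Hamiltonian $\Gamma$ is in general \emph{not} near-integrable on the whole open set $D$, so the classical theory cannot be invoked directly. My strategy is therefore to treat the neighbourhood radius $\varepsilon_0$ of \eqref{openSet} --- equivalently, the bound on the discrepancy $\|(p-x,q-y)\|$ --- as the genuine perturbation parameter. As long as the solution stays in $\mathcal{B}_{\varepsilon_0}(\mathcal{N})$, the expansion \eqref{nearInt-extH} exhibits $\Gamma$ as a perturbation of the integrable Hamiltonian $H_0(p,q)+H_0(x,y)$, whose action--angle coordinates are two independent copies $(a,\theta)$ and $(a',\theta')$ of those of $H_0$, with a perturbation of size $O(\varepsilon_0)$.

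First I would invoke the hypothesis that the modified Hamiltonian of $\Phi_h$ is globally well-defined on the same open set as $\Gamma$ (as guaranteed for the single- and double-factor integrators by Lemma~\ref{lemma1}). Standard backward error analysis then produces a truncated modified Hamiltonian $\widetilde{\Gamma}$ whose exact flow reproduces the numerical solution up to an exponentially small defect over the time interval under consideration. Restricted to $\mathcal{B}_{\varepsilon_0}(\mathcal{N})$, this modified Hamiltonian inherits the perturbed-integrable structure and can be written as $\widetilde{\Gamma}=H_0(p,q)+H_0(x,y)+O(\varepsilon_0)+O(h^r)$, a perturbation of the integrable part now governed by the two small parameters $\varepsilon_0$ and $h^r$.

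Next I would carry out the normal-form (averaging) step. Using the Diophantine condition \eqref{diophantine} on $\omega(a^*)$ together with the real-analyticity of $H$, I would construct a symplectic near-identity transformation that eliminates the angle dependence of the perturbation up to a remainder exponentially small in $1/h$, the construction proceeding through a truncated Lindstedt--Poincar\'{e} expansion of $\widetilde{\Gamma}$; this is precisely the mechanism that forces the initial actions to lie within the logarithmically shrinking distance $c|\log h|^{-(\nu+1)}$ of $a^*$. In the transformed coordinates the action variables drift by at most $O(h^r)$ over times $t\le h^{-r}$, which yields the near-conservation of the first integrals $I_j^*$ in \eqref{theorem7-subequ2}; combining the near-constancy of the actions with the merely linear-in-$t$ spreading of the angle variables gives the linear error growth \eqref{theorem7-subequ1}.

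The main obstacle, and the step requiring the most care, is closing the argument by a bootstrap that keeps the numerical solution inside $\mathcal{B}_{\varepsilon_0}(\mathcal{N})$, since the entire near-integrable expansion is valid only there. Because $\varepsilon_0$ plays the dual role of perturbation size and domain radius, the estimates must be made uniform and self-consistent in this single parameter. I would control the growth of the discrepancy $\delta_n=\|(p_n-x_n,q_n-y_n)\|$, showing that it increases at most linearly from $\delta_0$, so that $\delta_n<\varepsilon_0$ is maintained as long as $t\le\varepsilon_0/\delta_0$; this is exactly the restriction step of the technique and accounts for the time bound $t\le\min\{h^{-r},\varepsilon_0/\delta_0\}$ together with the requirement $(p_n,x_n,q_n,y_n)\in\mathcal{B}_{\varepsilon_0}$ in the statement. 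Once the solution is confined to the neighbourhood, the linear-error-growth and first-integral estimates of the adapted Theorem~X.3.1 apply, with the constants depending on $d$, $\gamma$, $\nu$, on bounds of $H$ over a complex neighbourhood of the torus $\{(p,q):I(p,q)=a^*\}$, and on $\Phi_h$, which completes the proof.
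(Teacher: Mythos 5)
Your proposal is correct and follows essentially the same route as the paper's proof: both adapt Theorem~X.3.1 of \cite{Hairer2006} by treating the discrepancy bound $\varepsilon_0$ on $\mathcal{B}_{\varepsilon_0}(\mathcal{N})$ as the perturbation parameter, pass to the (truncated) modified Hamiltonian, apply the Diophantine normal-form machinery (Lemma~X.2.1 of \cite{Hairer2006}) to get $\mathcal{O}(h^r)$ action drift and linear angle drift, and close with the restriction step bounding $\delta_n$ to obtain the time window $t\leq\min\{h^{-r},\varepsilon_0/\delta_0\}$. The only cosmetic differences are that the paper links $\varepsilon$ and $h$ via $h^{\beta+1}<\varepsilon\leq h^{\beta}$ and performs two successive changes of coordinates (one for $\Gamma$ with parameter $\varepsilon$, one for $\widetilde{\Gamma}$ with parameter $\widetilde{\varepsilon}=h^{r}$), and it uses a polynomial truncation at order $s=2r$ rather than an exponentially small defect, neither of which changes the substance of the argument.
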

\begin{proof}
(a) We first consider the simple case where the
parameter $\varepsilon>0$ is free and independent of the discrepancy
$\|(p-x,q-y)\|$. In general, it is reasonable to assume that
$\varepsilon,h\in(0,1)$. Therefore, for any $\varepsilon,h\in(0,1)$
there must exist a constant $\beta>0$ such that
$h^{\beta+1}<\varepsilon\leq h^{\beta}$. In addition, for
$\delta_0\in[0,\varepsilon_0)$, we let $\delta_0^{-1}= +\infty$ and
$\mu<\delta_0^{-1}$ for any $\mu\in\mathbb{R}$ once
$\delta_0\rightarrow 0^+$.

According to \cite[Lemma~X.2.1]{Hairer2006}, for the fixed
$N\geq\{\frac{2r}{\beta}+1,\frac{3r}{\beta},2\}\geq2$, there exists
a positive number $\varepsilon_0$ such that for any
$\varepsilon\leq\varepsilon_0$ there exists a real-analytic
symplectic change of coordinates
$(p,x,q,y)\mapsto(a,a',\theta,\theta')$ such that the Hamiltonian
$\Gamma(p,x,q,y)$ in \eqref{nearInt-extH} has the following
expression in the new coordinates $(a,a',\theta,\theta')$:
\begin{equation}\label{theo7-prf-eq1}
\Gamma(a,a',\theta,\theta')=H_0(a) + H_0(a') +
\sum_{j=1}^{N-1}\varepsilon^{j}\overline{K}_j(a,a')
+ \varepsilon^{N}R_{N}(a,a',\theta,\theta').
\end{equation}
Because $\Gamma(p,x,q,y)$ is symmetric with respect to
the variables $(p,q)$ and $(x,y)$, i.e.,
$\Gamma(p,x,q,y)=\Gamma(x,p,y,q)$, so are the functions
$\overline{K}_j(a,a')$ and $R_{N}(a,a',\theta,\theta')$, i.e.,
$\overline{K}_j(a,a')=\overline{K}_j(a',a)$ and
$R_{N}(a,a',\theta,\theta')=R_{N}(a',a,\theta',\theta)$.

Let $$H_{\varepsilon,N}(a,a')=H_0(a) + H_0(a') +
\sum_{j=1}^{N-1}\varepsilon^{j}\overline{K}_j(a,a'),\quad
\omega_{\varepsilon,N}(a,a')=\frac{\partial H_{\varepsilon,N}}{\partial a},$$
then the canonical equations corresponding to \eqref{theo7-prf-eq1} read
\begin{equation}\label{theo7-prf-eq2}
\begin{aligned}
&\frac{\mathrm{d}a}{\mathrm{d}t}=-\varepsilon^{N}\frac{\partial R_{N}}{\partial \theta},
\qquad
&&\frac{\mathrm{d}a'}{\mathrm{d}t}=-\varepsilon^{N}\frac{\partial R_{N}}{\partial \theta'},
\\
&\frac{\mathrm{d}\theta}{\mathrm{d}t}=\omega_{\varepsilon,N}(a,a')
+\varepsilon^{N}\frac{\partial R_{N}}{\partial a},
\qquad
&&\frac{\mathrm{d}\theta'}{\mathrm{d}t}
= \omega_{\varepsilon,N}(a',a)+\varepsilon^{N}\frac{\partial R_{N}}{\partial a'}.
\end{aligned}
\end{equation}
We derive with \cite[Lemma X.2.1]{Hairer2006} that
\begin{equation}\label{theo7-prf-eq2.2}
\begin{aligned}
&\|a(t)-a(0)\|\leq Ct\varepsilon^{N},
\\
&\|a'(t)-a'(0)\|\leq Ct\varepsilon^{N},
\end{aligned}
\end{equation}
for $t\leq \varepsilon^{-N+1}$ and
\begin{equation}\label{theo7-prf-eq3}
\begin{aligned}
&\|\theta(t)-\omega_{\varepsilon,N}\big(a(0),a'(0)\big)t-
\theta(0)\|\leq C(t^2 +t|\log\varepsilon|^{\nu+1})\varepsilon^{N},
\\
&\|\theta'(t)-\omega_{\varepsilon,N}\big(a'(0),a(0)\big)t-
\theta'(0)\|\leq C(t^2 +t|\log\varepsilon|^{\nu+1})\varepsilon^{N},
\end{aligned}
\end{equation}
for $t^2\leq \varepsilon^{-N+1}$.

 As mentioned in Remark~\ref{remark5}, the symplectic
integrator $\Phi_h$ is usually selected as the splitting method in
the extended phase space and thus we admit the assumption that the
modified Hamiltonian of $\Phi_h$ is globally well-defined on the
same open set as the extended Hamiltonian $\Gamma(p,x,q,y)$. We next
consider the truncated modified Hamiltonian of the numerical method
$\Phi_h$:
\begin{equation}\label{theo7-prf-eq4}
\widetilde{\Gamma}(p,x,q,y)=\Gamma(p,x,q,y) + h^{r}H_{r+1}(p,x,q,y)
+\cdots+h^{s}H_{s+1}(p,x,q,y),
\end{equation}
whose solution with initial values $(p_0,x_0,q_0,y_0)$ is denoted
by $\big(\widetilde{p}(t),\widetilde{x}(t),\widetilde{q}(t),\widetilde{y}(t)\big)$.
The inequality $h\leq\varepsilon_0^{1/r}$ indicates $h^r\leq\varepsilon_0$. Thus, with the
previously fixed $N$ and the new coordinates $(a,a',\theta,\theta')$, the Hamiltonian
$\widetilde{\Gamma}(p,x,q,y)$ becomes
\begin{equation}\label{theo7-prf-eq5}
\widetilde{\Gamma}(a,a',\theta,\theta')=H_{\varepsilon,N}(a,a')+
 \varepsilon^{N}R_{N}(a,a',\theta,\theta')
+h^{r}E_{r+1}(a,a',\theta,\theta'),
\end{equation}
where $E_{r+1}$ satisfies
$E_{r+1}(p,x,q,y)=\sum_{j=0}^{s-r}h^{j}H_{j+r+1}(p,x,q,y)$ in the
original coordinates $(p,x,q,y)$. Because of
$N\geq\frac{2r}{\beta}+1$ and $h^{\beta+1}<\varepsilon \leq
h^\beta$, we have $\varepsilon^N\leq h^{\beta N}\leq
h^{2r+\beta}\leq h^r$. By letting $\widetilde{\varepsilon}=h^r$, the
Hamiltonian $\widetilde{\Gamma}(a,a',\theta,\theta')$ in
\eqref{theo7-prf-eq5} could be expressed as
\begin{equation}\label{theo7-prf-eq5.2}
\widetilde{\Gamma}(a,a',\theta,\theta')=H_{\varepsilon,N}(a,a')+
\widetilde{\varepsilon}{E}(a,a'\theta,\theta'),
\end{equation}
where ${E}(a,a',\theta,\theta')=
E_{r+1}(a,a',\theta,\theta')
+\frac{\varepsilon^{N}}{\widetilde{\varepsilon}}R_{N}(a,a',\theta,\theta')$.

Applying Lemma~X.2.1 of \cite{Hairer2006} to the Hamiltonian
\eqref{theo7-prf-eq5.2} once again derives that for any fixed
$\widetilde{N}\geq3$ there exist
$\widetilde{\varepsilon}_0>0$ and a symplectic change of
coordinates $(a,a',\theta,\theta')\mapsto(b,b',\varphi,\varphi')$
that is $\mathcal{O}(\widetilde{\varepsilon})$-close to the
identity, such that for any $\widetilde{\varepsilon}\leq
\widetilde{\varepsilon}_0$, i.e., $h\leq h_0:=
\widetilde{\varepsilon}_0^{1/r}$, the modified Hamiltonian
\eqref{theo7-prf-eq4} in the new coordinates
$(b,b',\varphi,\varphi')$ is of the form
\begin{equation}\label{theo7-prf-eq6}
\widetilde{\Gamma}({b},{b}',{\theta},
{\theta}')=H_{\varepsilon,N}({b},{b}')+\sum_{j=1}^{\widetilde{N}-1}
\widetilde{\varepsilon}^{j}\overline{J}_j(b,b')+
 \widetilde{\varepsilon}^{\widetilde{N}}\widetilde{R}_{\widetilde{N}}
 (b,b',\varphi,\varphi').
\end{equation}
If we denote the solution
$\big(\widetilde{p}(t),\widetilde{x}(t),
\widetilde{q}(t),\widetilde{y}(t)\big)$ of the modified Hamiltonian
$\widetilde{\Gamma}$ in the new coordinates
$(b,b',\varphi,\varphi')$ by
$(\widetilde{b},\widetilde{b}',\widetilde{\varphi},\widetilde{\varphi}')$
and let
\begin{equation*}
H_{h,\varepsilon,N}({b},{b}')=H_{\varepsilon,N}({b},{b}')
+\sum_{j=1}^{\widetilde{N}-1}\widetilde{\varepsilon}^{j}\overline{J}_j(b,b'),
\quad
\omega_{h,\varepsilon,N}(b,b')=\frac{\partial H_{h,\varepsilon,N}}{\partial b},
\end{equation*}
then the canonical equations read
\begin{equation}\label{theo7-prf-eq7}
\begin{aligned}
&\frac{\mathrm{d}\widetilde{b}}{\mathrm{d}t}=-\widetilde{\varepsilon}^{\widetilde{N}}
\frac{\partial \widetilde{R}_{\widetilde{N}}}{\partial \widetilde{\varphi}},
\qquad
&&\frac{\mathrm{d}\widetilde{b}'}{\mathrm{d}t}=-\widetilde{\varepsilon}^{\widetilde{N}}
\frac{\partial \widetilde{R}_{\widetilde{N}}}{\partial \widetilde{\varphi}'},
\\
&\frac{\mathrm{d}\widetilde{\varphi}}{\mathrm{d}t}=
\omega_{h,\varepsilon,N}(\widetilde{b},\widetilde{b}')+\widetilde{\varepsilon}^{N}
\frac{\partial \widetilde{R}_{\widetilde{N}}}{\partial \widetilde{b}},
\qquad
&&\frac{\mathrm{d}\widetilde{\varphi}'}{\mathrm{d}t}=
\omega_{h,\varepsilon,N}(\widetilde{b}',\widetilde{b})+\widetilde{\varepsilon}^{N}
\frac{\partial \widetilde{R}_{\widetilde{N}}}{\partial \widetilde{b}'},
\end{aligned}
\end{equation}
whose solution satisfies
\begin{equation}\label{theo7-prf-eq8}
\begin{aligned}
&\|\widetilde{b}(t)-\widetilde{b}(0)\|\leq
\widetilde{C}t\widetilde{\varepsilon}^{\widetilde{N}},
\\
&\|\widetilde{b}'(t)-\widetilde{b}'(0)\|\leq
\widetilde{C}t\widetilde{\varepsilon}^{\widetilde{N}},
\end{aligned}
\end{equation}
for $t\leq \widetilde{\varepsilon}^{-\widetilde{N}+1}$ and
\begin{equation}\label{theo7-prf-eq9}
\begin{aligned}
&\|\widetilde{\varphi}(t)-\omega_{h,\varepsilon,N}\big(\widetilde{b}(0),
\widetilde{b}'(0)\big)t-
\widetilde{\varphi}(0)\|\leq \widetilde{C}(t^2 +t|\log\widetilde{\varepsilon}|^{\nu+1})
\widetilde{\varepsilon}^{\widetilde{N}},
\\
&\|\widetilde{\varphi}'(t)-\omega_{h,\varepsilon,N}\big(\widetilde{b}'(0),
\widetilde{b}(0)\big)t-
\widetilde{\varphi}'(0)\|\leq \widetilde{C}(t^2 +t|\log\widetilde{\varepsilon}|^{\nu+1})
\widetilde{\varepsilon}^{\widetilde{N}},
\end{aligned}
\end{equation}
for $t^2\leq \widetilde{\varepsilon}^{-N+1}$.

Because the symplectic transformation
$(a,a',\theta,\theta')\mapsto(b,b',\varphi,\varphi')$ is
$\mathcal{O}(\widetilde{\varepsilon})$-close to the identity and
$\omega_{h,\varepsilon,N}(b,b')=\omega_{\varepsilon,N}(b,b')
+\mathcal{O}(\widetilde{\varepsilon})$ where the constant symbolized by
the $\mathcal{O}$-notation is dependent of $h$ and $t$,
the solution of the modified Hamiltonian $\widetilde{\Gamma}$ in the
coordinates $(a,a',\theta,\theta')$ satisfies
\begin{equation}\label{theo7-prf-eq10}
\begin{aligned}
&\|\widetilde{a}(t)-\widetilde{a}(0)\|\leq
\widetilde{C}t\widetilde{\varepsilon}^{\widetilde{N}} +
\mathcal{O}(\widetilde{\varepsilon}),
\\
&\|\widetilde{a}'(t)-\widetilde{a}'(0)\|\leq
\widetilde{C}t\widetilde{\varepsilon}^{\widetilde{N}}+
\mathcal{O}(\widetilde{\varepsilon}),
\end{aligned}
\end{equation}
for $t\leq \widetilde{\varepsilon}^{-\widetilde{N}+1}$ and
\begin{equation}\label{theo7-prf-eq11}
\begin{aligned}
&\|\widetilde{\theta}(t)-\omega_{\varepsilon,N}(\widetilde{a}(0),\widetilde{a}'(0))t-
\widetilde{\theta}(0)\|\leq \widetilde{C}(t^2 +t|\log\widetilde{\varepsilon}|^{\nu+1})
\widetilde{\varepsilon}^{\widetilde{N}} + te_h,
\\
&\|\widetilde{\theta}'(t)-\omega_{\varepsilon,N}
(\widetilde{a}'(0),\widetilde{a}(0))t- \widetilde{\theta}'(0)\|\leq
\widetilde{C}(t^2 +t|\log\widetilde{\varepsilon}|^{\nu+1})
\widetilde{\varepsilon}^{\widetilde{N}}+ te'_h,
\end{aligned}
\end{equation}
for $t^2\leq \widetilde{\varepsilon}^{-\widetilde{N}+1}$,
where $e_h=\|\omega_{h,\varepsilon,N}\big(\widetilde{b}(0),\widetilde{b}'(0)\big)
-\omega_{\varepsilon,N}\big(\widetilde{a}(0),\widetilde{a}'(0)\big)\|$
and $e'_h=\|\omega_{h,\varepsilon,N}\big(\widetilde{b}'(0),\widetilde{b}(0)\big)
-\omega_{\varepsilon,N}\big(\widetilde{a}'(0),\widetilde{a}(0)\big)\|$. Because of
$\omega_{h,\varepsilon,N}-\omega_{\varepsilon,N}=\mathcal{O}
(\widetilde{\varepsilon})$ and the $\mathcal{O}(\widetilde{\varepsilon})$-closeness
of the transformation between $(\widetilde{a},\widetilde{a}',\widetilde{\theta},
\widetilde{\theta}')$ and $(\widetilde{b},\widetilde{b}',\widetilde{\varphi},
\widetilde{\varphi}')$, we thus have $e_h=\mathcal{O}(\widetilde{\varepsilon})$
and $e'_h=\mathcal{O}(\widetilde{\varepsilon})$.

Since the same initial values $(p_0,x_0,q_0,y_0)$ are imposed for both
the extended Hamiltonian $\Gamma(p,x,q,y)$ and the modified Hamiltonian
$\widetilde{\Gamma}(p,x,q,y)$, it derives $a(0)=\widetilde{a}(0)$,
$a'(0)=\widetilde{a}'(0)$, $\theta(0)=\widetilde{\theta}(0)$, and
$\theta'(0)=\widetilde{\theta}'(0)$. Then, the comparison between
\eqref{theo7-prf-eq2.2}-\eqref{theo7-prf-eq3} and
\eqref{theo7-prf-eq10}-\eqref{theo7-prf-eq11} yields the difference
between the solutions of $\Gamma(p,x,q,y)$ and $\widetilde{\Gamma}(p,x,q,y)$
in the $(a,a',\theta,\theta')$ as follows:
\begin{equation}\label{theo7-prf-eq12}
\begin{aligned}
&\|\widetilde{a}(t)-{a}(t)\|\leq
\widetilde{C}t\widetilde{\varepsilon}^{\widetilde{N}}
+Ct\varepsilon^{N}+\mathcal{O}(\widetilde{\varepsilon}),
\\
&\|\widetilde{a}'(t)-{a}'(t)\|\leq
\widetilde{C}t\widetilde{\varepsilon}^{\widetilde{N}}
+Ct\varepsilon^{N}+\mathcal{O}(\widetilde{\varepsilon}),
\end{aligned}
\end{equation}
for $t\leq \min\{{\varepsilon}^{-{N}+1},
\widetilde{\varepsilon}^{-\widetilde{N}+1}\}$ and
\begin{equation}\label{theo7-prf-eq13}
\begin{aligned}
&\|\widetilde{\theta}(t)-{\theta}(t)\|\leq
\widetilde{C}(t^2 +t|\log\widetilde{\varepsilon}|^{\nu+1})
\widetilde{\varepsilon}^{\widetilde{N}}
+C(t^2 +t|\log\varepsilon|^{\nu+1})\varepsilon^{N}
+\mathcal{O}(t\widetilde{\varepsilon}),
\\
&\|\widetilde{\theta}'(t)-{\theta}'(t)\|\leq
\widetilde{C}(t^2 +t|\log\widetilde{\varepsilon}|^{\nu+1})
\widetilde{\varepsilon}^{\widetilde{N}}
+C(t^2 +t|\log\varepsilon|^{\nu+1})\varepsilon^{N}
+\mathcal{O}(t\widetilde{\varepsilon}),
\end{aligned}
\end{equation}
for $t^2\leq\min\{\varepsilon^{-N+1},\widetilde{\varepsilon}^{-\widetilde{N}+1}\}$.

Because $\varepsilon\leq h^\beta$ and
$N\geq\{\frac{2r}{\beta}+1,\frac{3r}{\beta},2\}$, we obtain
$\varepsilon^{N}<\varepsilon^{N-1}\leq h^{2r}$ and
$\varepsilon^{-N+1}\geq h^{-2r}>h^{-r}$. In addition,
$\widetilde{\varepsilon}=h^r$ and $\widetilde{N}\geq3$ yield
$\widetilde{\varepsilon}^{\widetilde{N}}\leq
\widetilde{\varepsilon}^3=h^{3r}$ and
$\widetilde{\varepsilon}^{-\widetilde{N}+1}
\geq\widetilde{\varepsilon}^{-2}=h^{-2r}>h^{-r}$. These estimates
along with \eqref{theo7-prf-eq12} derive that the estimation
\begin{equation}\label{theo7-prf-eq14}
\begin{aligned}
\widetilde{a}(t)-a(t)=\mathcal{O}(h^r),
\qquad
\widetilde{a}'(t)-a'(t)=\mathcal{O}(h^r),
\end{aligned}
\end{equation}
holds for $t\leq h^{-r}$.

Moreover, the inequalities of $\varepsilon^{-N+1}\geq
h^{-2r}$ and $\widetilde{\varepsilon} ^{-\widetilde{N}+1}\geq
h^{-2r}$ mean that $t\leq h^{-r}$ leads to
$t^2\leq\min\{\varepsilon^{-N+1},\widetilde{\varepsilon}^{-\widetilde{N}+1}\}$.
Since
$\lim\limits_{\varepsilon\rightarrow0^+}\varepsilon|\log\varepsilon|^{\nu+1}=0$,
we have $\varepsilon|\log\varepsilon|^{\nu+1}<1$ and
$\widetilde{\varepsilon} |\log\widetilde{\varepsilon}|^{\nu+1}<1$
for sufficiently small $\varepsilon$ and $h$. It then follows from
the inequalities of $\varepsilon^{N-1}\leq h^{2r}$ and
$\widetilde{\varepsilon} ^{\widetilde{N}-1}\leq h^{2r}$, together
with \eqref{theo7-prf-eq13} that for $t\leq h^{-r}$ it holds
\begin{equation}\label{theo7-prf-eq15}
\begin{aligned}
\widetilde{\theta}(t)-\theta(t)=\mathcal{O}(th^r),
\qquad
\widetilde{\theta}'(t)-\theta'(t)=\mathcal{O}(th^r).
\end{aligned}
\end{equation}

On noting the real-analyticity of the  symplectic change of coordinates
$(p,x,q,y)\mapsto(a,a',\theta,\theta')$, it follows from \eqref{theo7-prf-eq14}
and \eqref{theo7-prf-eq15} that for $t\leq h^{-r}$ the difference between the solution
$\big(\widetilde{p}(t),\widetilde{x}(t),\widetilde{q}(t),\widetilde{y}(t)\big)$ of the
modified Hamiltonian $\widetilde{\Gamma}(p,x,q,y)$ and the solution
$\big(p(t),x(t),q(t),y(t)\big)$ of the extended Hamiltonian $\Gamma(p,x,q,y)$ satisfies
\begin{equation}\label{theo7-prf-eq16}
\begin{aligned}
\big\|\big(\widetilde{p}(t),\widetilde{x}(t),\widetilde{q}(t),\widetilde{y}(t)\big)
-\big(p(t),x(t),q(t),y(t)\big)\big\|\leq C'th^r,
\end{aligned}
\end{equation}
where $C'$ takes the maximum of $C$, $\widetilde{C}$, and the constants
symbolized by all the previously appearing $\mathcal{O}$-notations.

(b) Now, we reconsider the restriction relation $\|(p-x,q-y)\|\leq\varepsilon_0$.
Let $\delta(t)=\big\|\big(p(t)-x(t),q(t)-y(t)\big)\big\|$,
$\widetilde{\delta}(t)=\big\|\big(\widetilde{p}(t)-\widetilde{x}(t),\widetilde{q}(t)
-\widetilde{y}(t)\big)\big\|$, $\delta_0=\delta(0)$, and
$\widetilde{\delta}_0=\widetilde{\delta}(0)$. If $\delta_0=\widetilde{\delta}_0=0$,
then $\delta(t)=\widetilde{\delta}(t)=0$ holds for all $t\geq0$ such that
$\big(p(t),x(t),q(t),y(t)\big)\in \mathcal{B}_{\varepsilon_0}(\mathcal{N})$
and $\big(\widetilde{p}(t),\widetilde{x}(t), \widetilde{q}(t),\widetilde{y}(t)\big)
\in \mathcal{B}_{\varepsilon_0}(\mathcal{N})$ naturally hold.

If $\delta_0>0$, it derives from
\eqref{theo7-prf-eq2.2} and \eqref{theo7-prf-eq3} that
\begin{equation}\label{theo7-prf-eq17}
\begin{aligned}
&a(t)-a'(t)=\mathcal{O}(\delta_0+Ct\varepsilon^{N})
\qquad for\quad t\leq \varepsilon^{-N+1},
\\
&\theta(t)-\theta'(t)=\mathcal{O}\big(\delta_0+t\delta_0+C(t^2
+t|\log\varepsilon|^{\nu+1})\varepsilon^{N}\big)
\quad for\quad t^2\leq \varepsilon^{-N+1}.
\end{aligned}
\end{equation}
Since $N\geq\{\frac{2r}{\beta}+1,\frac{3r}{\beta},2\}$
and $\varepsilon\leq h^{\beta}$, we have
$\varepsilon^{N-1}\leq h^{2r}$, $\varepsilon^{N}\leq h^{3r}$,
and $\varepsilon^{-N+1}\geq h^{-2r}\geq h^{-r}$, which together
with \eqref{theo7-prf-eq17} lead to
\begin{equation}\label{theo7-prf-eq17.2}
\begin{aligned}
a(t)-a'(t)=\mathcal{O}(\delta_0+h^r),
\qquad
\theta(t)-\theta'(t)=\mathcal{O}(\delta_0+t\delta_0+h^r),
\end{aligned}
\end{equation}
for $t\leq h^{-r}$. A similar argument on \eqref{theo7-prf-eq10}
and \eqref{theo7-prf-eq11} also gives
\begin{equation}\label{theo7-prf-eq18}
\begin{aligned}
\widetilde{a}(t)-\widetilde{a}'(t)=\mathcal{O}(\delta_0+h^r), \qquad
\widetilde{\theta}(t)-\widetilde{\theta}'(t)=\mathcal{O}(\delta_0+t\delta_0+h^r),
\end{aligned}
\end{equation}
for $t\leq h^{-r}$ on noting that $a(0)-a'(0)$,
$\theta(0)-\theta'(0)$, $\widetilde{a}(0)-\widetilde{a}'(0)$, and
$\widetilde{\theta}(0)-\widetilde{\theta}'(0)$ are all in the
same magnitude of $\mathcal{O}(\delta_0)$. The estimates
\eqref{theo7-prf-eq17.2} and \eqref{theo7-prf-eq18} in turn derive
\begin{equation}\label{theo7-prf-eq19}
\delta(t)=\mathcal{O}(\delta_0+t\delta_0+h^r),
\qquad
\widetilde{\delta}(t)=\mathcal{O}(\delta_0+t\delta_0+h^r),
\end{equation}
for $t\leq h^{-r}$.

The restriction $\delta(t),\widetilde{\delta}(t)<\varepsilon_0$
along with $\varepsilon\leq\varepsilon_0$, $h^{r}\leq\varepsilon_0$, and
the estimation \eqref{theo7-prf-eq19} further requires
$t\leq\varepsilon_0/\delta_0$, which finally yields
\begin{equation}\label{theo7-prf-eq20}
\begin{aligned}
\big\|\big(\widetilde{p}(t),\widetilde{x}(t),\widetilde{q}(t),\widetilde{y}(t)\big)
-\big(p(t),x(t),q(t),y(t)\big)\big\|\leq C'th^r,
\end{aligned}
\end{equation}
for $t\leq\min\{h^{-r},\varepsilon_0/\delta_0\}$.

(c) The next step is to estimate the difference between the numerical
solution $(p_n,x_n,q_n,y_n)$ and the exact solution $\big(\widetilde{p}(t),
\widetilde{x}(t),\widetilde{q}(t),\widetilde{y}(t)\big)$ of the modified
Hamiltonian $\widetilde{\Gamma}(p,x,q,y)$. Under the symplectic change
of coordinates $(p,x,q,y)\mapsto(b,b',\varphi,\varphi')$, the numerical
solution $(p_n,x_n,q_n,y_n)$ is denoted by $(b_{n},b'_{n},\varphi_{n},
\varphi'_{n})$ in the coordinate $(b,b',\varphi,\varphi')$. Then,
the solution of \eqref{theo7-prf-eq7} advancing one step at the time $t_n$
with the initial values $(b_{n},b'_{n},\varphi_{n},\varphi'_{n})$ satisfies
\begin{equation}\label{theo7-prf-eq21}
\begin{aligned}
&\|{b}(t_n+h)-b_n\|\leq
\widetilde{C}h\widetilde{\varepsilon}^{\widetilde{N}},
\qquad
\|{b}'(t_n+h)-b'_n\|\leq
\widetilde{C}h\widetilde{\varepsilon}^{\widetilde{N}},
\\
&\|{\varphi}(t_n+h)-\omega_{h,\varepsilon,N}(b_n,b'_n)h-
\varphi_n\|\leq \widetilde{C}(h^2 +h|\log\widetilde{\varepsilon}|^{\nu+1})
\widetilde{\varepsilon}^{\widetilde{N}},
\\
&\|{\varphi}'(t_n+h)-\omega_{h,\varepsilon,N}(b'_n,b_n)h-
\varphi'_n\|\leq \widetilde{C}(h^2 +h|\log\widetilde{\varepsilon}|^{\nu+1})
\widetilde{\varepsilon}^{\widetilde{N}}.
\end{aligned}
\end{equation}

Because $\widetilde{\Gamma}(p,x,q,y)$ is an $s$th-order truncation
of the completely modified Hamiltonian whose solution advancing one
step starting with $(p_n,x_n,q_n,y_n)$ at the time $t_n$ is
$(p_{n+1},x_{n+1},q_{n+1},y_{n+1})$, it follows that
\begin{equation}\label{theo7-prf-eq22}
\begin{aligned}
&{b}(t_n+h) = b_{n+1} + \mathcal{O}(h^{s+1}),
&&{b}'(t_n+h) = b'_{n+1} + \mathcal{O}(h^{s+1}),
\\
&{\varphi}(t_n+h) = \varphi_{n+1} + \mathcal{O}(h^{s+1}),
&&{\varphi}'(t_n+h) = \varphi'_{n+1} + \mathcal{O}(h^{s+1}).
\end{aligned}
\end{equation}
By setting $s=2r$ and noting $\widetilde{\varepsilon}=h^{r}$ and $\widetilde{N}\geq3$,
the comparison between \eqref{theo7-prf-eq21} and \eqref{theo7-prf-eq22} gives
\begin{equation*}
\begin{aligned}
&b_{n+1}=b_n + \mathcal{O}(h^{2r+1}),
&&\varphi_{n+1} = \omega_{h,\varepsilon,N}(b_n,b'_n)h + \varphi_{n}
+ \mathcal{O}(h^{2r+1}),
\\
&b'_{n+1}=b'_n + \mathcal{O}(h^{2r+1}),
&&\varphi'_{n+1} = \omega_{h,\varepsilon,N}(b'_n,b_n)h + \varphi'_{n}
+ \mathcal{O}(h^{2r+1}),
\end{aligned}
\end{equation*}
which further yields
\begin{equation}\label{theo7-prf-eq23}
\begin{aligned}
&b_{n}=b_0 + \mathcal{O}(th^{2r}),
\qquad b'_{n}=b'_0 + \mathcal{O}(th^{2r}),
\\
&\varphi_{n} = \omega_{h,\varepsilon,N}(b_0,b'_0)h + \varphi_{0}
+ \mathcal{O}(th^{2r})+ \mathcal{O}(t^2h^{2r}),
\\
&\varphi'_{n} = \omega_{h,\varepsilon,N}(b'_0,b_0)h + \varphi'_{0}
+ \mathcal{O}(th^{2r})+ \mathcal{O}(t^2h^{2r}),
\end{aligned}
\end{equation}
for $t=nh$. On noting $\widetilde{\varepsilon}^{\widetilde{N}}\leq h^{3r}$
and $(b_0,b'_0,\varphi_0,\varphi'_0)=\big(\widetilde{b}(0),\widetilde{b}'(0),
\widetilde{\varphi}(0),\widetilde{\varphi}'(0)\big)$,
the comparison between \eqref{theo7-prf-eq8}-\eqref{theo7-prf-eq9}
and \eqref{theo7-prf-eq23} gives
\begin{equation}\label{theo7-prf-eq24}
\begin{aligned}
&b_{n}=\widetilde{b}(t) + \mathcal{O}(th^{2r}),
&&b'_{n}=\widetilde{b}'(t) + \mathcal{O}(th^{2r}),
\\
&\varphi_{n} = \widetilde{\varphi}(t)+\mathcal{O}(t^2h^{2r}),
&&\varphi'_{n} = \widetilde{\varphi}'(t)+\mathcal{O}(t^2h^{2r}).
\end{aligned}
\end{equation}
If $t\leq\min\{h^{-r},\varepsilon_0/\delta_0\}$, it is true that
\begin{equation}\label{theo7-prf-eq26}
\begin{aligned}
&b_{n}=\widetilde{b}(t) + \mathcal{O}(h^{r}),
&&b'_{n}=\widetilde{b}'(t) + \mathcal{O}(h^{r}),
\\
&\varphi_{n} = \widetilde{\varphi}(t)+\mathcal{O}(th^{r}),
&&\varphi'_{n} = \widetilde{\varphi}'(t)+\mathcal{O}(th^{r}),
\end{aligned}
\end{equation}
which yields the estimation
\begin{equation}\label{theo7-prf-eq25}
\big\|(p_n,x_n,q_n,y_n)-\big(\widetilde{p}(t),\widetilde{x}(t),
\widetilde{q}(t),\widetilde{y}(t)\big)\big\|\leq \widetilde{C}_0th^{r},
\end{equation}
for $t\leq\min\{h^{-r},\varepsilon_0/\delta_0\}$,
where the constant $\widetilde{C}_0$ is independent of the stepsize $h$.

(d) The previous estimation in \eqref{theo7-prf-eq25} is estimated not under the condition
$(p_n,x_n,q_n,y_n)\in\mathcal{B}_{\varepsilon_0}(\mathcal{N})$. It is noted that
any numerical solution $(p_n,x_n,q_n,y_n)$ without this restriction
cannot be guaranteed the applicability of the symplectic change
of coordinates $(p,x,q,y)\mapsto(b,b',\varphi,\varphi')$. Hence, the
restriction $(p_n,x_n,q_n,y_n)\in\mathcal{B}_{\varepsilon_0}(\mathcal{N})$
for the numerical solution $(p_n,x_n,q_n,y_n)$ should be added.
The triangle inequality together with \eqref{theo7-prf-eq20} and
\eqref{theo7-prf-eq25} finally gives the result \eqref{theorem7-subequ1}
for $t=nh$ with such $n$ that  $t=nh\leq\min\{h^{-r},\varepsilon_0/\delta_0\}$
and $(p_n,x_n,q_n,y_n)\in\mathcal{B}_{\varepsilon_0}(\mathcal{N})$.

(e) Since the Hamiltonian $\Gamma(p,x,q,y) $ possesses $\tilde{d}$
first integrals $I_j^*(p,x,q,y)~(j=1,\ldots,\tilde{d})$,  the
symplectic change of coordinates
$(p,x,q,y)\mapsto(a,a',\theta,\theta')$ could be determined such
that
$$\widetilde{I}_j^*\big(a(p,x,q,y),a'(p,x,q,y)\big)=
I_j^*(p,x,q,y)=I_j^*(p_0,x_0,q_0,y_0)$$ are constants for
$j=1,\ldots,\tilde{d}$ during the evolution of the extended
Hamiltonian $\Gamma(p,x,q,y) $. Due to the
$\mathcal{O}(h^r)$-closeness to the identity of the symplectic
change of coordinates
$(a,a',\theta,\theta')\mapsto(b,b',\varphi,\varphi')$, it follows
from \eqref{theo7-prf-eq26} that
\begin{equation}\label{theo7-prf-eq27}
a_{n}=\widetilde{a}(t) + \mathcal{O}(h^{r}),
\qquad
a'_{n}=\widetilde{a}'(t) + \mathcal{O}(h^{r}).
\end{equation}
The triangle inequality together with \eqref{theo7-prf-eq27}  and
\eqref{theo7-prf-eq14} gives
\begin{equation}\label{theo7-prf-eq28}
\|a_n - a(t)\|\leq C_0h^r,
\qquad
\|a'_n - a'(t)\|\leq C_0h^r,
\end{equation}
which implies
\begin{equation*}
\widetilde{I}_j^*\big(a_n,a'_n\big)
-\widetilde{I}_j^*\big(a(t),a'(t)\big)=\mathcal{O}(h^r),
\end{equation*}
for $j=1,\ldots,\tilde{d}$. This finally gives the result
\eqref{theorem7-subequ2}.
\end{proof}
\begin{remark}\label{remark4}
It is worth emphasizing that the requirement
$(p_n,x_n,q_n,y_n)\in\mathcal{B}_{\varepsilon_0}$ for $n$ in this
theorem should be considered in a precise sense that except for the
numerical solution $(p_n,x_n,q_n,y_n)$, all other intermediate
variables such as
$\tilde{p}_{n},\tilde{x}_{n},\tilde{q}_{n},\tilde{y}_{n}$ in
Definition~\ref{definition2} or Definition~\ref{definition3} have to
satisfy the condition
$(\tilde{p}_{n},\tilde{x}_{n},\tilde{q}_{n},\tilde{y}_{n}) \in
{B}_{\varepsilon_0}(\mathcal{N})$. It is also pointed
out that the effective estimated time interval is dependent not
only on the constants $\varepsilon_0$ and $\delta_0$, but also on
the numerical discrepancy $\|(p_n-x_n,q_n-y_n)\|$.  The optimal case
occurs if $\delta_0=0$ and $\|(p_n-x_n,q_n-y_n)\|$ holds for all
$n\in\mathbb{N}$, where the effective estimated time interval has
the same estimation as classical symplectic integrators applied to
an  integrable (or a near-integrable) Hamiltonian system.
\end{remark}
The results in Theorem~\ref{theorem7} also hold for the integrable
Hamiltonian $H(p,q)$ as $\varepsilon=0$, where the parameter
$\varepsilon_0$ is still needed to bound $\|(p-x,q-y)\|$ so that the
extended Hamiltonian $\Gamma(p,x,q,y)$ is near-integrable. This
theorem states the linear error growth and near-preservation of
first integrals of symplectic extended phase space methods
when applied to the original (near-) integrable
Hamiltonian system $H(p,q)$. The behavior is similar to that of
classical symplectic methods derived in the original phase space but
with different estimated time intervals. With
Theorem~\ref{theorem7}, we immediately derive the error estimation
on the single-factor or double-factor explicit symplectic integrator
defined by Definition~\ref{definition4} or
Definition~\ref{definition5}.

\begin{theorem}\label{theorem8}
Consider an $r$th-order single-factor explicit symplectic integrator
determined by Definition~\ref{definition4} or
double-factor explicit symplectic integrator determined by
Definition~\ref{definition5} when applied to the original (near-)
integrable Hamiltonian $H(p,q)$ which has $d'~(\leq d)$ first
integrals $\widetilde{I}_j(p,q)~(j=1,\ldots,d')$. Suppose that the
parameters satisfy the conditions in Theorem~\ref{theorem7}, then
\begin{subequations}
\begin{eqnarray}\label{theorem8-equ-parent}
&&\big\|(p_n,q_n)-\big(p(t),q(t)\big)\big\|\leq C_0th^r,
\label{theorem8-subequ1}
\\
&&\big|\widetilde{I}_j(p_n,q_n)-\widetilde{I}_j(p_0,q_0)\big|\leq
C_0h^r,\quad j=1,\ldots,d', \label{theorem8-subequ2}
\end{eqnarray}
\end{subequations}
hold for $t=nh\leq h^{-r}$.
\end{theorem}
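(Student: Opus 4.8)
The plan is to deduce Theorem~\ref{theorem8} from Theorem~\ref{theorem7} by recognizing the single-factor (Definition~\ref{definition4}) and double-factor (Definition~\ref{definition5}) integrators as the restrictions to the original phase space of an extended-phase-space integrator that leaves the submanifold $\mathcal{N}$ invariant, and then specializing Theorem~\ref{theorem7} to the case of vanishing discrepancy.

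First I would identify the relevant extended map. One step of Definition~\ref{definition4} (or~\ref{definition5}) applies $\Phi_h$ to the lifted point $(p_n,p_n,q_n,q_n)\in\mathcal{N}$ and then forms the weighted averages that return the result to $\mathcal{N}$ as $(p_{n+1},p_{n+1},q_{n+1},q_{n+1})$. By Theorem~\ref{theorem2} this averaging is realized by a symplectic projection $M$, so the effective extended integrator is $\overline{\Phi}_h=M\circ\Phi_h$, which satisfies $\overline{\Phi}_h(\mathcal{N})\subset\mathcal{N}$ and induces precisely $\widetilde{\Phi}_h$ through Lemma~\ref{theorem1}. Since the two copies $(\tilde{p}_{n+1},\tilde{q}_{n+1})$ and $(\tilde{x}_{n+1},\tilde{y}_{n+1})$ agree to order $\mathcal{O}(h^{r+1})$ after one step from $\mathcal{N}$, the projection $M$ perturbs $\Phi_h$ only by $\mathcal{O}(h^{r+1})$, so $\overline{\Phi}_h$ is again an $r$th-order symplectic method for $\Gamma(p,x,q,y)$. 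Moreover, Lemma~\ref{lemma1} supplies a globally well-defined modified Hamiltonian for $\widetilde{\Phi}_h$, equivalently for $\overline{\Phi}_h$ on $\mathcal{N}$, so the governing hypothesis of Theorem~\ref{theorem7} is in force.

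I would then apply Theorem~\ref{theorem7} to $\overline{\Phi}_h$ with initial data $(p_0,p_0,q_0,q_0)\in\mathcal{N}$, so that the initial discrepancy vanishes, $\delta_0=\|(p_0-p_0,q_0-q_0)\|=0$. Because $\mathcal{N}$ is invariant under $\overline{\Phi}_h$, we have $x_n=p_n$ and $y_n=q_n$ and hence $\|(p_n-x_n,q_n-y_n)\|\equiv0$ for all $n$; this is exactly the optimal situation noted in Remark~\ref{remark4}. The membership $(p_n,x_n,q_n,y_n)\in\mathcal{B}_{\varepsilon_0}(\mathcal{N})$ then holds trivially and $\varepsilon_0/\delta_0=+\infty$, so the admissible window is $t=nh\leq\min\{h^{-r},\varepsilon_0/\delta_0\}=h^{-r}$. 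On $\mathcal{N}$ the exact flow of $\Gamma$ is the duplicated flow $\big(p(t),p(t),q(t),q(t)\big)$ of $H$, whence \eqref{theorem7-subequ1} restricts to $\|(p_n,q_n)-\big(p(t),q(t)\big)\|\leq C_0th^r$, which is \eqref{theorem8-subequ1}.

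For the first integrals I would match each $\widetilde{I}_j(p,q)$ of $H$ with a first integral $I_j^*$ of the near-integrable extended system \eqref{nearInt-extH}. In the action--angle normal form the conserved actions of $\Gamma$ split into $(a,a')$, and on $\mathcal{N}$ the constraint $p=x,\ q=y$ forces $a=a'$, so the action attached to $\widetilde{I}_j$ in the original system coincides with the corresponding extended action along $\mathcal{N}$. Restricting \eqref{theorem7-subequ2} to $\mathcal{N}$ then gives $\big|\widetilde{I}_j(p_n,q_n)-\widetilde{I}_j(p_0,q_0)\big|\leq C_0h^r$, i.e., \eqref{theorem8-subequ2}. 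The delicate step is this last identification: one must check that the first integrals of $H$ lift consistently to integrals of $\Gamma$ that Theorem~\ref{theorem7} controls, and that their restriction to $\mathcal{N}$ recovers the original $\widetilde{I}_j$, for which the synchronization $a=a'$ on $\mathcal{N}$ is the decisive feature. Everything else is a direct transcription of Theorem~\ref{theorem7} in the degenerate case $\delta_0=0$.
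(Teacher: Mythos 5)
Your overall route for the error bound \eqref{theorem8-subequ1} is essentially the paper's: write one step as a projection composed with $\Phi_h$, observe that $\mathcal{N}$ is invariant so the numerical trajectory satisfies $x_n=p_n$, $y_n=q_n$, invoke Lemma~\ref{lemma1} for the globally well-defined modified Hamiltonian, and apply Theorem~\ref{theorem7} in the degenerate case $\delta_0=0$ (with the $\mathcal{O}(h^{r+1})$ agreement of the two copies keeping the intermediate stages $(\tilde{p}_{n+1},\tilde{x}_{n+1},\tilde{q}_{n+1},\tilde{y}_{n+1})$ inside $\mathcal{B}_{\varepsilon_0}(\mathcal{N})$, which is what Remark~\ref{remark4} actually demands), so the window is $t=nh\le h^{-r}$. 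One repair is needed even here: you justify the symplecticity of the averaging step by Theorem~\ref{theorem2}, but that theorem does not cover the single-factor integrator, where $\lambda=\xi=(\lambda_0,\dots,\lambda_0)$; the paper notes at the end of Section~\ref{sec:existence} that for $\lambda=\xi$ a \emph{linear} symplectic transformation realizing the averaging need not exist, which is precisely why Section~\ref{sec:symp-projection} introduces the translation. The paper's proof accordingly writes the projection as a translation $T_n$ (symplectic by the argument of Theorem~\ref{theorem4}) and sets $\Psi_h^{(n)}=T_n\circ\Phi_h$; with that substitution your first part goes through for both integrators.

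The genuine gap is in \eqref{theorem8-subequ2}. You propose to ``match each $\widetilde{I}_j(p,q)$ of $H$ with a first integral $I_j^*$ of the near-integrable extended system'' and then restrict \eqref{theorem7-subequ2} to $\mathcal{N}$. But \eqref{theorem7-subequ2} applies only to functions that are exact first integrals of the extended Hamiltonian on an open set of $T^*\mathbb{R}^{2d}$, and the $\widetilde{I}_j$ generally admit no such lift: as the paper stresses in Section~\ref{sec:error}, $\widetilde{I}_j(p,q)+\widetilde{I}_j(x,y)$ (or $\widetilde{I}_j(p,y)+\widetilde{I}_j(x,q)$) is in general \emph{not} conserved by $\Gamma$ --- this is exactly why $\Gamma$ can be nonintegrable and chaotic even when $H$ is completely integrable. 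The synchronization $a=a'$ on $\mathcal{N}$ does not rescue this: conservation of a quantity along the flow restricted to $\mathcal{N}$ is not the hypothesis under which \eqref{theorem7-subequ2} was proved, so there is nothing to ``restrict.'' The paper bypasses the lifting problem entirely: it uses the internal estimate \eqref{theo7-prf-eq28} from the proof of Theorem~\ref{theorem7}, namely $\|a_n-a(t)\|\le C_0h^r$ for the action variables of the near-integrable normal form on $\mathcal{B}_{\varepsilon_0}(\mathcal{N})$, and then chooses the symplectic change of coordinates $(p,q)\mapsto(a,\theta)$ of the original system so that $a^j(p,q)=\widetilde{I}_j(p,q)$; since $\widetilde{I}_j$ is exactly constant along the exact solution, \eqref{theorem8-subequ2} follows. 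Your argument becomes correct once the appeal to \eqref{theorem7-subequ2} is replaced by this action-variable estimate.
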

\begin{proof}
Let $T_{n}$ denote the translation mapping that projects
$(\tilde{p}_{n+1},\tilde{x}_{n+1},\tilde{q}_{n+1},\tilde{y}_{n+1})$
to $(p_{n+1},x_{n+1},q_{n+1},y_{n+1})\in \mathcal{N}$. Usually,
$T_n$ varies with different $n$. Let
$\Psi_{h}^{(n)}=T_n\circ\Phi_h$, where $\Phi_h$ is just the
$r$th-order explicit extended phase space method. Then,
$\Psi_{h}^{(n)}$ could be viewed as an $r$th-order single-factor or
double-factor explicit symplectic integrator for the extended
Hamiltonian $\Gamma(p,x,q,y)$. For the imposed initial conditions
$(p_0,q_0)=(x_0,y_0)$, the numerical solution $(p_n,x_n,q_n,y_n)$ of
$\Psi_{h}^{(n)}$ for the extended Hamiltonian $\Gamma(p,x,q,y)$
satisfies $(p_n,q_n)=(x_n,y_n)$, i.e., $(p_n,x_n,q_n,y_n)\in
\mathcal{N}$ for all $n\in\mathbb{N}$. It follows from
Lemma~\ref{lemma1} that if the initial conditions satisfy
$(p_0,q_0)=(x_0,y_0)$, there exists a globally well-defined modified
Hamiltonian $\widetilde{\Gamma}(p,x,q,y)$ for the symplectic
integrator $\Psi_{h}^{(n)}$, which is just two identical copies of
the modified Hamiltonian $\widetilde{H}(p,q)$ obtained in
Lemma~\ref{lemma1}, i.e.,
$\widetilde{\Gamma}(p,x,q,y)=\widetilde{H}(p,q)+\widetilde{H}(x,y)$.

In addition, $(\tilde{p}_{n+1},\tilde{x}_{n+1},\tilde{q}_{n+1},
\tilde{y}_{n+1})= \Phi_h(p_n,p_n,q_n,q_n)$ means that the intermediate variables
$\tilde{p}_{n+1},\tilde{x}_{n+1},\tilde{q}_{n+1},\tilde{y}_{n+1}$ satisfy
$\tilde{p}_{n+1}-\tilde{x}_{n+1}=\mathcal{O}(h^{r+1})$ and
$\tilde{q}_{n+1}-\tilde{y}_{n+1}=\mathcal{O}(h^{r+1})$ for all $n\in\mathbb{N}$,
where the constant symbolized by the $\mathcal{O}$-notation is independent of
$n$ and $h$. Due to the restriction on the stepsize that $h\leq\{h_0,\varepsilon_0^{1/r}\}$,
it consequently follows that $h^{r+1}<\varepsilon_0$ and thus
$\tilde{\delta}_n=\|(\tilde{p}_n-\tilde{x}_n,\tilde{q}_n-
\tilde{y}_n)\|<\varepsilon_0$ holds for all $n\in\mathbb{N}$.
That is, the time interval of the
linear error growth of the symplectic integrator $\Psi_h^{(n)}$ starting with $(p_0,q_0)=(x_0,y_0)$ will be
$t=nh\leq h^{-r}$. Then, the result \eqref{theorem8-subequ1} immediately follows
from $(p_n,q_n)=(x_n,y_n)$, $\big(p(t),q(t))=(x(t),y(t)\big)$, and the estimation
\eqref{theorem7-subequ1}.

Selecting a suitable  symplectic change of coordinates
$(p,q)\mapsto(a,\theta)$ such that
$a^{j}(p,q)=\widetilde{I}_j(p,q)=\widetilde{I}_j(p_0,q_0)$ for
$j=1,\ldots,d'$, we obtain the result
\eqref{theorem8-subequ2} from \eqref{theo7-prf-eq28}. This completes
the proof.
\end{proof}
Theorem~\ref{theorem8} shows that both the single-factor explicit
symplectic integrator defined by Definition~\ref{definition4} and
the double-factor explicit symplectic integrator defined by
Definition~\ref{definition5} absolutely give the linear growth of
global errors and the uniformly near preservation of first integrals
when applied to (near-) integrable Hamiltonian systems. We here give
an interpretation for the numerical performance of the existing
extended phase space methods: (i) the symmetric projection
symplectic integrator, i.e., the semiexplicit symplectic methods
proposed by Jayawardana \& Ohsawa \cite{Jayawardana2023};
(ii) Pihajoki's original extended phase space methods
without mixing \cite{Pihajoki2015}; (iii) Tao's extended phase space
methods \cite{Tao2016b}.

For the semiexplicit symplectic methods in \cite{Jayawardana2023}, because
$\delta_n=0$ holds for all $n\in \mathbb{N}$,  the same linear error growth and near-preservation
of first integrals as in Theorem~\ref{theorem8} are soundly expected provided
that the implicit iteration during the implementation of the method is convergent.

For Pihajoki's extended phase space methods \cite{Pihajoki2015}
such as \eqref{ext-leapfrog} where no additional
operation such as the mixing is conducted on the numerical solution
$(p_n,x_n,q_n,y_n)$, the linear error growth stated by
\eqref{theorem7-subequ1} may give the worst estimation
$\delta_n=\|(p_n-x_n,q_n-y_n)\|=\mathcal{O}(th^{r})$ as
$\big(p(t),q(t)\big)=\big(x(t),y(t)\big)$. Then,  the restriction
$(p_n,x_n,q_n,y_n) \in\mathcal{B}_{\varepsilon_0}$ will reduce the
time interval of the linear error growth by a small factor of
$\varepsilon_0<1$, i.e., from $h^{-r}$ to $\varepsilon_0h^{-r}$. On
noting the small magnitude of the perturbation parameter
$\varepsilon_0$, this point provides a reasonable interpretation for
the short-term reliability of Pihajoki's extended phase space
methods.

As to Tao's methods, the extended Hamiltonian becomes
\begin{equation}\label{Tao-extTwo}
\overline{\Gamma}(p,x,q,y)={\Gamma}(p,x,q,y)+ H_C(p,x,q,y),
\end{equation}
where $H_C(p,x,q,y)=\frac{\omega}{2}\big(\|p-x\|^2 +\|q-y\|^2 \big)$
for some positive $\omega\in\mathbb{R}^+$. Due to the boundedness of
both the Hamiltonian $\overline{\Gamma}(p,x,q,y)$ and the modified
Hamiltonian $\widehat{\Gamma}(p,x,q,y)$ of the symplectic method
applied to $\overline{\Gamma}(p,x,q,y)$, the discrepancies $\|p-x\|$
and $\|q-y\|$ of both the exact solution
$\big(p(t),x(t),q(t),y(t)\big)$ and the numerical solution
$(p_n,x_n,q_n,y_n)$ are bounded by $\mathcal{O}(1/\sqrt{\omega})$
for $t\leq\sqrt{\omega}$. Hence, as analyzed in \cite{Tao2016b},
there exists $\omega_0>0$ such that for any $\omega\geq\omega_0$ the
linear error growth of an $r$th-order Tao's symplectic method when
applied to $\overline{\Gamma}(p,x,q,y)$ holds for
$t\leq\min\{\omega^{-1}h^{-r},\omega^{1/2}\}$ for the rare
integrable case of $\overline{\Gamma}(p,x,q,y)$ (More precisely, the
integrability should be attributed to $H(p,q)$ rather than
$\overline{\Gamma}(p,x,q,y)$, since a large magnitude of
$\delta_0=\mathcal{O}(1/\sqrt{\omega})$ will result in the
nonintegrability of $\overline{\Gamma}(p,x,q,y)$ even for a large
$\omega$ once $H(p,q)$ is nonintegrable. That is, the estimation
made  in \cite{Tao2016b} should be incorporated with a small
$\delta_0$. In fact, the complete integrability of
$\overline{\Gamma}(p,x,q,y)$ will lead to the classical estimation
on the linear error growth that the effective time interval for $t$
does not depend on $\omega$ even if there is no the restriction
$\delta_0=0$).

Here, we give another estimation by applying Theorem~\ref{theorem7}
to Tao's methods \cite{Tao2016b}. A multiscale analysis (see Appendix of  \cite{Tao2016b}) shows that
for sufficiently large $\omega\geq\omega_0$, the discrepancy $\widetilde{\delta}(t)
=\big\|\big(\widetilde{p}(t)-\widetilde{x}(t),\widetilde{q}(t)
-\widetilde{y}(t)\big)\big\|$ of the modified Hamiltonian
$\widehat{\Gamma}(p,x,q,y)$ starting with $\widetilde{\delta}_0={\delta}_0=0$
satisfies
\begin{equation}\label{est-tilde-delta}
\widetilde{\delta}(t)=\mathcal{O}\big(\omega^{-3/2}tC_3
\exp\big((\omega^{-1/2}C_1+C_2)t\big)\big).
\end{equation}

Similarly to \eqref{nearInt-extH}, the extended
Hamiltonian $\overline{\Gamma}(p,x,q,y)$ could be expressed as
\begin{equation*}
\begin{aligned}
\overline{\Gamma}(p,x,q,y)= &H_0(p,q) + H_0(x,y)
+ \varepsilon \big( H_1(p,q)+H_1(x,y)+G(p,x,q,y)\big)
\\
&+H_C(p,x,q,y).
\end{aligned}
\end{equation*}
To express $\overline{\Gamma}(p,x,q,y)$ as a near-integrable
Hamiltonian system with a small perturbed parameter $\varepsilon$,
it requires $H_C\leq \varepsilon_0$. In addition, to apply
Theorem~\ref{theorem7}, we also need ${\delta}(t)\leq\varepsilon_0$.
The two inequalities become
$\omega\widetilde{\delta}^2(t)\leq2\varepsilon_0$ and
$\widetilde{\delta}(t)\leq\varepsilon_0$ when applied to the
modified Hamiltonian $\widehat{\Gamma}(p,x,q,y)$ and lead to the
estimation
\begin{equation}\label{est-tao}
\big\|(p_n,q_n)-\big(p(t),q(t)\big)\big\|=\mathcal{O}(\omega th^{r})
\end{equation}
for
$t\leq\min\{\omega^{-1}h^{-r},\omega^{1/2},\sqrt{2}\omega\varepsilon_0^{1/2},
\omega^{3/2}\varepsilon_0\}$ by applying Theorem~\ref{theorem7} and
noting $\widetilde{\varepsilon}=\omega h^{r}$. For the
case of $1\leq\omega<\omega_0$, since the estimation
\eqref{est-tilde-delta} is no longer satisfied, the general
estimation is $\widetilde{\delta}(t)=\mathcal{O}(\omega t h^{r})$.
It then follows from
$\omega\widetilde{\delta}^2(t)\leq2\varepsilon_0$ and
$\widetilde{\delta}(t)\leq\varepsilon_0$ that the estimation
\eqref{est-tao} holds only for  $t\leq\min\{\varepsilon_0\omega^{-1}
h^{-r},\sqrt{2}\varepsilon_0^{1/2}\omega^{-3/2}h^{-r}\}$. For the
last case where $0\leq\omega<1$, the estimation \eqref{est-tao}
holds for $t\leq \varepsilon_0h^{-r}$ as $\varepsilon_0<1$, which is
the same as Pihajoki's methods.

One may argue that the effective estimated time interval of the
linear error growth of Tao's symplectic methods could be extended
just by enlarging the parameter $\omega$ and reducing the stepsize
$h$. However, as previously analyzed, the  appearance of $\omega$ in
the global error means that enlarging $\omega$ also results in an
enlarged global error with an unchanged $h$. Another inconvenience
of Tao's method is that the stepsize should be bounded by $h\leq
\omega^{-1/r}$.

Overall, to our best knowledge and as analyzed above,
we may claim that if no other operation such as projection onto the
submanifold $\mathcal{N}$ is conducted on the numerical solutions
of Pihajoki's or Tao's symplectic extended phase space methods, the
linear error growth only holds for a short time, which will be much
shorter than that of the single-factor explicit symplectic
integrator defined by Definition~\ref{definition4},  the
double-factor explicit symplectic integrator defined by
Definition~\ref{definition5}, and  the semiexplicit methods with
symmetric projection proposed in \cite{Jayawardana2023}.

We finally point out that the near-preservation of first integrals
established in Theorem~\ref{theorem7} and Theorem~\ref{theorem8}
only holds for at most polynomially lone time interval as $t\leq
h^{-r}$. However, the single consideration of the Hamiltonian energy
gives rise to the following estimation \cite[Theorem
IX.8.1]{Hairer2006}
\begin{equation*}
H(p_n,q_n)-H(p_0,q_0)=\mathcal{O}(h^r)
\end{equation*}
for exponentially long time interval as $t\leq e^{h_0/2h}$ for an
$r$th-order symplectic method when applied to the
Hamiltonian $H(p,q)$ regardless of its integrability, where the
analyticity of $H(p,q)$ and the existence of the globally
well-defined modified Hamiltonian are the main imposition. That is,
the near-preservation of energy of symplectic integrators may hold
for a much longer time interval than the linear error growth, no
matter whether the Hamiltonian is near-integrable or not.

\subsection{Nonintegrable Hamiltonian systems}\label{sec:nonint}
The motion in the near-integrable Hamiltonian system $H(p,q)$ with
the initial values satisfying the diophantine condition is regular,
i.e., periodic or quasi-periodic. That is, the canonical conjugate
variable $(p,q)$ could be transformed by a symplectic transformation
into action-angle variables $(a,\theta)$ at least in a finite time
interval. However, if the perturbation parameter $\varepsilon$ is
large enough to go beyond the threshold value $\varepsilon_0$, or
the initial values do not satisfy the diophantine condition, the
motion in such a Hamiltonian system may be chaotic such that the
backward error analysis based on the modified Hamiltonian for
symplectic methods is no longer valid.

To present a clear estimation of the error growth of symplectic
integrators when applied to nonintegrable Hamiltonian
systems, we look through the classifications of trajectories, which
are labeled as regular or chaotic.  As investigated by Henon \&
Heiles \cite{Henon1964}, the picture of nonintegrable Hamiltonian
systems is mostly like:
\begin{enumerate}
  \item there is an infinite number of chains of islands, where each chain of islands
  corresponds to a stable periodic or quasi-periodic trajectory;
  \item the set of all the islands is dense everywhere;
  \item there exists an ergodic sea on which the ergodic trajectory is dense.
\end{enumerate}
Moreover, it follows from \emph{Carath\'{e}odory--Jacobi--Lie}
theorem \cite{Lee2013} that under mild regularity assumptions, for a
fixed point $z=(p,q)\in T^*\mathbb{R}^d$ (apart from singular points
of the Hamiltonian), there exist locally $d$ first integrals for the
Hamiltonian $H(p,q)$ in a small neighborhood of $z$. That is, the
Hamiltonian $H(p,q)$ is locally integrable \cite{Kozlov1983}. As
displayed by the picture of  Henon \& Heiles \cite{Henon1964}, an
intuitive but unproved deduction naturally follows that the
existence of the $d$ local isolating first integrals (or
quasi-integrals) of the point in the chain of islands could be
extended to the neighborhood of $\phi_H^t(z)$ for any $t$ that
included in the chain.

For the ergodic sea, although there exist regular trajectories, one
cannot distinguish a regular trajectory from chaotic trajectories in
the sea with the up-to-date techniques such as the Lyapunov
characteristic exponent or the power spectra, because of the
denseness of ergodic (chaotic) trajectories. In what
follows, we admit the following hypothesis to make the error
analysis based on the investigation in the literature.
\begin{hypothesis}\label{hypothesis-1}
There exist some connected open sets (regular regions) in the phase
space such that the nonintegrable Hamiltonian in each set is locally
integrable with $d$ first integrals (or quasi-integrals). The
connected set is invariant under the phase flow of the Hamiltonian
and is like a fiber pipe such that the evolution of the neighborhood
of a point (a set of initial conditions) is still included in the
connected open set. The set containing all points outside the
regular regions is the ergodic region, where the trajectory starting
with the point in this region is most likely chaotic.
\end{hypothesis}
It is yielded from this hypothesis that the distance
of two neighboring points in the same regular region increases about
linearly with time, while it grows roughly exponentially for two
neighboring points in the ergodic region. This point is also well
consistent with the numerical investigation in \cite{Henon1964}.
\subsubsection{Trajectories in regular region}\label{sec:regular}
In this subsection, we consider the trajectory lying in the regular
region. Suppose that the initial value $z_0=(p_0,q_0)$ of the
nonintegrable Hamiltonian $H(p,q)$ is in a connected regular open
region $D_0$. According to Hypothesis~\ref{hypothesis-1}, we define
the minimum regular radius $r(z_0,D_0)$ of a point
$z_0$ in the regular region $D_0$ as follows:
\begin{equation*}
r_0(z_0,D_0) :=\inf_{\substack{\forall\,t\in \mathbb{R} \\ z=\phi_H^{t}(z_0)}}
\sup\Big\{ r\in\mathbb{R}^{+}\,\big|\,
\mathcal{B}_{r}(z)\subset D_0\Big\},
\end{equation*}
where $\phi_H^{t}$ is the phase flow of the Hamiltonian $H(p,q)$ and
$\mathcal{B}_{r}(z)$ is the open ball in $T^*\mathbb{R}^d$ with
radius $r$ and center $z$. According to \emph{Liouville's theorem},
symplectic transformation preserves volume in the phase space for
Hamiltonian systems, we have $r_0(z_0,D_0)>0$ if there exists an
open ball $\mathcal{B}_{r}(z_0)$ with $r>0$ such that
$\mathcal{B}_{r}(z_0)\subset D_0$.

Suppose that the initial values $(p_0,q_0)$ satisfy the diophantine
condition \eqref{diophantine} and the stepsize $h$ is sufficiently
small such that
$h\leq\min\{h_0,\varepsilon_0^{1/r},r_0^{1/(r+1)}\}$. It
then follows from Theorem~\ref{theorem8} that the $r$th-order
single-factor or double-factor explicit symplectic integrator
 has the following  conservative estimation
\begin{equation*}
\big\|(p_n,q_n)-\big(p(t),q(t)\big)\big\|=\mathcal{O}(th^{r})
\end{equation*}
for at least $t\leq\min\{h^{-r},r_0h^{-r}\}$ in the worst case,
where the global error vector $\big(p_n-p(t),q_n-q(t)\big)$ is
always perpendicular to the instantaneous vector filed
$J_{2d}^{-1}\nabla H$ and thus the numerical solution $(p_n,q_n)$
reaches outside the regular region in the fastest path.

It is known from the previous estimation that the effective
estimated time interval of linear error growth largely depends on
the minimum regular radius $r_0(z_0,D_0)$. Usually, a smaller
$r_0(z_0,D_0)$ indicates a shorter effective estimated time
interval. Since $r_0(z_0,D_0)$ depends on both $z_0$ and $D_0$, the
effective estimated time interval varies with different regular
regions and with different initial points in the regular region. For
initial points in a given regular region, the closer to the boundary
of the regular region, the shorter effective estimated time interval
will be. Due to the existence of stable periodic orbit in the middle
of a regular chain of islands \cite{Henon1964}, we consider that
such a stable periodic orbit corresponds to the maximum effective
estimated time interval in this given regular region. In addition,
due to the infinite number of chains of islands, i.e., the infinite
number of regular regions, there may not exist a uniform positive
lower bound for the maximum effective estimated time interval.  This
point leads to an issue for the application of the proposed
symplectic integrators to nonintegrable Hamiltonian systems, that
is, the linear error growth of the symplectic integrator starting
with the point in one regular region holds for a certain time
interval, while the time interval may be sharply decreased once
starting with the point in another regular region even for the same
Hamiltonian.
\subsubsection{Trajectories in ergodic region} \label{sec:ergodic}
As we have previously
analyzed, one cannot distinguish a regular trajectory from chaotic
trajectories in the ergodic region. In this sense, we consider that
the perturbation $(\delta p,\delta q)$ to the trajectory starting with
the initial values $(p_0,q_0)$ located in the ergodic region is exponentially
sensitive to the sufficiently small initial perturbation $(\delta p_0,\delta q_0)$
and thus chaotic.

Suppose that the trajectory $\big(p(t),q(t)\big)$ starting with $(p_0,q_0)$ is chaotic,
that is, it has a positive Lyapunov characteristic exponent
\cite{Benettin1986,Benettin1994,Mei2018} defined as follows
\begin{equation*}
\sigma=
\lim_{\substack{t\rightarrow+\infty \\ d_{0}\rightarrow 0^+}}
\frac{1}{t}\ln\frac{d(t)}{d_0},
\end{equation*}
where $d(t)=\big\|\big(p(t),q(t)\big)-\big(p'(t),q'(t)\big)\big\|$, $\big(p'(t),q'(t)\big)$ is the
neighboring trajectory of $\big(p(t),q(t)\big)$ starting with initial conditions
$(p_0+\delta p_0,q_0+\delta q_0)$, and $d_0=d(0)=\|(\delta p_0,\delta q_0)\|$
is the magnitude of the initial perturbation.

It is a common view that the exponential divergence of nearby
trajectories causes numerical errors to be exponentially magnified
with time \cite{Hayes2005,Hayes2007}. Although the forward error
analysis for an $r$th-order symplectic method can yield the
exponential growth as follows
\begin{equation*}
\big\|(p_n,q_n)-\big(p(t),q(t)\big)\big\|=\mathcal{O}(Ce^{tL}h^{r})
\end{equation*}
with some positive constants $C$ and $L$, however, this estimation
is valid for general numerical methods and not limited to integrable
or nonintegrable Hamiltonian systems. That is, the exponential
growth of symplectic methods applied to chaotic trajectories of
nonintegrable Hamiltonian systems is indefinite because
the exponential estimation is due to a technical scaling of
inequalities.

To show the exponential error growth of symplectic integrators
caused by the exponential sensitivity of chaotic trajectories to the
initial perturbation, the backward error analysis
\cite{Hairer1994,Hairer1997,Reich1999} based on the modified
Hamiltonian may be invalid, since it is unclear about the dynamic
behavior of the trajectory
$\big(\widetilde{p}(t),\widetilde{q}(t)\big)$ in the modified
Hamiltonian starting with the same initial values $(p_0,q_0)$.
Hence, it is impossible to estimate the discrepancy
$\big({p}(t),{q}(t)\big)-\big(\widetilde{p}(t),\widetilde{q}(t)\big)$
without the information of
$\big(\widetilde{p}(t),\widetilde{q}(t)\big)$.

We here adopt the shadowing theory
\cite{Chow1992,Coomes1995,Coomes1997,Hayes2005,Hayes2007}, i.e.,
another backward error analysis method that modifies the initial
values while remains the Hamiltonian $H(p,q)$ unchanged, to estimate
the global error of a symplectic integrator. We first note that
although the transversal shadowing method has a longer shadowing
time than the map shadowing method \cite{Coomes1995}, the former
rescales the time that does not match the discrete times
$\{t_n=nh\}_{n=1}^{N_T}$  while the latter does. As our attention is
focused on the difference between the numerical solution and the
exact solution at time $t_n$, not the global closeness of the
numerical solution to the exact solution that may start with
different initial conditions, we then adopt the map
shadowing method here.

According to the shadowing theory \cite{Coomes1995}, the numerical
solution $(p_n,q_n)$ of an $r$th-order symplectic integrator is
defined as an $\widehat{\varepsilon}$-pseudo-trajectory of $H(p,q)$:
\begin{equation*}
\|(p_{n+1},q_{n+1})-\phi_H^{h}(p_n,q_n)\|\leq \widehat{\varepsilon},
\end{equation*}
where $\phi_H^{h}$ is the phase flow of $H(p,q)$ and
$\widehat{\varepsilon}=\mathcal{O}(h^{r+1})$. The result in
\cite{Coomes1995} shows that under technical conditions there exist
an exact trajectory $\big(\widetilde{p}(t),\widetilde{q}(t)\big)$ in
$H(p,q)$ starting with $(\widetilde{p}_0,\widetilde{q}_0)$ and a
positive number $\hat\delta$ such that
\begin{equation}\label{shadow-est}
\|(p_n,q_n)-(\widetilde{p}_n,\widetilde{q}_n)\|\leq \hat\delta,
\end{equation}
where $(\widetilde{p}_n,\widetilde{q}_n)=\big(\widetilde{p}(t_n),
\widetilde{q}(t_n)\big)$, $t_n = nh$ for
$n=0,1,\ldots,N_T$, and $\hat\delta$ is the shadowing
distance.

Suppose that $(p_0,q_0)$ is in the ergodic region and corresponds
 to a chaotic trajectory $\big(p(t),q(t)\big)$, then the trajectory
corresponding to $(p_0,p_0,q_0,q_0)$ in the extended Hamiltonian
$\Gamma(p,x,q,y)$ or $\overline{\Gamma}(p,x,q,y)$ must be located in
the ergodic region and exponentially sensitive to initial perturbation
as well. As we have just analyzed, the application of the
single-factor or double-factor explicit symplectic integrator leads
to the estimation \eqref{shadow-est}. Since both the exact
trajectory $\big(p(t),q(t)\big)$ and the shadowing trajectory
$\big(\widetilde{p}(t),\widetilde{q}(t)\big)$ obey the same
differential equations, i.e., the canonical equations of the
Hamiltonian $H(p,q)$ but with different initial values $(p_0,q_0)$
and $(\widetilde{p}_0,\widetilde{q}_0)$, it follows from the
chaoticity of the trajectory $(p(t),q(t))$ and $\|(p_0,q_0)
-(\widetilde{p}_0,\widetilde{q}_0)\|\leq\hat\delta$ that the
discrepancy
$\big(\widetilde{p}(t),\widetilde{q}(t)\big)-\big(p(t),q(t)\big)$
grows exponentially with time as follows:
\begin{equation}\label{shadow-est1}
\big\|\big(\widetilde{p}(t),\widetilde{q}(t)\big)-\big(p(t),q(t)\big)\big\|=\mathcal{O}(e^{t\sigma}\hat\delta).
\end{equation}
A comprehensive consideration
of \eqref{shadow-est} and \eqref{shadow-est1}yields the following estimation
\begin{equation*}
\big\|(p_n,q_n)-\big(p(t),q(t)\big)\big\|=\mathcal{O}(e^{t\sigma}\hat\delta + \hat\delta ).
\end{equation*}

Usually, the shadowing distance is depending on $n$ and has the form
$\hat\delta=\mathcal{O}(\widehat{L}\widehat{\varepsilon})$, where
the ``magnification factor'' $\widehat{L}$ may be in a large
magnitude. It is noted that an accurate estimation of $\widehat{L}$
has not yet been obtained in the literature. As suggested by the
numerical experiments in the literature \cite{Chow1992} that
$\widehat{L}\propto t/h= n$, a possible estimation to the global
error is guessed of the form
\begin{equation}\label{shadow-est3}
\big\|(p_n,q_n)-\big(p(t),q(t)\big)\big\|=\mathcal{O}(te^{t\sigma}h^{r}),
\end{equation}
for $t\leq T_{Lyapunov}$, where $T_{Lyapunov}$ is the predictable
time (or the Lyapunov time) inversely proportional to the maximum Lyapunov
exponent $\sigma$. If $t>T_{Lyapunov}$, i.e., the time goes beyond
the predictable time, the rule of the global error growth cannot be
determined because of the unpredictability of chaotic motions in a
long-term scale, even though the duration of the shadowing could be
much larger than the Lyapunov time.

We finally emphasize that although symplectic integrators will
behave like generic methods that present an exponential error growth
for chaotic trajectories, one can also get benefit from the use of
symplectic integrators. As reported by Luo et al.
\cite{Luo2021,Luo2023} for the nonseparable PN Hamiltonian system
of compact objects, the application of the  symplectic
extended phase space symplectic integrator to
chaotic trajectories may lead to a blow-up phenomenon such that the
energy errors grow suddenly and dramatically to interrupt the
computation. Due to the existence of additional first integrals
except for the energy, the authors suggested that the
revised methods by projection onto the submanifold defined by such
integrals could partially improve the performance. As analyzed in
Section~\ref{sec:stand-proj}, even though the complete degree of freedom of
the weight vector $\lambda$ makes the revised methods still
symplectic in both the extended phase space and the original phase
space, the existence of globally well-defined modified Hamiltonian
of the revised symplectic method becomes uncertain (we consider the
existence does not hold because the weight vector will be different
for each time step). Another key issue is that the improvement of
the revised methods on the error performance is at the expense of
solving of implicit equations to determine the weight vector
$\lambda$. Moreover, we consider that a small stepsize for the
single-factor explicit symplectic integrator could avoid the blow-up
phenomenon. Hence, there is a tradeoff between the proposed
single-factor or double-factor explicit symplectic integrator and
the revised multi-projection integrators in the numerical simulation
of chaotic trajectories.

\section{Numerical experiments}\label{sec:num}

This section describes results of applying our explicit
simplectic integrators described in Section \ref{sec:existence} and
Section \ref{sec:symp-projection} to several test problems. We test
our single-factor or double-factor explicit symplectic
integrator compared with the existing implicit symplectic
integrators. We consider two nonseparable Hamiltonian
problems, including a completely integrable Hamiltonian system and
the nonintegrable PN Hamiltonian system of the spinning compact
binaries. To present a detailed performance comparison, we select
the following symplectic integrators:
\begin{itemize}
  \item IRK2: the second-order implicit midpoint method;
  \item SemiSymp2: the second-order semiexplicit method proposed in \cite{Jayawardana2023};
  \item ExpSymp2:  the second-order double-factor explicit symplectic integrator
  with $(\lambda_0,\mu_0)=(\frac{1}{e},\frac{1}{\pi})$ proposed in this paper;
  \item  IRK4: the fourth-order symplectic Legendre--Gauss collocation method;
  \item SemiSymp4: the fourth-order semiexplicit method proposed in  \cite{Jayawardana2023};
  \item ExpSymp4:   the fourth-order double-factor explicit symplectic integrator
  with $(\lambda_0,\mu_0)=(\frac{1}{e},\frac{1}{\pi})$ proposed in this paper.
\end{itemize}
For the implicit methods, the error tolerance of the
iterative solution is given by $\varepsilon_{tol}=10^{-13}$ unless
otherwise specified. For the two implicit RK methods IRK2 and IRK4,
the fixed-point iteration is used as recommended in
\cite{Hairer2006}, while the simplified Newton iteration is used for
the two semiexplicit methods as discussed in \cite{Jayawardana2023}.

Throughout the numerical experiments, the two-norm is used to measure
the global error ($GE_n$) at time $t_n=nh $,
which is defined by the difference between
the numerical solutions and the exact or reference solutions, i.e.,
\begin{equation*}
GE_n =\big \|(p_n,q_n)-\big(p(t_n),q(t_n)\big)\big\|.
\end{equation*}
The global error of Hamiltonian energy is defined by
\begin{equation*}
GHE_n = |H(p_n,q_n) - H(p_0,q_0)|.
\end{equation*}
The global (energy) error GE (or GHE) over the integration time interval $[0,T_{end}]$ is set
to the maximum of $GE_n$, i.e.,
\begin{equation*}
GE=\max_{1\leq n\leq N_T}GE_{n},
\quad
GHE=\max_{1\leq n\leq N_T}GHE_{n}.
\end{equation*}
We carry out our numerical experiments using
MATLAB R2016a on Lenovo desktop Qitian M437 with 3.10
GHz CPU i5-10500.

\begin{problem}\label{problem-1}
We first consider the simple case of 1 degree of freedom system with
$H(p,q)=\frac{1}{2}(1+p^2)(1+q^2)$ and the initial condition
$\big(p(0),q(0)\big)=(0,-3)$, which is completely integrable
but nonseparable. This Hamiltonian system is solvable and considered
in \cite{Tao2016b} and \cite{Jayawardana2023}. The exact solution can be expressed
by Jacobian elliptic function.
\end{problem}

Figure~\ref{Fig1}(a) plots the exact solution of this
problem, which shows the integrability of the system. However, the
extended Hamiltonian $\Gamma(p,x,q,y)=\frac{1}{2}(1+p^2)(1+y^2)
+\frac{1}{2}(1+x^2)(1+q^2)$ is no longer integrable as demonstrated
by Figure~\ref{Fig1}(b), where 41 trajectories on the constant
energy surface $\Gamma=10$ are plotted. It can be observed from
Figure~\ref{Fig1}(b) that there exist two notable islands where the
orbit is regular (periodic or quasi-periodic), while there also
exists an ergodic sea in which the trajectory is chaotic. The
numerical performance of Pihajoki's original second-order extended
phase space integrator \eqref{Pihajoki-leapfrog} is displayed in
Figure~\ref{Fig1}(c), where the exponential growth of both the
global error and the discrepancy $\|(p_n,q_n)-(x_n,y_n)\|$ of the
numerical method are clearly observed.

The global errors and energy errors of the six tested symplectic
integrators with the stepsize $h=0.01$ and $T_{end}=1000$ are shown in Figure~\ref{Fig2}.
The linear growth of global errors and the uniform bound of energy errors
in this figure clearly demonstrate the theoretical result in
Theorem~\ref{theorem8}. In addition to the comparable accuracy of the same order
methods shown in Figure~\ref{Fig2}, the remarkable difference is their
computational efficiency. The CPU time are
4.7731, 15.4917, 0.1397, 14.3253, 8.7199, and 0.1507 seconds respectively
for IRK2, SemiSymp2, ExpSymp2, IRK4, SemiSymp4, and ExpSymp4. This soundly
supports the much higher efficiency of the explicit symplectic integrators
proposed in this paper.

\begin{figure*}[htb]
\center{
\subfigure[Exact solutions]
{\includegraphics[scale=0.36]{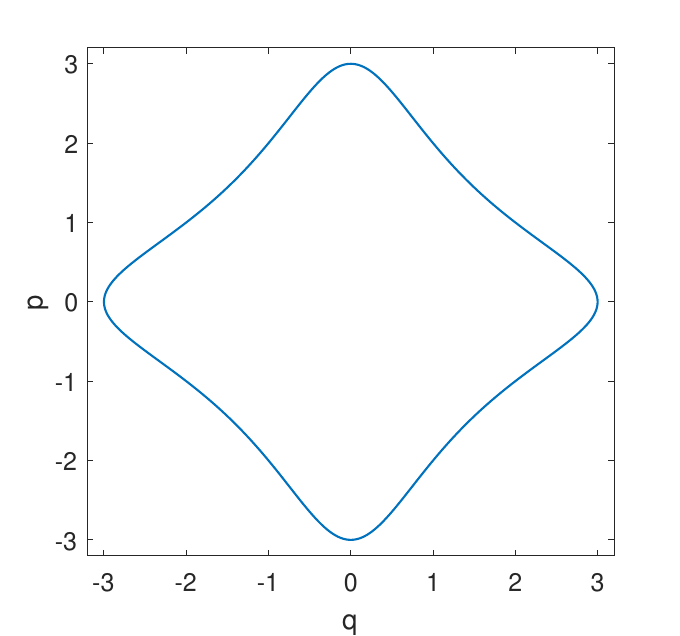}}
\subfigure[Poincar\'{e} sections]
{\includegraphics[scale=0.36]{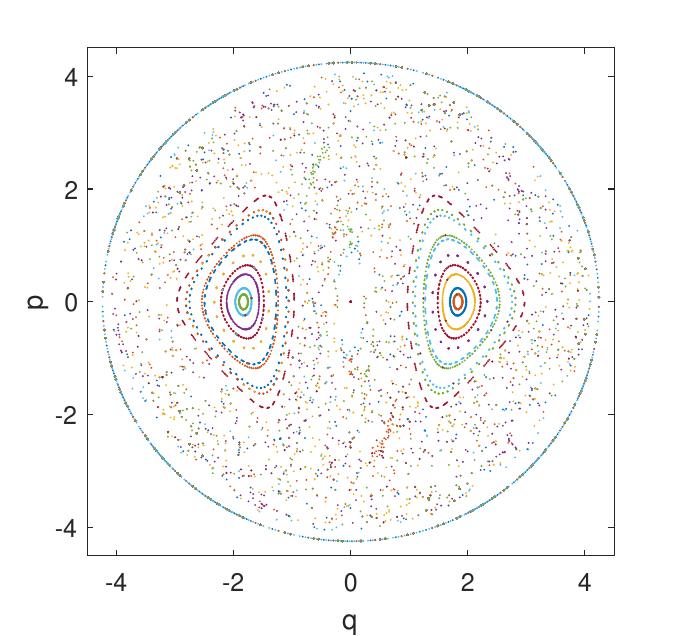}}
\subfigure[Error growth]
{\includegraphics[scale=0.36]{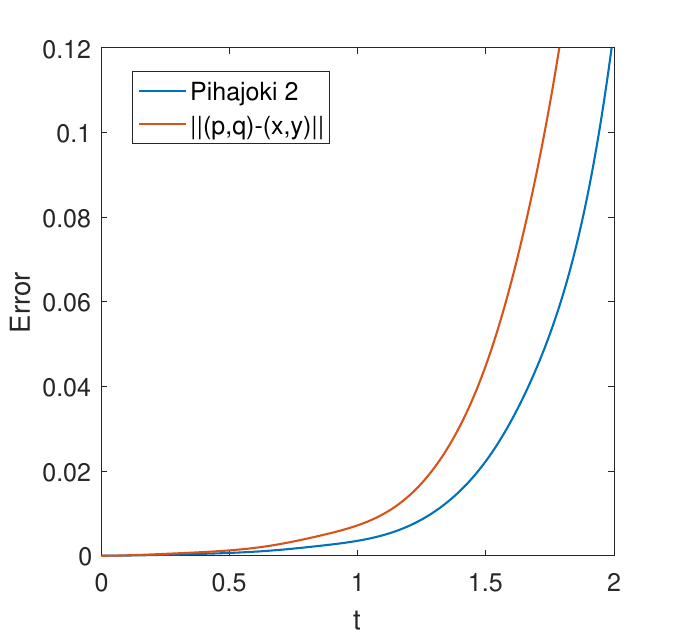}}
\caption{The exact solutions of the original Hamiltonian, the
Poincar\'{e} sections at the surface $x=0$ of the extended Hamiltonian,
and the exponential error growth of the 2nd-order Pihajoki's
original extended phase space integrator for Problem~\ref{problem-1}.}
\label{Fig1}}
\end{figure*}

\begin{figure*}[htb]
\centering{
\includegraphics[scale=0.50]{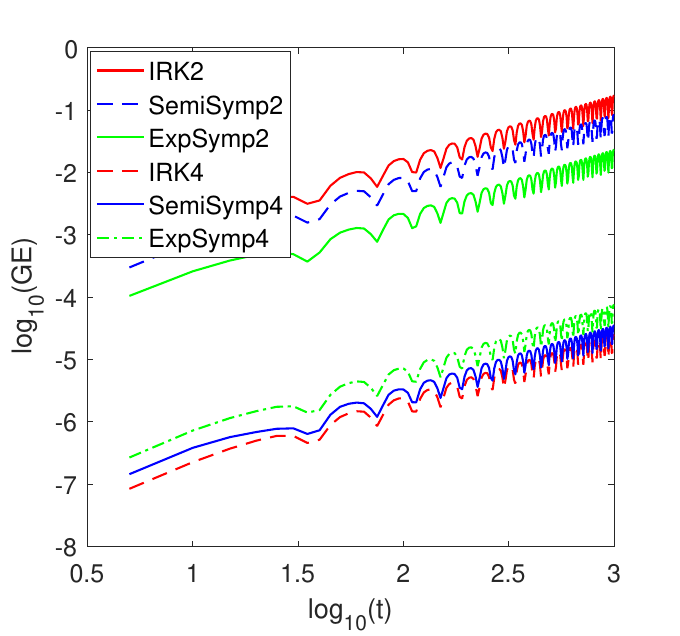}
\includegraphics[scale=0.50]{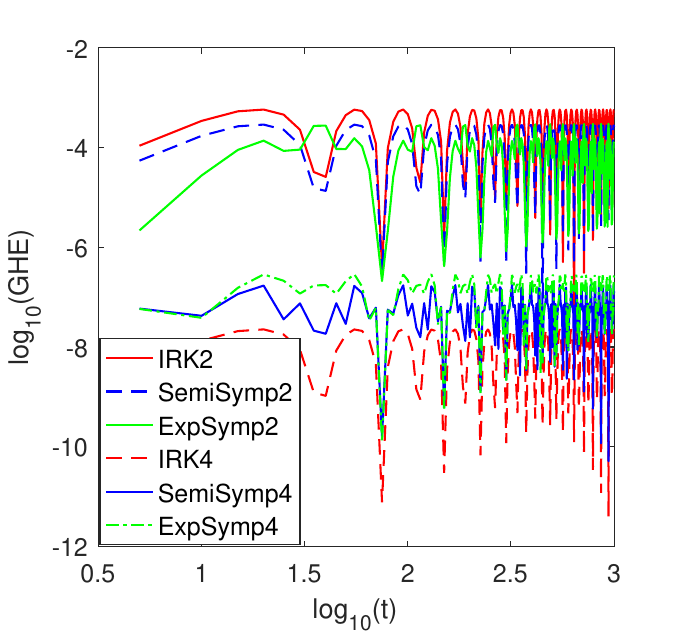}
}
\caption{\label{Fig2}The global errors and
energy errors with $h=0.01$.}
\end{figure*}

\begin{figure*}[htb]
\centering{
\includegraphics[scale=0.50]{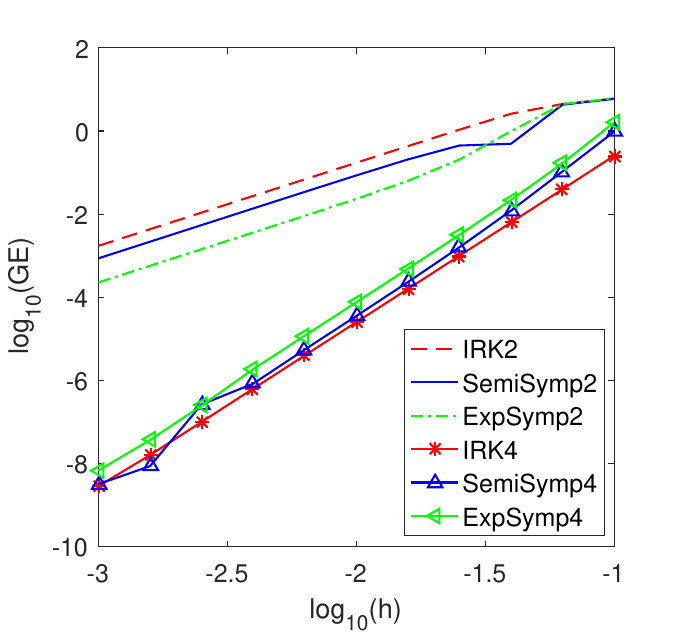}
\includegraphics[scale=0.50]{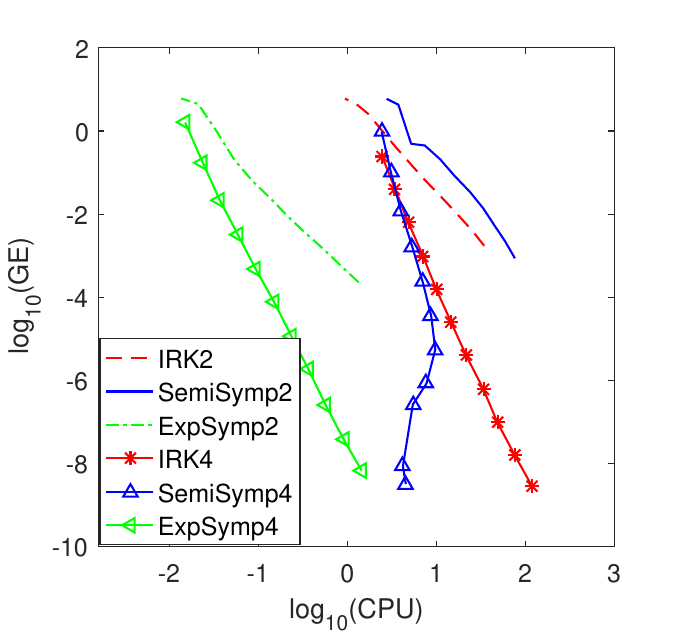}
}
\caption{\label{Fig3}The convergence orders and the
efficiency curves.}
\end{figure*}

We further show the convergence orders and efficiency curves of the
tested symplectic integrators with $T_{end}=1000$ in
Figure~\ref{Fig3}. It is observed that ExpSymp2 has the same second
order as IRK2 and SemiSymp2, while ExpSymp4 has the same fourth
order as IRK4 and SemiSymp4. The right panel clearly shows the
higher efficiency of the proposed explicit symplectic integrators,
i.e., ExpSymp2 and ExpSymp4, compared with the same order implicit
methods. Besides, a noticeable phenomenon is that the CPU time (in
seconds) consumed by the fourth-order method SemiSymp4 does not
always increase with the decrease of the stepsize, which is
different from the other implicit methods IRK2, SemiSymp2, and IRK4.
This phenomenon is also consistent with the result in
\cite{Jayawardana2023} that the smaller stepsize indicates the
smaller number of iterations in a single step. As claimed in
\cite{Jayawardana2023}, this phenomenon is attributed to that the
difference $\|(p_{n+1}^{(1)},q_{n+1}^{(1)})-
(p_{n+1}^{(1)},q_{n+1}^{(1)})\|$ of SemiSymp4 is proportional to
$\mathcal{O}(h^5)$. In particular, once the tolerance of the
iteration satisfies $\varepsilon_{tol}>\mathcal{O}(h^5)$, SemiSymp4 just needs
the least one iteration in a single step. In fact, this phenomenon
can also occur for SemiSymp2 just by setting a large
$\varepsilon_{tol}$ and a small $h$ such that
$\varepsilon_{tol}>\mathcal{O}(h^3)$. However, it is
important to emphasize two points  here. Firstly, the $r$th-order
semiexplicit symplectic integrator cannot be more efficient than the
same order explicit symplectic integrator proposed in this paper,
because the latter needs ``one iteration'' without regard to the
values of $h$ and $\varepsilon_{tol}$, while the number of
iterations for the former in a single step is certainly not less
than 1 and is exactly 1 merely for the special case
$\varepsilon_{tol}>\mathcal{O}(h^{r+1})$. Secondly, a large
$\varepsilon_{tol}$ not only destroys the symplecticity of the
semiexplicit integrator but also introduces a long-term tolerance
error for the method.

As analyzed in Section~\ref{sec:modifiedH}, the two factors
$\lambda_0$ and $\mu_0$ of the double-factor explicit symplectic
integrator will  reduce to the single-factor integrator once
$\lambda_0=\mu_0$. Here, we further explore the dependence of the
fourth-order double-factor explicit symplectic integrator ExpSymp4
on the factors $\lambda_0$ and $\mu_0$ by testing the following sets
of values for $(\lambda_0,\mu_0)$ with $h=0.01$:
$(\frac{1}{e},\frac{1}{\pi})$, $(\frac{1}{2},\frac{1}{2})$,
$(\frac{1}{3},\frac{3}{4})$, and the random parameter $rand(2)
=(0.2752,0.0731)$ generated by MATLAB. It is shown in
Figure~\ref{Fig4} that despite the
small difference within about half a magnitude among the four cases,
all the numerical results indicate a linear global error growth and
a uniformly bounded energy error.

Theorem~\ref{theorem8} only supports the linear growth of global
errors of the double-factor or single-factor explicit symplectic
integrator which is a special case of the standard projection
integrator in Definition~\ref{definition2}. As we analyzed, although
the standard projection integrator is symplectic in a global sense
regardless of the value of the weight vector $\lambda$, the linear
growth of global errors does not hold for general choices of
$\lambda$ once the globally well-defined modified Hamiltonian does
not exist. To illustrate this point, we finally test three different
choices for the fourth-order explicit symplectic integrator ExpSymp4
with $h=0.01$: choice 1--using a random parameter
$rand(2)=(0.6657,0.4910)$ and its reverse respectively for odd and
even steps (the double-factor explicit symplectic integrator with
a constant weight vector
$(\lambda_0,\mu_0)=(0.6657,0.4910)$), choice 2--using the same
parameter $(0.6657,0.4910)$ for all the steps (a standard projection
symplectic integrator with a constant weight vector), and choice
3--using different parameters $rand(2)$ randomly generated by MATLAB
and the reverse respectively for odd and even steps (the double-factor
explicit symplectic integrator with varying weight vectors
$(\lambda_0,\mu_0)$). It
can be observed from Figure~\ref{Fig5} that choice 1 performs like
classical symplectic methods that give a linear growth of global
errors and a uniform bound of energy errors. However, for the case
of choice 2, although the method is still globally symplectic, it
behaves like the traditional nonsymplectic method that possesses a
quadratic growth of global errors and a linear growth of energy
errors. The third case of choice 3 performs strangely as it does not
indicate an obvious power law growth of errors, and the errors are
generally between choice 1 and choice 2.

\begin{figure*}[htb]
\centering{
\includegraphics[scale=0.5]{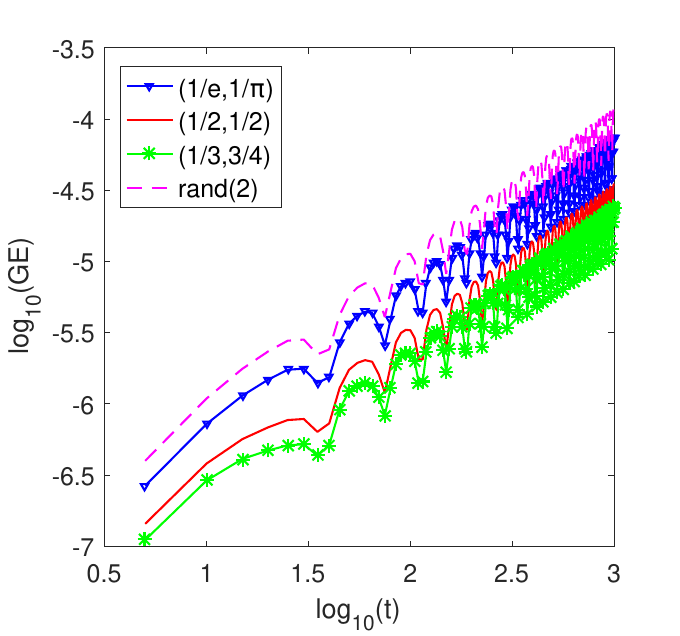}
\includegraphics[scale=0.5]{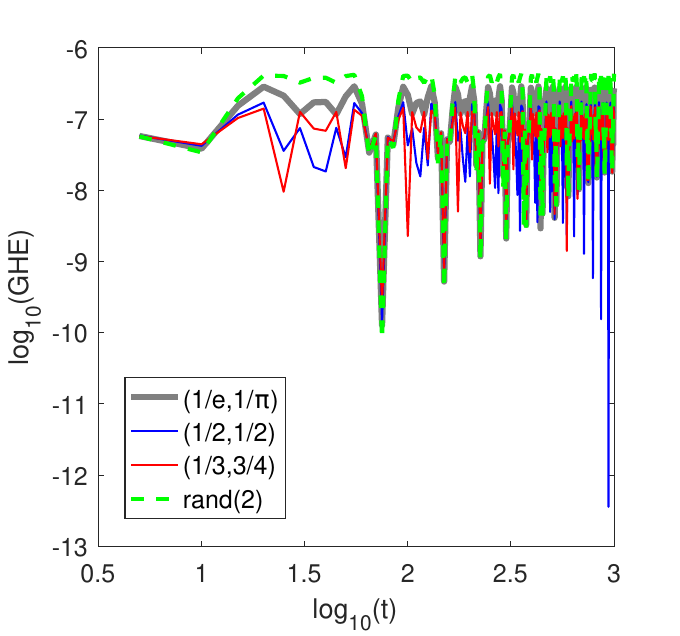}
}
\caption{\label{Fig4}Dependence of the fourth-order integrator
ExpSymp4 on the factors $(\lambda_0,\mu_0)$.}
\end{figure*}

\begin{figure*}[htb]
\centering{
\includegraphics[scale=0.5]{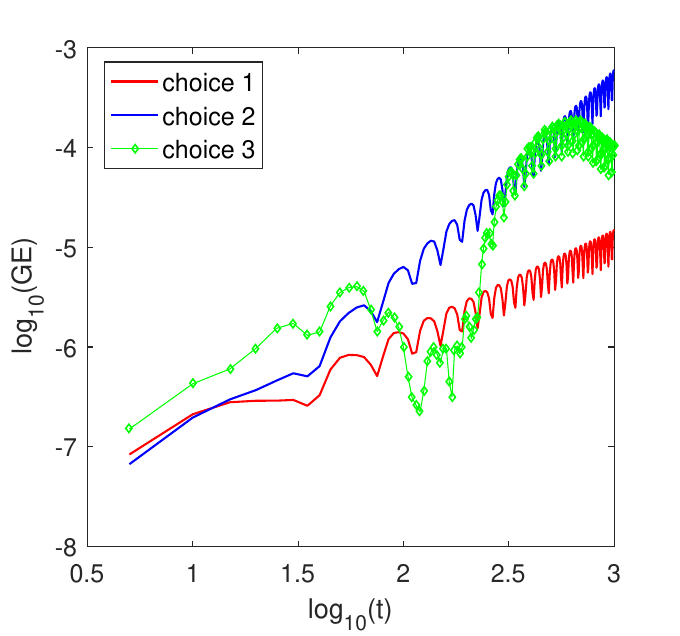}
\includegraphics[scale=0.5]{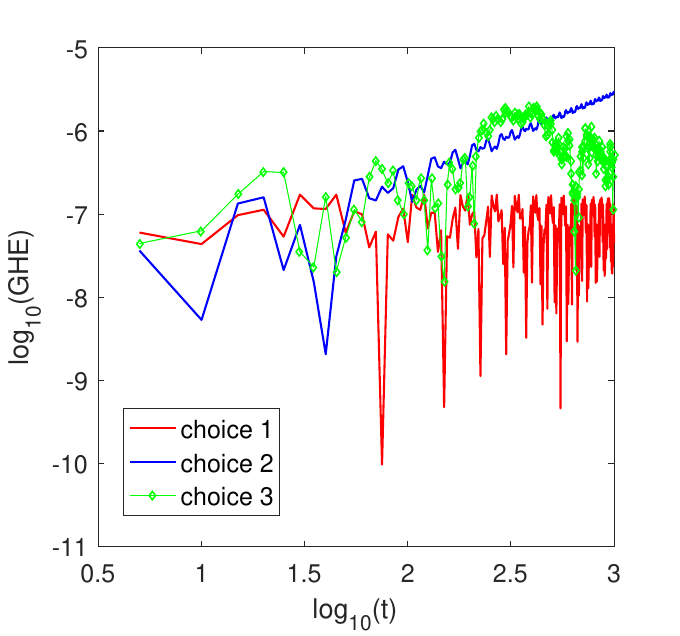}
}
\caption{\label{Fig5}The performance of different strategies
for the symplectic integrator.}
\end{figure*}

\begin{problem}\label{problem-2}
We now consider the post-Newtonian (PN) Hamiltonian system of the
spinning compact binaries \cite{Mei2018,Luo2023}, which is accurate up to 2PN order as follows:
\begin{equation}\label{formulation}
H(\bm{Q},\bm{P},\bm{S}_1,\bm{S}_2) =
H_{N} + \frac{1}{c^2}H_{1PN} + \frac{1}{c^4}H_{2PN}
+ \frac{1}{c^3}H_{1.5PN}^{SO}
+ \frac{1}{c^4}H_{2PN}^{SS},
\end{equation}
where $H_{N}$, $H_{1PN}$, and $H_{2PN}$ are respectively the Newtonian
term (i.e., the Kepler flow), the 1PN-, and 2PN-order orbital contributions to
the two compact bodies. The spin-orbit coupling, i.e., $H_{1.5PN}^{SO}$, is
accurate up to 1.5PN order by adopting the assumption that the rotational
speed of the compact object is in the same magnitude as $c$.
$H_{SS}$ is the spin-spin couplings accurate up to 2PN order.

In the formulation, $\bm{P}$ is the momenta of body 1 relative to
the center, $\bm{Q}$ is the position coordinates of body 1 relative
to body 2, $\bm{N}=\bm{Q}/r$ is the unit vector,
$r=|\bm{Q}|=\sqrt{q_1^2+q_2^2+q_3^2}$ is the distance of body 1
relative to body 2, and $\bm{S}_i~(i=1,2)$ are the spins of the two
compact bodies. The mass of the two compact bodies are respectively
denoted by $m_1$ and $m_2~(m_1\leq m_2)$. To describe the
formulation in detail, we additionally introduce the total mass
$M=m_1+m_2$, the reduced mass $\mu=m_{1}m_{2}/M$, the mass ratio
$\beta=m_1/m_2$, and the frequently used parameter
$\eta=\mu/M=\beta/(1+\beta)^{2}$. It is also noted that $\bm{S}_i$
for $i=1,2$ are usually expressed by
$\bm{S}_i=\Lambda_i\hat{\bm{S}}_i$, where $\hat{\bm{S}}_i$ are the
unit vectors,  and $\Lambda_i=\chi_{i}m_{i}^{2}/M^{2}$ ($\chi_i\in
[0, 1]$) are spin magnitudes. The time $t$, space $\bm{Q}$, momentum
$\bm{P}$, and spin $\bm{S}_i$ are respectively rescaled and measured
in $GM$, $GM$, $\mu$, and $M^2$ \cite{Huang2019}.

With the above notations, the detailed expressions are as follows:
\begin{eqnarray*}
&H_{N}&=\frac{\bm{P}^2}{2}-\frac{1}{r},
\\
  &H_{1PN}&=\frac{1}{8}(3\eta-1)\bm{P}^4-\frac{1}{2}\big[(3+\eta)\bm{P}^2
+\eta(\bm{N}\cdot\bm{P})^2\big]\frac{1}{r}
+\frac{1}{2r^{2}},
\\
&H_{2PN}&=
\frac{1}{16}(1-5\eta+5\eta^2)\bm{P}^6
+\frac{1}{8}\big[(5-20\eta-3\eta^2)\bm{P}^4
 -2\eta^2{(\bm{N}\cdot\bm{P})^2}\bm{P}^2
\\
&&-3\eta^2{(\bm{N}\cdot\bm{P})^4}\big]\frac{1}{r}
+\frac{1}{2}\big[(5+8\eta)\bm{P}^2
+3\eta (\bm{N}\cdot\bm{P})^2\big]\frac{1}{r^2}
-\frac{1}{4}(1+3\eta)\frac{1}{r^3},
 \\
& H_{1.5PN}^{SO} &= \frac{1}{r^{3}} \big(2\bm{S}
+\frac{3}{2}\bm{S}^{*}\big)\cdot\bm{L},
\end{eqnarray*}
and
\begin{equation*}
H_{2PN}^{SS}(\bm{Q},\bm{S}_1,\bm{S}_2)=\frac{1}{2r^3}
\big[3(\bm{S}_0\cdot\bm{N})^2-\bm{S}_0^2\big],
\end{equation*}
where $\bm{S}= \bm{S}_1 + \bm{S}_2$,
$\bm{S}^{*}=\frac{1}{\beta}\bm{S}_1+\beta\bm{S}_2$,
$\bm{L}$ is the orbital angular momentum vector
$\bm{L}=\bm{Q} \times \bm{P}$, and
\begin{equation*}
\bm{S}_{0} = \bm{S} + \bm{S}^{*}
= (1+\frac{1}{\beta})\bm{S}_1 +(1+\beta)\bm{S}_2.
\end{equation*}

With the canonical conjugate spin variables
$\bm{\theta}=(\theta_1,\theta_2)$ and $\bm{\xi}=(\xi_1,\xi_2)$ and
the transformation
\begin{equation*}\label{spin-new-v}
\bm{S}_i=\left(
\begin{aligned}
&\rho_{i}\cos(\theta_{i})
\\
&\rho_{i}\sin(\theta_{i})
\\
&\quad \xi_{i}
\end{aligned}
\right),\quad i=1,2,
\end{equation*}
where $\rho_{i}^2+\xi_{i}^2=\Lambda_i^2$ and $\rho_{i}>0$,
the original 12-dimensional system
$H(\bm{Q},\bm{P},\bm{S}_1,\bm{S}_2)$ of \eqref{formulation}
reduces to a 10-dimensional canonical Hamiltonian system
$H(\bm{Q},\bm{P},\bm{\theta},\bm{\xi})$ \cite{Wu2010},
whose evolution equations read
\begin{equation*}\label{eq:evolution-canonical}
\begin{aligned}
&\frac{\mathrm{d}\bm{Q}}{\mathrm{d}t}=\frac{\partial {H}}{\partial \bm{P}},
\qquad \frac{\mathrm{d}\bm{P}}{\mathrm{d}t}=-\frac{\partial {H}}{\partial \bm{Q}},
\\
& \frac{\mathrm{d}\bm{\theta}}{\mathrm{d}t}=\frac{\partial {H}}{\partial\bm{\xi}},
\qquad \frac{\mathrm{d}\bm{\xi}}{\mathrm{d}t}=-\frac{\partial {H}}{\partial\bm{\theta}}.
\end{aligned}
\end{equation*}

For the 10-dimensional canonical Hamiltonian system
$H(\bm{Q},\bm{P},\bm{\theta},\bm{\xi})$, there exist only four
independent first integrals, i.e., the total angular momentum vector
\begin{equation*}\label{angular}
\bm{J}=\bm{L}+\bm{S}_1+\bm{S}_2,
\end{equation*}
and the total energy
\begin{equation*}\label{energy}
E={H}(\bm{Q},\bm{P},\bm{\theta},\bm{\xi}).
\end{equation*}
This means that the spinning Hamiltonian \eqref{formulation}
is nonintegrable, and hence chaos may occur in this system.
In addition, the Hamiltonian ${H}(\bm{Q},\bm{P},\bm{\theta},\bm{\xi})$
is certainly nonseparable.
\end{problem}

For this problem, the following two trajectories with
different initial values and parameters are tested: trajectory 1
with {$\bm{Q}(0)=(25.34,0,0)$, $\bm{P}(0)=(0,0.18,0)$},
$(\Lambda_1,\Lambda_2)=(0.0479,0.6104)$,
$\bm{\theta}(0)=(1.2490,0.6202)$, $\bm{\xi}(0)=(0.0445,0.6104)$,
$\beta=0.28$, $c=10^{1/2}$ and trajectory 2 with
{$\bm{Q}(0)=(8.31,0,0)$, $\bm{P}(0)=(0,0.50,0)$},
$(\Lambda_1,\Lambda_2)=(0.25,0.25)$,
$\bm{\theta}(0)=(0.7587,0.8469)$, $\bm{\xi}(0)=(-0.2459,-0.2459)$,
$\beta=1$, $c=1$.

We first numerically integrate the two trajectories using the
8th-order Legendre--Gauss symplectic method with $h=1$ and
$T_{end}=10^5$. The maximum Lyapunov exponents shown in the left
panel of Figure~\ref{Fig6} indicate the regularity of trajectory 1
and the chaoticity of trajectory 2, as the former displays a linear
decrease while the latter seems to tend to a stable value of about
$10^{-3}$. The right panel of Figure~\ref{Fig6} displays the global
energy errors of the two different trajectories, from which it can
be seen that there only exists an accumulation of roundoff error for
the regular trajectory, while the energy error of the chaotic
trajectory suddenly increases at some moments. This means that the
regularity or chaoticity of the trajectory probably introduces a
significant influence on the performance of symplectic integrators.

\begin{figure}[hptb]
\center{
\subfigure[Lyapunov exponents]
{\includegraphics[scale=0.50]{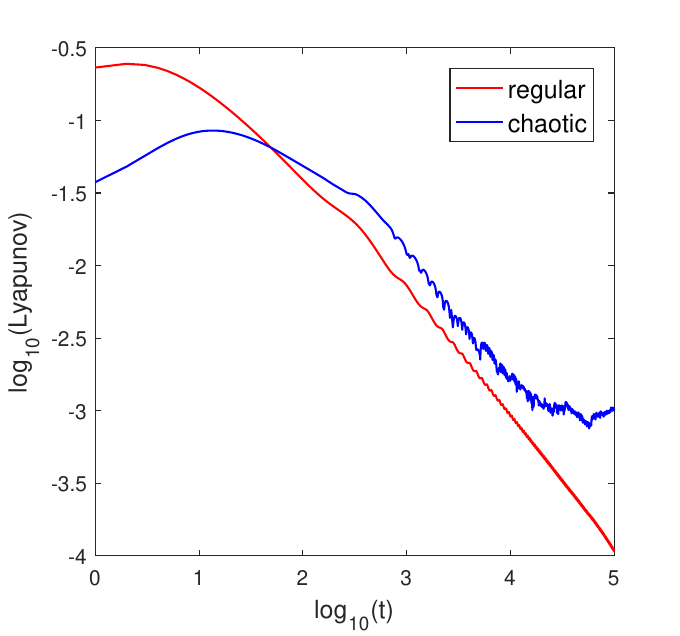}}
\subfigure[Global energy errors]
{\includegraphics[scale=0.50]{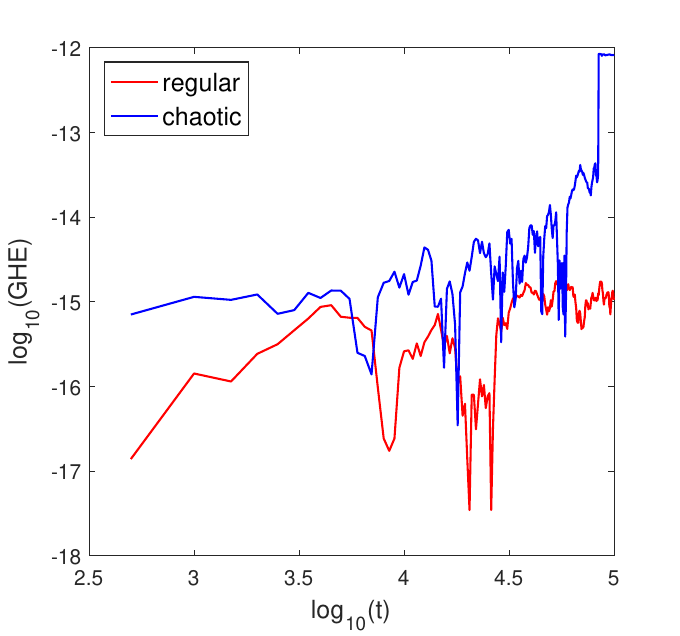}}
\caption{Lyapunov exponents of two different  trajectories and
the global energy errors of the reference solutions for Problem~\ref{problem-2}.}
\label{Fig6}}
\end{figure}

Although the reliability of numerical solutions for the chaotic trajectory
2 is a subtle issue, we still regard the numerical solutions obtained from the
8th-order Legendre--Gauss symplectic method as the reference solutions due to the
high accuracy. Then, the global errors and energy errors of the
six tested symplectic integrators with $h=1$ are shown in Figure~\ref{Fig7}
and Figure~\ref{Fig8} respectively for the regular trajectory and the chaotic
trajectory. It can be seen from Figure~\ref{Fig7} that all the symplectic
integrators display a linear growth of global errors and a uniform bound of
energy errors, which support the theoretical analysis made in Theorem~\ref{theorem8}.

However, the case of the chaotic trajectory is complicated. As the
maximum Lyapunov is about $10^{-3}$, the predictable time (or
Lyapunov time) of the chaotic trajectory is inversely proportional to $10^{-3}$,
i.e., about $10^3$. It is observed from Figure~\ref{Fig8} that the
global errors of all the six tested symplectic integrators give rise
to an exponential growth and the energy errors seem to
have a quadratic growth once $t<10^{2.3}$. However, for the case of
$t>10^{2.3}$, both the global errors and the energy errors do not
exhibit a clear growth rule but increase in general. This result
strongly supports our analysis in Section~\ref{sec:ergodic} as the
inflection time $10^{2.3}$ in Figure~\ref{Fig8} is close to the
Lyapunov time $10^{3}$. Moreover, the reference function displayed
in the left panel of Figure~\ref{Fig8} gives a strong support for
the global error estimation \eqref{shadow-est3}.

The CPU time (in seconds) consumed by each method is presented in
Table~\ref{Tab:one}. We first claim that the error tolerance
of iterations for the
chaotic trajectory is set to $\varepsilon^*_{tol}=4\times10^{-13}$,
which is three times larger than the originally used setting
$\varepsilon_{tol}$ because the semiexplicit methods do not converge
under the latter setting. It is very clear from
Table~\ref{Tab:one} that the explicit symplectic methods have much
higher efficiency than the same order implicit symplectic methods.
In addition, the semiexplict methods consume more time for the
chaotic trajectory than for the regular trajectory, while the
explicit methods and the fully implicit methods are hardly affected
by the regularity or the chaoticity of the trajectory.

\begin{figure*}[htb]
\centering{
\includegraphics[scale=0.5]{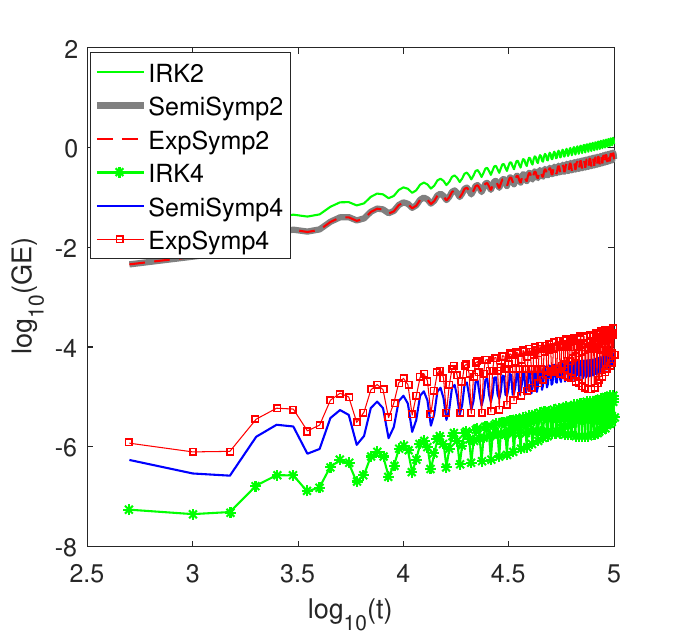}
\includegraphics[scale=0.5]{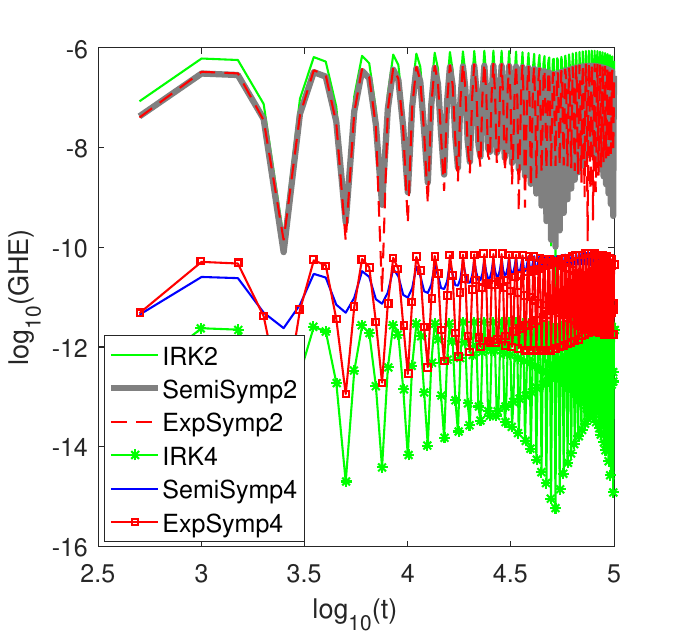}
}
\caption{\label{Fig7}The global errors and
global energy errors of the regular trajectory.}
\end{figure*}

\begin{figure*}[htb]
\centering{
\includegraphics[scale=0.5]{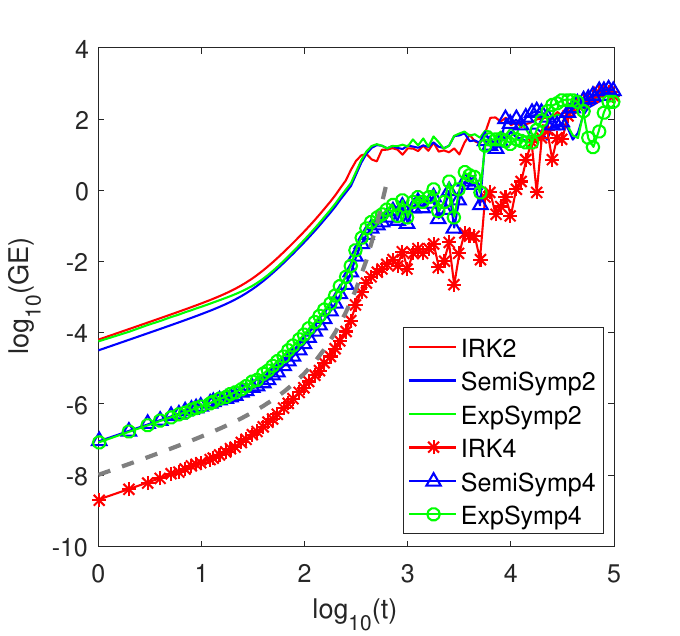}
\includegraphics[scale=0.5]{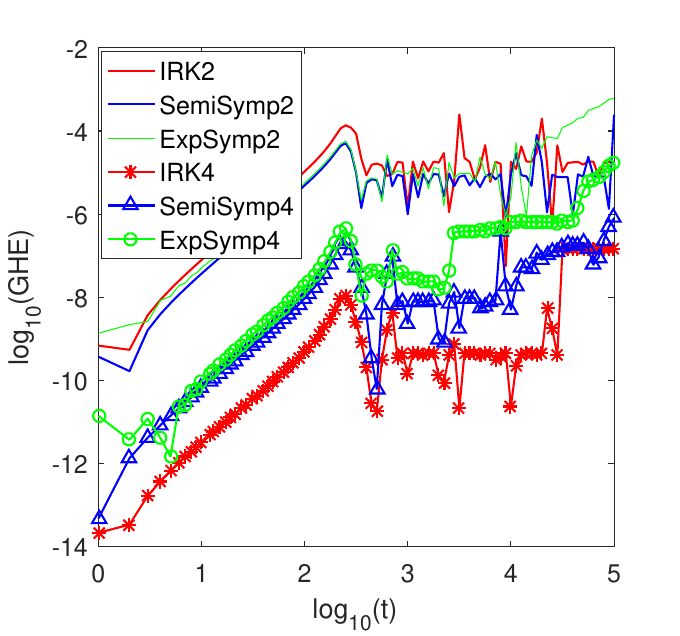}
} \caption{\label{Fig8}The global errors and global energy errors of
the chaotic trajectory. The gray dashed line in the left panel
denotes a reference function of $t$ as $t\times
e^{3.98\times10^{-2.3}t}\times 10^{-8}$.}
\end{figure*}

\begin{table}[htb]
\renewcommand{\arraystretch}{1.5}
\centering
\caption{CPU time (in seconds) of each method for the two different
trajectories with $T _{end}= 10^5$.}
\begin{tabular}{c|c|c|c|c|c|c}
  \hline
   & IRK2 & SemiSymp2  & ExpSymp2 & IRK4 & SemiSymp4  & ExpSymp4
   \\
   \hline
   regular & 45.4903  & 557.1124 & 19.2260 & 111.2681  & 365.5951 & 42.4972
   \\
    \hline
   chaotic & 48.5698 & 684.1673 & 18.9419 & 113.7709 & 406.3585 & 41.5335
   \\
  \hline
\end{tabular}
 \label{Tab:one}
\end{table}

\section{Conclusions}
The idea that discretization systems should preserve as many as
possible  the mathematical or physical structures of the original
continuous systems has been widely acknowledged and the research on
structure-preserving integrators becomes very active in the field of
numerical analysis and scientific and engineering computing.
Symplectic methods have been extensively used in many
areas of applied mathematics and natural sciences.  During the
development of structure-preserving integrators, the existence of
explicit symplectic integrators for general nonseparable Hamiltonian
systems is an open problem that had remained unsolved for decades.

This paper hinges upon three developments. Firstly, we
have proved the existence of explicit symplectic integrators for
general nonseparable Hamiltonian systems in Section
\ref{sec:existence}. Secondly, we have used the nonlinear symplectic
translation mapping to investigate explicit standard projection
symplectic integrators and  generalized projection symplectic
integrators in Section \ref{sec:symp-projection}. This has motivated
to show the existence of the global modified Hamiltonian
and the linear error growth for some
special explicit symplectic integrators  in Section~\ref{sec:error}.

In summary, we analyzed and derived several types of explicit
symplectic integrators by constructing linear and nonlinear explicit
symplectic transformations from the extended phase space to its
special submanifold. Moreover, by showing the existence of the
global modified Hamiltonian, we proved the linear growth of global
errors and the uniformly bounded error of first integrals for the
single-factor or double-factor explicit symplectic integrator when
applied to (near-) integrable Hamiltonian systems. We also presented
an estimation to the global error for the proposed explicit
integrators when applied respectively to regular and chaotic
trajectories of the nonintegrable Hamiltonian system. Finally, we
conducted the numerical experiments with a completely integrable
nonseparable Hamiltonian and a nonintegrable nonseparable
Hamiltonian, and the numerical results soundly demonstrated the
theoretical analysis made in this paper and supported the high
efficiency of the proposed explicit symplectic integrators
in comparison with the existing implicit symplectic
integrators in the literature.


\end{document}